\theoremstyle{definition}
\newtheorem{definition}{Definition}[section]
\newtheorem{example}[definition]{Example}
\theoremstyle{plain}
\newtheorem{proposition}[definition]{Proposition}
\newtheorem{lemma}[definition]{Lemma}
\theoremstyle{remark}
\newtheorem{remark}[definition]{Remark}
\newcommand{\R}{\mathbb{R}}
\newcommand{\N}{\mathbb{N}}
\newcommand{\EE}{\mathbb{E}}
\newcommand{\cov}{{\rm cov}}
\def\lnorm{\left\ldbrack}
\def\rnorm{\right\rdbrack}
\def\A{{\cal A}}
\def\B{{\cal B}}
\def\X{{\cal X}}
\def\rank{\text{rank}\,}
\def\RR{\R} % replace after acceptance
\def\NN{\N} % replace after acceptance
\def\Var{\text{Var}}
\def\Cov{\text{Cov}}
\def\PP{{\mathbb P}}
\def\basis{{\cal U}}
\def\1{{\bf 1}}
\long\def\notiz#1{{\bf #1}}
\def\notiz#1{}
\def \sign{{\rm sign\,}}
\def\GF{{\color{red}O}}
\def\GX{{\color{blue}U}}
\def\text#1{\mbox{\rm #1}}
\def\plusG{{\color{red} \dotplus}}
\def\dotsumG{\color{red}\sum\limits^{.}}
\def\plus{{\color{blue}\ddot+}}
\def\dettrafo{\GF_t}
\def\detsys{\GF_s}
\def\aopt{a_{\rm opt}}
\def\aopt{a_{\signXi}}
\def\rhoCanon{\rho_{\rm ca}}
\long\def\note#1{{\color{red}#1}}
\long\def\note#1{ }
\def\GF{G}
\def\GFsup{\GF^*}
\def\GX{H}
\def\plusG{\ddot+}
\def\dotsumG{\sum\limits^{.}}
\def\plus{\dotplus}
\def\Pois{{\rm Poiss}}
\def\signXi{\sign {\lnorm \cdot \rnorm}}
\def\assoc{{a}}
\begin{document}
\title{%An algebraic  approach to information theory
 An algebraic generalization of the entropy
  \\and its application to  statistics}
%  ,\\ in particular principal component analysis}
\author{Martin   Schlather\footnote{
    University of Mannheim, Institute of Mathematics, 68159 Mannheim, Germany. Email:
    martin.schlather@uni-mannheim.de}
 % \ and  Wolfgang zu Castell\footnote{Potsdam}
} 
\date{\today} 
\maketitle

\begin{abstract}
  We define a general notion of entropy in elementary,
  algebraic terms.
  Based on that, weak forms of a scalar product and a distance
  measure are derived.
  We give basic properties of these quantities,
  generalize the Cauchy-Schwarz inequality,
  and relate our approach to the theory of scoring rules.
  Many supporting examples illustrate our approach and give new
  perspectives on established notions, such as the
  likelihood, the Kullback-Leibler divergence, the uncorrelatedness of
  random variables, the scalar product itself, the Tichonov regularization
  and the mutual information.
 \end{abstract}

{\small
	\noindent 
	\textit{Keywords}: Cauchy-Schwarz inequality,
        divergence,
        entropy,
        Pythagorean theorem,
        risk function,
        scalar product,
        scoring rule,
        semigroup,
        statistical model
      }
      
\iffalse        
 %       Cauchy-Schwarz inequality,
 %       entropy,
        generalized scalar product,
        inner product,
 %       Kullback-Leibler divergence,
        maximum likelihood,
        Pearson correlation coefficient,
 %       scoring rule,
        Shannon entropy, 
        stable distribution,
 %        Tichonov regularization,
         variation of information
         % variational principle,         
%	\noindent \textit{2010 MSC}:  {Primary } \\ }{
%	\phantom{\textit{2010 MSC}:}{Secondary } 
\fi

\smallskip

Classification:
20N99, % semigroup
16Y99, % semiring
94A17, % shannon entropy
46C99, % scalar product
62A01,  % Foundations and philosophical topics in statistics
62R01 % Algebraic statistics

\section{Introduction}

The original motivation of this research work was
the transfer of the principal component analysis (PCA)
to extremes. An obvious obstacle is that PCA requires
the existence of second moments, whereas, in extreme value theory, not
even the expectation is supposed to exist.
Only a few papers deal with this problem,
directly \citep{cooleythibaud19,dreessabourin21} or indirectly \citep{fissler2023generalised}, 
but none of them aimed for a unified approach to PCA.
The search for this unification revealed a rather general underlying algebraic structure to
statistical modelling, which we present here.
In order to understand the nature of this structure we tried to
reduce the assumptions to the essential.
Our paper can be considered as a complement to
\cite{mccullagh2002statistical}, as it lays the foundations for 
concrete constructions for model adaptations
 through minimization problems of certain risk functions.

The introduced concept is rather general, hence very flexible
concerning its application and interpretation.
Our explicit examples stem from very different areas, so that we will
use the physical notion of a system to describe 
a single random variable,
an information channel or a point in a vector
space. We have also applications to stochastic processes and molecular systems
in mind, but treat here  only very simple
examples.
Besides its generality, the gain of this article
 is a novel understanding
of established concepts, such as  the uncorrelatedness of two random
variables, an extension of such concepts without the necessity of a
second moment,
and the presentation of starting points for solutions to current problems, such as the
PCA for extreme values.
As the paper combines terms and concepts from various fields, including
algebra, information theory, physics, analysis and statistics, we henceforth
introduce into all relevant aspects.

Whereas the Gaussian world deals with sums of random variables,
extreme value theory deals with maxima. Since the binary operator ``$\max$''
does not have an inverse element \citep{maclagan2021introduction},
we will rely on monoids, i.e.,
semigroups with a neutral element,
as basic structures.
We will go further and weaken the algebraic laws
%, such as the commutative law and the notion of a neutral element
in Section \ref{sec:single}.
As neutral elements are important to our approach,
neutral elements have to be included also on the statistical side,
e.g., the Dirac measure $\delta_0$
will be part of
the family of centered Gaussian distributions.
Connections between statistics and algebra are well-established.
Besides the definition of a statistical model
\citep{mccullagh2002statistical},
algebraic approaches to statistical problems
have turned out to be useful
in experimental design, % (in particular orthogonal Latin squares),
non-parametric confidence areas \citep{francis2017building},
investigation of the likelihood \citep{manivel2023complete},
graphs \citep{dehmer2011history},
and quantum probability \citep{wigner31}. %Chapter 6

The physical entropy is, according to Wikipedia, ``a scientific
concept that is most commonly associated with 
a state of disorder, randomness, or uncertainty'',
cf.~\cite{tsallis2022entropy} and \cite{ilic2021overview}.
We abstract from this definition and consider an entropy
as a mapping that is additive for independent
systems and non-negative.
From this point of view, the squared norm of a pre-Hilbert space
will be considered as an entropy. %, if orthogonality is equated with independence.
Also the  variance  is considered as a measure of entropy
for random variables with existing second moments, cf.\
\cite{nguyen2019varphi}.
A further example is the
probability measure itself, if
disjointness of two events is understood as
independence. %uncorrelated if $\PP(A\cap B) = 0$ for a given $\PP$.
The Shannon entropy and more general the R\'enyi entropy,
both information theoretical
measures of uncertainty, are also included in the generalized definition.
In contrast to usual approaches in statistics to the notion of entropy, we do
not include any kind of randomness in its definition;
the proposed connection between our definition of the entropy and the
statistical 
model is postponed to the last
section.
Note that additivity and positivity
of the entropy is not always assumed in literature,
cf.\ the superadditivity in \cite{kaniadakis2017composition} or  the differential entropy.
%\cite{hou2016entropy} consider semigroups spanned by set of
%continuous mappings.
Also the Tsallis entropy \citep{havrda1967quantification, tsallis88} is
considered as
non-additive in literature, in contrast to our point of view in Example \ref{ex:tsallis}.
The idea of an algebraic approach to the entropy 
is not new.
For instance,
\cite{weiss74} % BF01762672.pdf
considers the entropy
of an endomorphism
over an Abelian group,
see also  \cite{dikranjan2009algebraic}.
%endomorphisms through the topological entropies and shows in
Proposition 1.4 there is similar to our 
Definition \ref{def:entropy.hemi-group} and the construction
in our Example  \ref{ex:productspace}.
\cite{bellon1999algebraic} define an entropy for rational maps.
\cite{hou2016entropy} consider semigroups of continuous
endomorphisms on a compact metric space.
The ideas behind the approach of \cite{tempesta16},
see also \cite{curado2016new}, are rather close to our Definition
\ref{def:entropy}, although the details and the directions differ.
Similar to the point of view in \cite{tsallis2016approach}, we
consider the entropy as a non-unique quantity, which may become unique
under certain additional conditions.

The notion of the scalar product is closely connected to the pre-Hilbert space,
although generalizations exist, in particular to the semi-inner-products
for a Banach space. In
most approaches, the basic property of linearity is at least partially
kept, see 
\cite{dragomir2004semi}, \cite{ehm2004kleene},
\cite{istratescu2012inner}, \cite{litvinov2000linear},
\cite{lumer1961dissipative}, \cite{nasehpour2018valuation},
\cite{tan14inner}, to cite
of few.
Our  Definition \ref{def:scalarproduct} of a generalized scalar
product 
is
geometrically motivated and measures the
deviation from the Pythagorean theorem.
A Pythagorean relation is considered important in statistics
\citep{efron1978geometry,stehlik2012decompositions}.
In our approach, the generalized
scalar product possesses only a rather weak property of linearity,
cf.\ Eq.\ \eqref{eq:sp.linear}.
Special cases of the generalized scalar product
include the scalar product
of the pre-Hilbert space and the probability of the intersection of two sets.
%It can be considered as
%a generalization of the min-plus scalar
%product \citep{gondran2008graphs}.
The generalized scalar product appears  in a
 generalized metric (Section
\ref{sec:comparing}),
which includes the geostatistical variogram \citep{chilesdelfiner}
and the variation of
information.
The Kullback-Leibler divergence %\citep{?}
can be interpreted as both a scalar
product and a generalized metric, cf.\ Example \ref{ex:KL}.

Being the basis of any statistical inference,
the definition of a statistical model $\cal P$
as a family of distributions 
is taught in every basic course of mathematical statistics.
At an advanced level, the additional properties, which
a statistical model should
have, are not obvious.
\cite{mccullagh2002statistical} requires the consistency of parameter
estimation for nested models.
%   \cite{maclaren2019can}: estimation of parameters in a 
%  parametric model; here we hav ethe perspective of submodel
%  selection, but tools can be applied to broader frameworks.
\cite{drton2007algebraic} require an additional algebraic property,
namely that a subset of the parameter
space is a semi-algebraic set.
\cite{helland2004statistical,helland2018statistical} considers invariant
transformation groups within the parameter space. 
Helland's approach comes rather close to ours, since
 we abstract the set of probability distributions
$\cal P$ to an algebraic system of transformations that can be applied
to a given distribution (Section \ref{sec:stat.model}).
%We consider in Section \ref{sec:stat.model}
%semigroups of transformations;
Similar to Helland's approach, our
examples given below indicate that
certain subsets of transformations can have much stronger properties.
Solely 
the properties of the transformations
are relevant for the statistical inference, so that
different families of
distributions
might be treated by the same approach.
% Our algebraic approach gives new insight into well-known concepts,
% new approaches to unsolved problems and might serve as a guideline
% for more complex models.

We postulate here, that
a measure for the entropy shall fulfill three different tasks, which we consider
in the following sections separately for clarity:
\begin{enumerate}
\item When two independent systems are joined without interaction,
  their entropies shall add \citep{penrose2005foundations}, cf.~Definition \ref{def:entropy.hemi-group}.
\item
  With regard to optimization problems including model adaptations, 
 we shall be able to compare two  systems by a
 a risk function or generalized metric. In Section~\ref{sec:comparing},
 we suggest
 a method 
 how to determine  the amount of entropy attributed to the discrepancy
 of two systems.
 %5will deal
%  with that point based on Definition \ref{def:plus}.
\item In regard of physical and statistical applications, 
  we shall be able to transform systems in a deterministic way and
  to determine the entropy of the transformed system (Definition
  \ref{def:entropy.hemi-ring}).
 % The identity  shall always be an allowed transformation (Definition
%  \ref{def:entropy.hemi-ring}).
\end{enumerate}
Until Section \ref{sec:stat.model}, an underlying stochastic
framework is not necessary.
%This is in particular true for
%the maximum-likelihood method considered in Example \ref{ex:mle}.
Only Definition \ref{def:stat.model}  bridges then to stochastic
systems on a theoretical level.
Nonetheless, many illustrating examples, which will
be presented in separate sections,
will deal with explicit
stochastic models.
With regard to the length of the paper 
and the self-containedness of the 
results in Section \ref{sec:comparing},
Sections \ref{sec:trafo} and
\ref{sec:stat.model} are rather short and
lay only the algebraic fundamentals for a broad range
of statistical applications. In particular, Section
\ref{sec:stat.model} is considered as an outlook,
which flashes on the direction only.
\cite{schlatherreinbott21}  give examples of statistical applications.

%\end{document}

\section{Entropy-driven algebraic approach}
As an entropy specifies the amount of randomness or noise in a
system, it shall be
non-negative.
Furthermore, small changes in randomness shall lead to a small change
in the entropy measure, in general. So, we need an underlying topology.

\subsection{Non-interacting  entropy-driven systems} \label{sec:single}
\label{sec:valuation}

\begin{definition}\label{def:entropy}
  Let $\GF$ be a topological space and
  $\lnorm . \rnorm : \GF \rightarrow
  [0,\infty]$ be a  map.
  Assume that the set of
  zero elements $\GF_0 := \{\xi \in \GF :  \lnorm \xi
  \rnorm =0\}$ is measurable, neither empty nor the whole space $\GF$.
  If  the map $\lnorm . \rnorm$ is continuous on $\GF \setminus \GF_0$,
  then
    $\lnorm . \rnorm $ is
    called  an \emph{entropy measure} for $\GF$ (or an \emph{entropy} on $\GF$).
  \end{definition}

\begin{definition}\label{def:entropy.hemi-group}
  Let $\GFsup$ be a topological space and
  $\lnorm . \rnorm : \GFsup \rightarrow
  [0,\infty]$ a  map.
  Let
  $\GF \subset \GFsup$ be a
  measurable subset and
  $\circ:\GF \times \GFsup \rightarrow  \GFsup$
  a measurable
  operator.
  If   $\lnorm . \rnorm$  restricted to  $\GF $ is an entropy and
  $$
  \lnorm \xi \circ \nu \rnorm = 
  \lnorm \xi  \rnorm +
  \lnorm \nu \rnorm 
\quad
\forall \xi\in \GF,\nu \in\GFsup
  ,$$ 
% and
%  $$
 % \lnorm \xi \circ \eta \circ \nu \rnorm =  \lnorm \eta \circ\xi\circ   \nu   \rnorm 
%\quad
%\forall \xi,\eta\in \GF,\nu \in\GFsup%\setminus\GF
%  ,$$ 
then we call
  $(\GF,\circ, \lnorm.\rnorm)$ an \emph{entropy-driven hemi-group}.
\end{definition}

\begin{remark}\label{rem:hemi}
  \begin{enumerate}
  \item
    We use the term hemi-group to
    underline that the imposed structure is less than that of a
    semigroup. 
    If $(\GF,\circ)$ is a semigroup and the entropy is continuous,
    then the entropy is typically
    a homomorphism into $([0,\infty], +)$.    
  \item
    We will understand $ \lnorm \xi \circ \nu \circ \eta \rnorm$
    as  $\lnorm \xi \circ (\nu \circ \eta) \rnorm$ for
    $\xi,\nu\in\GF$, $\eta\in\GFsup$.
    If $\GFsup \not= \GF $, then $\GFsup \setminus \GF $ plays the
    role of a bin. We are interested only in $\GF $, but
    will give some explicit examples of $\GFsup$ in
    Subsection~\ref{sec:ex:entropy}.
 \item \label{rem:hemi.hemi}
    The operator $\circ$  might be called left (right)
    hemi-commutative %and hemi-associative
    in the sense that for all $\xi,\nu\in\GF$ we have
    \begin{eqnarray*}
      \lnorm \xi \circ \nu \circ \eta \rnorm
      = 
      \lnorm  \nu \circ \xi \circ \eta \rnorm\; \forall \eta\in\GFsup
       &\mbox{and}&
    \lnorm \xi \circ \nu \circ \eta \rnorm
      = 
                    \lnorm  \xi \circ \eta  \circ \nu\rnorm
                    \; \forall \eta\in\GF
                    ,
    \end{eqnarray*}
    respectively.
    %     and
%    \begin{eqnarray*}
 %    \lnorm \xi \circ (\nu \circ \eta) \rnorm
  %    &= &
    % \lnorm \xi \rnorm + \lnorm \nu \circ \eta \rnorm =           
   %        \lnorm (\xi \circ \nu) \circ \eta  \rnorm 
    %       , \end{eqnarray*}
     %    respectively.
    Furthermore, an entropy-driven hemi-group $(\GF ,\circ)$
    has at least one left (and right) hemi-neutral element.
  \item
    Elements in $\GF_0$ can have a behaviour that is rather different
    from that of all other elements. In a pre-Hilbert space, the vector $0$
    cannot be renormalized to a vector of length $1$.
    In quantum computing this is important as the impossibility of
    renormalization  signifies the end
    of the program. Furthermore,
    Proposition \ref{lemma:reverse.scalar} shows that $\GF_0$ plays
    a special role, when reconstructing an entropy measure from a
    generalized scalar product. A last example is the geostatistical
    variogram, where important specifications have a jump in
    the origin and are continuous elsewhere \citep{gneitingsasvari99}.
    
  \end{enumerate}
\end{remark}

\subsection{Examples of entropy-driven systems}
\label{sec:ex:entropy}

Let $\NN_0 = \NN \cup \{0\}$. We denote a genuine subset by $\subsetneq$.

\begin{example}\label{ex:shannon}
  The Shannon entropy has been developed
  for characterizing and optimizing the transmission of
  information.
  Let $\alpha$ be some finite alphabet.
  It is usually assumed that the sequentially transmitted letters are
  independent and identically distributed according to a
  discrete distributon $p$ given by $p_x$, $x\in\alpha$.
  The Shannon entropy $\lnorm p \rnorm$ equals
  $$
  \lnorm p \rnorm = - \sum_x p_x \log p_x
  .$$
  Since the transferred letters are naturally modelled by a concatenation
  semigroup, the Shannon approach transforms %in our approach
  into a trivial concatenation
  semigroup of probabilities
  $(\GF , \circ)$ with
  $\GF  = \{ p^n : n\in\NN_{0}\}$  and
  \begin{eqnarray}\label{eq:shannon.p}
     \lnorm p^n \rnorm = - n\sum_x p_x \log p_x,\qquad n\in\NN_{0}
  .
  \end{eqnarray}
 To see clearer the entropy approach in Equation
 \eqref{eq:shannon.p}, we assume for simplicity that $n=2$ and
 that the second letter $y$ follows a different distribution, $q$, say.
 We still assume that the two letters are independent.
 As we only assume that the alphabet is finite, we may now extend the
 alphabet and include all
 pairs of letters  so that we get for the joint
 distribution~$r$  of
 the pairs
 $$
 \lnorm r \rnorm
 = -\sum_{xy} r_{xy}  \log r_{xy}
 = - \sum_x \sum_y (p_x q_y)
\log(p_x q_y)
= \lnorm p  \rnorm  + \lnorm q \rnorm 
.$$
Similar considerations apply to the more general R\'enyi entropy.
Both, the Shannon entropy and the R\'enyi entropy assume
that information is transmitted unbrokenly.
Example \ref{ex:shannon.dependable} suggests a model for transmission errors
in the case of the Shannon entropy.
  
 \end{example}

\begin{example}\label{ex:basic}
  In  stochastics, the binary operator $\circ$  models the
  ``merge'' of two independent random
  variables.
  \begin{enumerate}
  \item In the context of the central limit theorem, ``merging''
    signifies the addition of random variables.
    In particular, consider
    the set of centered Gaussian distributions
    including the Dirac measure $\delta_0$.
    Here, the variance is a (or better the) suitable entropy measure
    $\lnorm \cdot \rnorm$, see
    %Proposition \ref{pprop:scale}.\ref{pprop:scale.2} and    
    Example \ref{ex:stable.stat} for a rational.
    Instead of
    considering the family of distributions or the family of random
    variables, we may alternatively consider its
    parameter space $\GF $  of standard deviations, i.e.,
    $\GFsup = \GF  = [0,\infty)$.  Hence,
    $\lnorm s \rnorm = s^2$ for $s\in \GF $,
     $\GF_0 = \{ 0 \}$ and 
    $s \circ t = \sqrt{s^2 + t^2}$.
    Note that, if we were not interested in adding random variables,
    but in the amount of information a single random variable contains, we
    would choose the differential entropy as entropy measure
    together with the underlying concatentation semigroup.
    The differential entropy is the limit of differences between two Shannon
    entropies, the one which is of interest and a suitably choosen
    uniform distribution. Hence, the differential entropy can be
    negative and is therefore beyond the scope of this paper.
    % uniform [0, 1/sqrt Delta p], height 1/sqrt Delta p , 1/\Delta p squares
    % of length \sqrt Delta p to be summed up
    
  \item
    The Fr\'echet distribution  $\Phi_{\alpha,\lambda}$ is given by
    $\Phi_{\alpha,\lambda}(x) = \exp( - \lambda^\alpha / x^\alpha)
    \1_{x > 0}$, $\alpha,\lambda>0$. 
    It appears as a limit distribution of normalized maxima of
    independent, identically distributed random variables.
    For a fixed $\alpha > 0$ consider the set of all $\Phi_{\alpha, \lambda}$,
    $\lambda \ge 0$. Then, the parameter space is again $\GFsup=\GF =[0,\infty)$,
    and $\lnorm \lambda \rnorm = \lambda^\alpha$
    turns out to be the right choice, hence
    $\lambda \circ \mu = (\lambda^\alpha +
    \mu^\alpha)^{1/\alpha}$.
    See Examples \ref{ex:realaxis}.\ref{ex:frechet.1}
    and  \ref{ex:maxstable.stat} for further information.
    
  \end{enumerate}
\end{example}

We finalize this section with a further example from stochastics.

\begin{example}\label{ex:productspace}
  Let $\GFsup = \left(\times_{i=1}^\infty \Omega, \otimes_{i=1}^\infty \A,
   \times_{i=1}^\infty \PP\right)$ be a product 
  probability space. Let $\Omega^\infty = \times_{i=1}^\infty \Omega$,
  $\GF  = \{ A \times \Omega^\infty : A \in \A\}$ and
  $$%\lnorm A \rnorm :=
  \lnorm A \times  \Omega^\infty\rnorm := -\log
  \PP(A)
  $$
  for all $A \in \A$.
  This choice is called information content of $A$
  \citep{mcmahon2007quantum}.
%  and % \url{https://en.wikipedia.org/wiki/Entropy_(information_theory)#Alternate_characterization}
%  might be adequate in statistics in the context of confidence areas $A$.
   Let $\circ$ be defined by
  $$ \left(\bigtimes_{i=1}^m A_i \times \Omega^\infty\right) \circ
  \left(\bigtimes_{j=1}^n B_j \times \Omega^\infty\right)
  \rightarrow  \bigtimes_{i=1}^m A_i\times
  \bigtimes_{j=1}^n B_j \times \Omega^\infty
  \qquad \forall m,n\in\NN, A_i,B_j\in \A
  $$
  with $A_m \not= \Omega$, $B_n\not=\Omega$.
  Then, $(\GF , \circ, \lnorm\cdot  \rnorm)$ is an entropy-driven
  hemi-group.
\end{example}

\section{Entropy-driven systems with comparable elements}
\label{sec:comparing}

The approach in the previous
Section \ref{sec:valuation} is sufficient to model  simple situations,
which involve independent systems.
%, which appears in limit theorems
%on stable distributions or in dynamical systems.
To compare two systems,
for instance in order to find the best approximation of a system by an element
of a given class of systems,
% the original data with its
%$k$-dimensional approximation. To this end,
we need
a kernel $\rho$,
which defines the distance between
two systems.
A kernel is a real-valued function that depends on two variables of the same set of systems.

We call $\rho$ a hemi-metric (or risk or hemi-divergence as it
generalizes the statistical divergence).
In a statistical context,
the hemi-metric will measure the entropy
of the residuals
between the original data $X$, say, and its approximation,
$Y$.
The definition of the hemi-metric $\rho$ will be based on a
second, binary operation denoted by $\plus$, which 
models the interacting merge of two %fully dependent
systems and which is
the standard
addition in many practical applications.
% Example \ref{ex:shannon.stat2} suits as a prototype.
The motivating idea is that the entropy of a residual
equals the entropy of the part of $X \plus Y$
that cannot be explained  by $X \plus Y^*$
where $Y^*$ is a copy of $Y$ that is independent of $X$. % cf gneiting
                                % scoring rules, cf.~"Evegny??
To be more specific, let $X$ and $Y$ be real-valued random variables with
finite second moments. Then,
\begin{eqnarray}
  \label{eq:var}
  \Var(X - Y) = -\Var(X + Y) + 2\Var (X + Y^*)
  .
\end{eqnarray}
Obviously, the left handside of \eqref{eq:var} needs an underlying
group structure on rather complicated objects $X$ and $Y$, while the
right handside needs at most a semigroup structure for $X$ and $Y$ and
exploits the
group structure of $\RR$ instead.
Hence, the right handside can be generalized to
binary operators $\plus$ that do not allow for inverse elements.
As we have an entropy based point of view,
we shall require that
\begin{eqnarray}
  \label{eq:rho.entropy}
  \rho(\xi, \eta) = \lnorm \xi \rnorm
\quad\
\forall \xi\in \GF, \eta\in \detsys
,
\end{eqnarray}
for some non-empty subset $\detsys\subset  \GF_0$
of the zero elements. % (see Definition \ref{def:entropy}).
We consider the set $\detsys$ as the set of deterministic systems.
Combining \eqref{eq:var} and \eqref{eq:rho.entropy},  our proposed ansatz is 
\begin{eqnarray}\label{eq:GFGS}
  \lnorm \xi \plus \eta \rnorm &= & \lnorm\eta \plus \xi  \rnorm=\lnorm \xi
  \rnorm,\qquad \forall  \xi \in \GF, \eta \in \detsys,
  \\
  \rho_a(\xi, \nu)
  & =&\label{eq:rhoc}
       a \lnorm \xi \plus \nu\rnorm + (1-a) \lnorm \xi \circ \nu \rnorm
,\quad \xi,\nu\in \GF,
\end{eqnarray}
that is, the kernel $\rho_a$ consists of  an affine combination of
entropies with $a\in\RR$.
Since $\rho_a$ shall be interpreted as a measure of
entropy, we define
$$
\Xi = \{ a \in \RR  : \rho_a(\xi,\nu)  \ge  0\quad \forall \xi,\nu \in
\GF
%\text{ with } \lnrom \xi \circ \nu \rnorm > 0
\}
$$
and require that $a\in\Xi$. Obviously, $\Xi$ is an interval which
includes $[0,1]$.
Let
\begin{eqnarray*}
  M_\GF &=& \sup_{\xi,\nu \in \GF, \lnorm \xi \circ \nu
          \rnorm > 0} \frac{ \lnorm \xi  \plus \nu\rnorm}{\lnorm \xi \circ \nu 
          \rnorm}
          ,
  \\
  m_\GF &=& \inf_{\xi,\nu \in \GF, \lnorm \xi \circ \nu
          \rnorm > 0} \frac{ \lnorm \xi \plus \nu\rnorm}{\lnorm \xi \circ
          \nu
          \rnorm}
          .
\end{eqnarray*}
so that $m_\GF \in[0,1]$ and $M_\GF \in[1, \infty]$, since
$\lnorm \xi \plus  \nu\rnorm = \lnorm \xi  \rnorm=
\lnorm \xi  \circ \nu \rnorm$ for all $\xi\in\GF$ and
$\nu\in\detsys$. %\not=\emptyset$.
Hence,
$$
\Xi = [(1-M_\GF )^{-1},(1-m_\GF )^{-1}] \setminus\{-\infty, \infty\}
.
$$
In particular,
$$
\Xi = [(1-M_\GF )^{-1}, 1] \setminus \{ -\infty\},
$$
if $(\GF ,\plus)$ is a group.
From a statistical point of view, the quotient  
$\lnorm \xi  \plus \nu\rnorm / \lnorm \xi \circ \nu  \rnorm$ can be
interpreted as a relative noise content.

\begin{definition}\label{def:rho}\label{def:plus} \label{def:scalarproduct}
  Let $\GF$ be an entropy-driven hemi-group with $\lnorm \xi \rnorm <
  \infty$ for all $\xi\in\GF$.
  Assume, a binary operation $\plus$ on $\GF$ exists with 
  property \eqref{eq:GFGS} and
%  \begin{enumerate}
   % \item $(\GF , \plus)$ is a monoid with (left or right)
 %  neutral element $0\in \dettrafo $.
% \item
%    $$   \lnorm \xi \plus \eta \rnorm = \lnorm \xi
%    \rnorm \quad \forall \xi \in \GF, \eta \in \detsys    $$
%   \item 
  %  $M_\GF > 1$ and  $m_\GF \not\in (0,1)$, 
   % or $M_\GF = 1$ and $m_\GF \in (0,1)$, or
   % $M_\GF > 1$ and $m_\GF \in (0,1)$ and
  the function
    \begin{eqnarray}
      \label{eq:cond2.comparable}
    \xi \mapsto \lnorm \xi \plus \xi \rnorm - 2\lnorm \xi \rnorm     
    \end{eqnarray}
    shall not change sign.
    The sign is denoted by $\signXi$ with $\signXi\in\{-1,+1\}$.
%  \end{enumerate}
  Then, the tuple  $(\GF,\circ, \plus, \lnorm .\rnorm)$ is said to have
  \emph{comparable elements}.
  A kernel $\rho_a$, given by \eqref{eq:rhoc} with $a\in\Xi$, is
  called \emph{associated hemi-metric}.
  The kernel
  \begin{eqnarray}\label{eq:sp.assoc}
 \langle ., .\rangle_{\assoc} : \GF \times \GF \rightarrow \RR,\quad
  (\xi,\eta) \mapsto \signXi (
   \lnorm  \xi  \plus \eta \rnorm
   -
\lnorm \xi \circ\eta \rnorm
  )    
  \end{eqnarray}
  is called the \emph{associated hemi-scalar product}.
  If $ \langle \xi , \eta\rangle _\assoc= 0$ ($>0$, $<0$), we call
  $\xi$ and $\eta$ \emph{orthogonal}/\emph{uncorrelated} (\emph{positively correlated},
  \emph{negatively correlated}).
%  The entropy $\lnorm
%  . \rnorm$
%  is called minimal, if for any other entropy $\lnorm
%  . \rnorm_*\not=\lnorm \cdot \rnorm$ that creates the same hemi-scalar product we have
%  $\lnorm  . \rnorm_*  \not \le\lnorm  . \rnorm$.
 \end{definition}

   In quantum stochastic calculus, our  
   redefinition of the scalar product is
   appreciated in the special case of the covariance $\cov(X,Y)$,
%   well-known   in a special case,
    as it is based on the observables $X$, $Y$, $X+Y$ only,
    hence accessible to measurements    
    \citep[p.\ 15]{parthasarathy}.
The  idea of the hemi-scalar product also appears implicitly
 in  Proposition 3.2 in Chapter 3 of \cite{BCR}, %p.82
which    deals with the  generalization of a
squared difference of real values towards complex values, in the framework
of Hilbert spaces.

The choice of an affine combination in \eqref{eq:rhoc} is justified by
the fact that $\rho_a$ appears frequently in practical
applications, see Subsection \ref{sec:ex:comparable}, in particular
Example \ref{ex:tichonov}.
Since, in many practical
examples, the entropy measure is a convex or concave function
and since all convex combinations are included in $\Xi$, the
hemi-measure $\rho_a$ has a chance to inherit this property at least
for some $a\in\Xi$.
From an aesthetic point of view, the affine functions are simple;
they 
are the only functions that are both
concave and convex, and their curvature is minimal.

    \note{
    \cite[p.\ 15]{parthasarathy}:
``It is noted that for observables $X$, $Y$, whose coariance in a
    state $\rho$ is real one has
    $$
    \Cov_\rho(X, Y) = \frac12\left(\Var_\rho(X+Y)-\Var_\rho X -
      \Var_\rho(Y)\right) 
    .$$
    One may use the right hand side of this equation as an alternative
    definition of covariance. This has the advantage that it can be
    estimated by repeated individual measurements on the observables
    $X$, $Y$ and $X+Y$.''
}

  Note that the above definition reverses standard definitions and
  corollaries. For instance, in the standard approach of
  stochastics, uncorrelatedness is
  defined as the covariance being $0$, hence
  the variance of the sum of uncorrelated second-order
  random variables equals the sum of the variances.
  Here, Definition \ref{def:scalarproduct} %reverts the relation and
  suggests the interpretation that
  two random quantities are called uncorrelated if they behave as if they
  were independent (with respect to their entropy).
  Or, in the wording of a pre-Hilbert space,
  the equality $ \lnorm  \xi  \plus \eta \rnorm=\lnorm \xi \circ\eta \rnorm
 $ is the statement of the Pythagorean theorem, i.e.,
  we call two vectors orthogonal if
  the Pythagorean theorem holds (and not reversely, if two vectors are
  orthogonal, then the Pythagorean theorem holds).
  Apart from the interpretation of the hemi-scalar product as the deviation
  from the Pythagorean theorem, some other issues shall be mentioned.
  First, the elementary requirement of a standard scalar product, that
  it is bilinear, has been dropped in the definition of the
  hemi-scalar product. Bilinearity is now a pecularity of a pre-Hilbert
  space.
  Second, motivated by statistical applications,
  the definition is chosen such that the correlation with
  itself is never negative; this excludes an immediate application
  of our approach to Krein spaces, for instance.
  Third, in contrast to common approaches in statistics, the residual
  itself remains undefined; we only know the entropy of this fictitious
  residual.
  Fourth, our approach does not guarantee that $\rho_a(\xi, \xi) = 0$
  for all $\xi \in \GF$.
  Fifth, our approach cannot exclude that
  $ \lnorm \xi \plus \xi \rnorm - 2\lnorm \xi \rnorm \equiv 0$, i.e., in
  some cases,  $\signXi$ is undetermined and can be chosen freely, cf.~Example
  \ref{ex:realaxis}.\ref{ex:cauchy}. 

\begin{remark}
  Let $a\in\Xi \setminus\{0,1\}$ and $\xi,\eta \in \GF$. Then, the following assertions are equivalent:
  \begin{enumerate}
  \item  $\xi$ and $\eta$ are uncorrelated;
  \item  $ \lnorm \xi \circ\eta \rnorm = \lnorm
    \xi  \plus \eta \rnorm$;
  \item $\rho_a(\xi, \eta) = \lnorm \xi \circ\eta \rnorm$;
  \item $\rho_a(\xi, \eta) = \lnorm \xi \plus\eta \rnorm$.
  \end{enumerate}
 \end{remark}

\begin{remark}
%\marginpar{Die PHysiker duerften doch sowas schon gemacht haben.}
  The definition of the scalar product  suggests a multivariate extension,
  $\langle \xi_1,\ldots, \xi_n\rangle_\assoc= \signXi  \left(\lnorm \dotsumG{}_{i=1}^n \xi_i \rnorm -
  \lnorm  \bigcirc_{i=1}^n \xi_i \rnorm 
\right)$, so that an ordered set $\{\xi_1,\ldots, \xi_n\}$ might be called
uncorrelated
if   $\lnorm \dotsumG{}_{i=1}^n \xi_i \rnorm =
  \lnorm  \bigcirc_{i=1}^n \xi_i \rnorm $.
\end{remark}

\begin{definition}\label{def:comparable} % {def:scalarproduct}
  Assume,  that  the conditions in Defintion \ref{def:rho} are satisfied.
  Let $a_{-1}= 1 / (1- m_\GF) $ and $a_{+1}=1 / (1- M_\GF)$, and
  assume that $ \aopt$ is finite. 
  Then, the hemi-metric $\rhoCanon:=\rho_{\aopt}$  is called \emph{canonical}.
 % If $|\aopt| \not= \infty$,
  The two
  kernels
  \begin{eqnarray*}
    \langle .,
    .\rangle &:=  &|\aopt|\, \langle ., .\rangle_{\assoc}
                    = -\aopt( \lnorm . \plus. \rnorm
                     -  \lnorm . \circ .\rnorm )
    \\
 \langle ., .\rangle_{2} & :=&  \langle .,
  .\rangle / 2
  \end{eqnarray*}
   are called \emph{canonical hemi-scalar products}.
%  The hemi-metric $\rhoCoCanon := \rho_{a_{(-\signXi)}}$ is called \emph{co-canonical}.
  
 \end{definition}

While $ \langle ., .\rangle_2$ is often used in practice, 
$ \langle ., .\rangle$ suits better 
 this set-up. In particular, we have
\begin{eqnarray}\label{eq:rhoopt}
  \rhoCanon(\xi, \eta)
  & =& \lnorm \xi \circ \eta \rnorm - \langle \xi, \eta \rangle
\end{eqnarray}
and
\begin{eqnarray}
  \rhoCanon(\xi, \eta)    &= &\label{eq:rhoopt2}
          \lnorm \xi \plus \eta \rnorm - \frac{\aopt-1}{\aopt }
         \langle \xi, \eta \rangle
         \qquad
         \mbox{ if }
         \aopt \not=0
 .  
\end{eqnarray}

 %  Definition \ref{def:comparable} leads to  the  neat and
%  interpretable formula \eqref{eq:rhoopt} for the hemi-metric $\rho$.
  A superficial reasoning for the choice of $\aopt$ is that the choice $a=0$
  is not sound in most practical applications;
  choosing a value far away from $0$ looks hence promising.
  A deaper reasoning is the following. In practical applications, an
  observation $\xi$, say, consists of a structural part and a noisy
  part. A model $\eta$ is searched such that the amount of noise
  $\rho(\xi,
  \eta)$ between the data and the model is minimized. Among the
  ensemble of risk functions, the one which gives the smallest values
  to the noisy part seems to be outstanding.
  For any $a\in\Xi$ the fraction of the
  noisy part, i.e., the part of $\xi \plus\nu$
  that cannot be explained 
  by $\xi \circ \nu $,
  can be defined as
 $
 \rho_a(\xi ,\nu)/({a \lnorm \xi  \plus \nu\rnorm})
 $
 for $a \lnorm \xi  \plus \nu\rnorm\not = 0$.
 Then, its  modulus
 $f_{\xi,\nu}(a)  := \rho_a(\xi ,\nu)/({|a| \lnorm \xi  \plus \nu\rnorm})$
 should be minimized.
 For any $\xi,\nu\in\RR$ with 
 ${\lnorm \xi \circ \nu 
   \rnorm}  / { \lnorm \xi  \plus \nu\rnorm} \in (0,\infty)$, 
the function  $f_{\xi,\nu}$
 is strictly increasing for negative arguments and decreasing for positive
 arguments. Hence, 
 Definition
 \ref{def:comparable} is a consequence.
\note{
 Furthermore,
 $f_{\xi,\nu}(\pm \infty) =\pm( 1 - \lnorm \xi\circ \nu \rnorm / \lnorm
 \xi\plus \nu \rnorm)$,
 and $f_{\xi,\nu}(a) \ge 0$ for all $\xi,\nu\in\GF$ with $\lnorm
 \xi \plus \nu\rnorm >0$ and $a\in\Xi$.

Furthermore, $f_{\xi,\nu} (a) \ge 1 + (a^{-1} -1) / m_\GF $ for $a\ge 1$
so that  $\sup \Xi =   1 / (m_\GF  -1)$, and  
$f_{\xi,\nu} (a) \ge -1 + (1 -a^{-1}) / M_\GF $ for $a\ge 1$ for $a<0$
so that $\inf \Xi = 1  / (M_\GF  -1)$.
}

%\begin{remark}
%  In many applications, the function
%  $\xi \mapsto \lnorm \xi \plus \xi \rnorm - 2\lnorm \xi
%  \rnorm$ does not change sign, which immediately fixes the optimal  $c$.
%\end{remark}

\begin{remark}\label{bem:infty}
  Definition
  \ref{def:comparable} prefers $\rho_{\aopt}$ over
  $\rho_{a_{-\signXi}}$ as many examples follow this scheme.
  The choice of $\rho_a$
%  between $\rho_{\aopt}$ and  $\rho_{a_{-\signXi}}$
  is however not that clear cut. 
  Particularly,
  if either $\inf \Xi = -\infty $ or $\sup\Xi = \infty$,
  i.e., $M_\GF=1$ of $m_\GF=1$,
  an alternative
   to $\rhoCanon$ is
   $$
   \rho_{\infty} := \lim_{a\rightarrow \pm \infty} \frac{\rho_a}{|a|}
   = \langle \cdot, \cdot \rangle_\assoc
   .
   $$
   Note, that  $\rho_\infty$ is necessarily identically $0$,
   if both $\inf \Xi = -\infty $ and $\sup\Xi = \infty$.
 \end{remark}

 \subsection{Examples}
 \label{sec:ex:comparable}

From an algebraic point of view, it is desirable to put much more
structure onto the operator $\plus$ than required in its definition.
This will be done  in the next subsections.
The examples given
here are not targeted to this point.

\begin{example}[Pre-Hilbert space]\label{bsp:hilbert}
  Let $H$ be a pre-Hilbert space with scalar product $\langle \cdot ,
  \cdot \rangle_H$.
  Let $\lnorm x \rnorm := \langle x, x\rangle_H$ and  $\plus$
  be  the standard addition in $H$.
  We define $\circ : H \times H \rightarrow H^2$
  as the map $(x; y)\mapsto (x, 0) + (0,y) = (x,y)$.
  To be consistent with Example \ref{ex:productspace}, we might
  alternatively let $x \circ y = (y, 0)$, if $x=0$.
  Now, $\signXi =+1$
  and we have for $x,y\in
  H$ with $\lnorm x \circ y \rnorm \not=0$, that 
  $$
  \frac {\lnorm x + y \rnorm}{\lnorm x \circ y \rnorm}
%  = \frac{ \| x + y \|^2}{\|x\|^2 +  \|y\|^2}
%  = 1 - 2\frac{ \langle x, y\rangle_2}{\|x\|^2 +  \|y\|^2}
  \le 1 + 2 \sup_{z\in H} \frac{ \langle e, z\rangle_2}{1 +
    \|z\|^2} =
  1  + 2 \sup_{r\ge 0} \frac{ r }{1 + r^2} = 2
  $$
%  Similarly, ${\lnorm x \plus y \rnorm}/{\lnorm x \circ y \rnorm} \le  2$ and
 % $\lnorm x + x \rnorm - 2 \lnorm x\rnorm %= 2 \| x\|^2  \ge 0$.
  Hence, %$m_\GF=0$,
  $M_\GF =2$, $\aopt=-1$, and the canonical hemi-metric $\rhoCanon$
  reproduces
  the parallelogram law,
  $$
  \rhoCanon(x,y) = - \| x+ y\|^2 + (1 - (-1))( \| x\|^2+ \|y\|^2) =  \| x- y\|^2
  .$$
Furthermore,
$\langle  ., . \rangle_2 = \langle  ., . \rangle_H$, i.e.,
the definition of the canonical scalar-product $\langle  .,
. \rangle_2$  is
in accordance with the standard definition
of the scalar product in the pre-Hilbert space.
  \end{example}

\begin{example}[$L_p$ space]
  An extension of the previous example
  to real-valued $L_p$-spaces with
  $p>1$ is not straightforward, even in the finite dimensional case. In fact, defining 
  the entropy as
  $$
  \lnorm x \rnorm = \| x \|^p_p, \qquad x\in\RR^d,
  $$
  gives $M_\GF = 2^{p-1}$. The approach
  leads to unusual
  formulae for both the hemi-scalar product 
   $$
  \langle x, y \rangle_\assoc = \|x + y \|^p - \|x\|^p - \|y\|^p
  $$
  and the hemi-metric
  $$
  \rhoCanon(x,y) = \| x\|^p + \|y\|^p - (2^{p-1} -1)^{-1}(\|x + y \|^p - \|x\|^p - \|y\|^p)
  .$$
  At least, the G\^ateau derivative
  $$
  \langle x, y \rangle_\assoc' := \lim_{t\rightarrow 0} t^{-1}
  \langle tx, y \rangle_\assoc =p \sum_{i=1}^n x_j y_j |y_j|^{p-2}
  $$
  is closely related to the standard semi-inner-product, which is
  $[x,y ] =   p^{-1} \langle x, y \rangle_\assoc'/ \| y \|_p^{p-2}$,
  cf.~\cite{tapia1973characterization} for properties of
  the G\^ateau derivative.
  % Then, $m_\GF=0$ and $M_\GF  = 2^{p-1}$, so that for $p>1$
  The discussion is continued in some respects in the next example, as
  the map $x \mapsto \|x\|_p^p$ is a variogram for $p\in (0,2]$.
\end{example}

\begin{example}[Variogram]
A symmetric, real-valued kernel $g$ over a set $G$ is called negative definite
if
   $$
      \sum_{i=1}^n\sum_{j=1}^n a_i a_j g(\nu_i, \nu_j)\le 0
      $$
      for all $n\in\NN$, $\nu_i\in\GF$ and
      $a_i\in\RR$, with $\sum_{i=1}^n a_i =0$.
%     In many practical applications $G=\RR^d$.
      Let $\lnorm \cdot \rnorm$ be a
%real-valued negative definite function over a group $\GF$, i.e.,
 variogram over  $\GF=\RR^d$, i.e., $\lnorm 0
\rnorm=0$ and the map $(x,y)\mapsto \lnorm x- y \rnorm$ is a negative
definite kernel. 
We assume that $\lnorm \cdot \rnorm\not\equiv 0$ and that $\lnorm
\cdot \rnorm$
 is
 continuous outside the origin.
 We may construct the operator $\circ$ similar to Example \ref{bsp:hilbert}.
If we define $a \plus b = a -b$ for $a,b\in \GF$, then $\signXi = -1$
and $\aopt = 1$. That is,
\begin{eqnarray*}
  \langle x, y \rangle & = &  \langle x, y \rangle_\assoc = \lnorm x \rnorm+\lnorm y \rnorm - \lnorm x - y
                             \rnorm,
  \\
  \rhoCanon(x,y) & = &%  \lnorm x \plus y\rnorm =
                       \lnorm x - y\rnorm
                             .
\end{eqnarray*}
Note that $\langle \cdot, \cdot \rangle$ 
is an ordinary positive definite kernel (Proposition 1.9 in Chapter 1 of
\cite{BCR}), %p. 89
and
the Cauchy-Schwarz inequality yields
that $M_\GF  \in [1, 2]$.
\note{

(i) $\sup_{x,y} \gamma(x-y)/ (\gamma(x) + \gamma(y)) \ge  \gamma(x-0)/
(\gamma(x) + \gamma(0))=1$

(ii) $| \gamma(x) + \gamma(y) - \gamma(x-y) | \le 2\sqrt{\gamma(x)
  \gamma(y)}$ So,
$$|1 - \gamma(x-y) / ( \gamma(x) + \gamma(y)) | \le 2\sqrt{\gamma(x)
  \gamma(y) / ( \gamma(x) + \gamma(y))^2 }
= 2\sqrt{\frac{\gamma(x)/\gamma(y)}{ ( 1 + \gamma(y) / \gamma(x))^2}}
  \le 1
  $$
as$x/ (1+x)^2$ has its optimum at $(1,1/4)$.
  Hence, $ -1 \le \gamma(x-y)/ (\gamma(x) + \gamma(y)) \le 2$

  (iii) for $\gamma(x) = \|x\|^\alpha$ we have
  $$\sup_{x,y} \gamma(x - y) / (\gamma(x)
  + \gamma(y)) =  \sup_{e,y}\| e - y\|^\alpha / (1 + \|y\|^\alpha)
  = \sup_r |1+r|^\alpha / (1 + r^\alpha)
  $$
  $M_\GF =1$  happens for many variograms
  }
\end{example}

\begin{example}[Measures]\label{ex:sets}
  Let $\lnorm \cdot \rnorm $ be the Lebesgue measure on the space $\GF $
  of compact sets 
  in $\RR^d$. As $\lnorm A \cup B\rnorm = \lnorm A\rnorm + \lnorm B\rnorm$ for $A\cap
  B=\emptyset$ we may construct a well-defined operator $\circ : G
  \times G \rightarrow G$
  with $\lnorm A \circ B\rnorm = \lnorm A \rnorm +
  \lnorm B\rnorm$ for all compact sets $A$ and $B$ (e.g.\ by suitably
  shifting before taking the union).
  Furthermore, let $\plus$ be $\cup$. Then, we have
  $\signXi = -1$, $m_\GF  = 1/2$, $M_\GF =1$, $\aopt=2$, and
    \begin{eqnarray*}
      \langle A, B\rangle_2& =& \langle A, B\rangle_\assoc =
                              \lnorm A\rnorm + \lnorm B\rnorm - \lnorm
                                A\cup B\rnorm = \lnorm A 
                                \cap B\rnorm,
      \\\rhoCanon(A,B)&=& 2 \lnorm A\cup B\rnorm
                          - \lnorm A\rnorm - \lnorm B\rnorm
                          =\lnorm A\cup B\rnorm - \lnorm A\cap B\rnorm
    \end{eqnarray*}
    That is, $\rhoCanon$ measures the content
    of the symmetric difference $A \Delta B$.
%    The limit hemi-metric $ \rho_\infty(A, B) =  \lnorm A\cap B\rnorm $  seems useless.

    Replacing the Lebesgue measure by a finite measure $\PP$, all terms
    remain the same, except that the set $A \circ B$  becomes
    ``imaginary'', whenever $\PP(A \circ B)>\PP(\RR^d)$.
    In the context of statistical prediction,
    \cite{makogin2022prediction} consider the excursion metric $E_U$
    between two random variables $Y_1$ and $Y_2$,
    $$
    E_U(Y_1, Y_2) = \EE_U \PP(\{ Y_1 > U \} \Delta \{ Y_2 > U \} \mid U)
    \in[0,1],
    $$
    where $U$ is a fixed random variable independent of $Y_1$ and  $Y_2$.
    That is, the implicit definitions are
    $\lnorm Y_1 \rnorm :=   \PP(\{ Y_1 > U \})$ and
    $Y_1\plus Y_2 := \max\{ Y_1 , Y_2\}$.
    %, so that we still have   $\signXi = -1$, $m_\GF  = 1/2$ and
    %$\aopt=2$.
    In the context of topological dynamics,
    \cite{abert2012rank} define $\GF$ as the ensemble of
    Borel sets of $X \times \Gamma$, where $X$
    is some Polish space and $\Gamma$ is a finitely generated group
    equipped with the discrete topology. Let
     $\lambda$ be the product measure $\PP \times \nu$, where $\PP$ is some
    probability
    measure and $\nu$ is the counting measure on
    $\Gamma$. They define
    $
    \lnorm M \rnorm = \lambda(M)
    $
     and consider the canonical hemi-metric
    $
    \rhoCanon(M, N) = \lambda(M \Delta N)$ for certain $M,N\in \GF.
    $

\end{example}

\begin{example}[Real axis]\label{ex:realaxis}
  Let $\GF\subset \RR$ be an interval, which includes the value $1$
  and is closed under multiplication. Let
  $\alpha >0$ and $\lnorm  \xi \rnorm = |\xi|^\alpha$.
%  We assume that
%  $$\lnorm (\xi \plus \eta) \cdot\nu\rnorm =  \lnorm  \nu \rnorm \lnorm
%  \xi  \plus  \eta \rnorm  ,\qquad \forall \eta,\xi,\nu\in \GF 
%  .$$
  Let $c_\alpha = \lnorm 1 \plus 1 \rnorm$ and
  assume that $c_\alpha = \lnorm (-1) \plus (-1) \rnorm$,  if $-1\in \GF $.  
  Then, we have
  $\signXi = +1$ if $c_\alpha >2$ and $\signXi=-1$ if $c_\alpha
  < 2$.
%  If $\GF $ contains negative values, then $m_\GF = 0$.
  Let us consider 5 specifications:
  \begin{enumerate}
  \item Let $\GF = \RR$, $\alpha\not=1$, and  $\plus$ be the ordinary
    addition. Then, $c_\alpha = 2^\alpha$, $m_\GF =0$, and  $M_\GF =
    2^{\max\{\alpha-1,0\}}$.
    We distinguish two subcases:
    \begin{description}
    \item[case $\alpha >1$.] Here, $\signXi = +1$, $\aopt = (1 -
      2^{\alpha - 1})^{-1}$,
      \begin{eqnarray*}
        \langle \xi,
        \eta \rangle
        &= & (2^{\alpha - 1}-1)^{-1}
             \left(| \xi + \eta|^\alpha - |\xi|^\alpha - |\eta|^\alpha\right),
        \\
        \rhoCanon(\xi,\eta)
        &= &| \xi|^\alpha + | \eta|^\alpha - 
     \langle \xi,
                             \eta \rangle
                             .
     \end{eqnarray*}
    \item[case $\alpha <1$.] Here, $\signXi = -1$, $\aopt = 1$,
      \begin{eqnarray*}
        \langle \xi,
        \eta \rangle& =& \langle \xi,\eta \rangle_\assoc =  |\xi|^\alpha +
                           |\eta|^\alpha - | \xi + \eta|^\alpha,
        \\\rhoCanon(\xi,\eta)&=& | \xi + \eta|^\alpha
                                 .
      \end{eqnarray*}
      Hence, $\rho(\xi, \xi) > 0$ and $\rho(-\xi ,\xi)=0$ for all $\xi \in
      \RR\setminus\{0\}$.
      This is an undesirable result.
%      Define $\plus$ as the ordinary substraction is not a good
 %     alternative, since $(\GF , -)$ is not even a hemigroup.
%      which does not change the above formulae except that ``${}+\eta$'' is replaced
%      by ``${}-\eta$''. 
      Since $M_\GF =1$, the hemi-metric $\rho_\infty =  \langle \cdot , \cdot
      \rangle$ is an appealing alternative.
      % , but finally harmless situation.
    \end{description}
  \item We reconsider the previous example, but let $\GF = [0,\infty)$.
    While the case $\alpha > 1$ remains unchanged we get for $\alpha
    <1$ that $m_\GF  = 2^{\alpha -1}$. Hence,
    $$
    \rhoCanon(\xi,\eta)=  \xi^\alpha +  \eta^\alpha -
    \left(1 -2^{\alpha-1}\right)^{-1}  \langle \xi,
    \eta \rangle_\assoc
    .
    $$
 \item \label{ex:frechet.1}\label{ex:max.real}
   Let $\GF =
        [0,\infty)$, $\alpha>0$, and  $\plus$ be the ordinary
    maximum denoted by $\vee$.
    Then, %$c_\alpha = 1$,
    $\signXi = -1$, $m_\GF  = 1/2$, $\aopt = 2$, and
    \begin{eqnarray*}
      \langle \xi,
      \eta \rangle_2& =& \langle \xi,\eta \rangle_\assoc   =  \xi^\alpha +
                       \eta^\alpha - ( \xi \vee \eta)^\alpha
                         = ( \xi \wedge \eta)^\alpha,
      \\\rhoCanon(\xi,\eta)&=& %2 ( \xi \vee \eta)^\alpha -
                               %(\xi)^\alpha -(\eta)^\alpha=
                               ( \xi \vee \eta)^\alpha -( \xi \wedge \eta)^\alpha
                               .
    \end{eqnarray*}
    \cite{stoevtaqqu05} show that $\rhoCanon$ is an ordinary
    metric. % prop 2.6, proof
    \cite{rottger2023total} deal with the total positivity of order 2, which
    translates into $\rhoCanon(\xi,\eta)\ge0$.
    
  \item 
      We reconsider the previous example, but let
      $\GF = \RR$.  Then, $m_\GF  = 0$ and
    \begin{eqnarray*}
      \langle \xi,
      \eta \rangle& =& \langle \xi,\eta \rangle_\assoc 
                       = | \xi \wedge \eta|^\alpha,
      \\\rhoCanon(\xi,\eta)&=& | \xi \vee \eta|^\alpha
                               .
    \end{eqnarray*}
    Note the asymmetry between the positive and the negative real axis:
    $\rho(\xi, 0) = \xi^\alpha$ and $\rho(-\xi,0)=0$ for all $\xi >0$.
    The limit hemi-metric  $\rho_\infty = \langle \cdot,
    \cdot \rangle $ is not a good alternative.
    A kind of symmetry is obtained if the negative axis is supplied with the
    minimum. If $x \plus y := x +  (y-x) \1_{|y| > |x|}$,
    then we get 
    for all $\xi,\eta\in\RR$ that
    \begin{eqnarray*}
      \langle \xi,
      \eta \rangle& =& \langle \xi,\eta \rangle_\assoc 
                       = ( |\xi| \wedge |\eta|)^\alpha,
      \\
      \rhoCanon(\xi,\eta)
                  &=&
                      ( |\xi| \vee |\eta|)^\alpha - ( |\xi| \wedge
                      |\eta|)^\alpha
                      .
    \end{eqnarray*}
    Note, that the positive and negative half axes now
    parallel and point into the same direction. 
  \item \label{ex:cauchy}
    If $c_\alpha =2$, then the $\signXi$ is undetermined. For
    instance, consider the family of Cauchy distributions
    ${\rm Cauchy}(\lambda)$, $\lambda\in \RR$, with densities 
    $$f_\lambda(x) =  \frac1{|\lambda| \pi } \cdot \frac1{1 +
      (x/|\lambda|)^2},
    \qquad x\in\RR, \lambda \not=0
    .
    $$ 
  %  Let $X_\lambda \sim {\rm Cauchy(\lambda)}$ and $\lnorm 
  %  X_\lambda\rnorm = |\lambda|$.
    Then, $\lambda \mapsto
 %  \lnorm   X_\lambda\rnorm =
    | \lambda|$ defines an entropy on $\RR$
    that models the addition of independent Cauchy variables.
  %  As $\lambda X_1 \sim X_\lambda$, the operator
 %   $\plus$ is  defined as the ordinary addition.
    All further discussions are identical to the second case of the
    first example
    with $\alpha=1$ instead of $\alpha<1$.

 % This case is treated in a more general setting in  Proposition 
%  \ref{lemma:scalar} since $\lnorm . \rnorm$ is scale-invariant 
 % according to Definition \ref{def:scale.invariant} below.

  \end{enumerate}
   We learn from the above specifications, that the choice of the
   parameter space $\GF $ is crucial.
   %and that extensions (even from
%   $[0,\infty)$ to $\RR$) are non-trivial, in general.
   See Section \ref{sec:trafo} for a general treatment of the situations
   discussed in this example.
   See also the discussion on p.~1246 in
   \cite{mccullagh2002statistical}, where  $\RR$ is considered as
   the parameter space for the scale and which is afterwards
   restricted to $[0,\infty)$ for the parameter identification.

 \end{example}

% ad  Now, the positive and negative half axes
%    parallel and point into the same direction. 
 %    Propositions \ref{prop:construction1} and
%    \ref{lemma:reverse.scalar} below show very generally, that scalar products
%    are narrowly defined on  half axes, but only loosely defined
%    between half axes.

\begin{example}[Mutual information]\label{ex:mutual}
  Let $\lnorm \cdot \rnorm$ be the Shannon entropy.
  The mutual information is
  defined  as
  \begin{eqnarray*}
    I(X, Y) &=&
%                \sum_x \sum_y p_{XY}(x, y) \log \left(
 %   \frac{p_{X,Y}(x,y)}{p_{X}(x)p_{Y}(y)} \right)
  % \\& =&
    %       =
           \lnorm p_X \rnorm + \lnorm p_Y \rnorm - \lnorm p_{XY} \rnorm,
 %          \sum_x \sum_y p_{XY}(x, y) \log 
%  p_{XY}(x,y) - 
%  \sum_x  p_{X}(x)\log  p_{X}(x)
%    - 
%        \sum_y  p_{Y}(y) \log p_{Y}(y)
    %     \\&=&\rho(X, Y)
   \end{eqnarray*}
 where $p_{XY}$ is the joint probability of the discrete random
  variables $X$ and $Y$ and $p_{X}$ and $p_{Y}$ are the respective margins.
%  So, $I$ is of the form of $\rho$ although an explicit algebraic ansatz is  missing, in general.
  We define the operator $\plus$ as
  $p_X \plus  p_Y = p_{XY}$, assuming that $p_{XY}$ is unique
  for all $p_X$ and $p_Y$ in the given setup. We further assume that
  $p_{XX}(z,z)=p_X(z)$.
  % and that all $p_x$ are not too heavy tailed  so that $M_\GF $ is finite.
  Then,  we have $\signXi = -1$, $m_\GF  = 1/2$, $\aopt=2$,
  and the mutual
  information equals the canonical hemi-scalar product $\langle \cdot,
  \cdot \rangle_2 \equiv \langle \cdot,
  \cdot \rangle_\assoc$.
  \note{
  $m_\GF  \le 1/2$ as $\lnorm p_{XX} \rnorm / (2 \lnorm p_{X} \rnorm )
  =1/2$
  Further, $m_\GF  \ge 1/2$ as  $$\lnorm p_{XY} \rnorm \ge \max\{ \lnorm
  p_{X} \rnorm, \lnorm p_{Y} \rnorm\} / 
  (\lnorm  p_{X} \rnorm + \lnorm p_{Y} \rnorm) \ge \inf _{y>0}
  \max\{1,y\}/(1+y)  \ge 1/2
  $$
  }
  The canonical hemi-metric 
  \begin{eqnarray*}
    \rhoCanon(p_X, p_Y)
    = 2  \lnorm p_{XY} \rnorm -  \lnorm p_{X}
        \rnorm -  \lnorm p_{Y} \rnorm
  \end{eqnarray*}
  is indeed a metric, called variation of information
  \citep{meilua2007comparing}.
  Since $M_\GF =1$, the mutual information $I$ might also be considered as a 
   hemi-metric (Remark \ref{bem:infty}).
   
\iffalse
https://en.wikipedia.org/wiki/Mutual_information:    $I(X, Y) <=
H(Y)$
\fi

 \end{example}

\begin{example}[Poisson distribution]
  We construct a bivariate Poisson distributed random vector as follows.
  Let $\lambda,\mu>0$, $a,b\in[0,1]$, $\nu = \min\{a\lambda, b\mu\}$
  and let
  \begin{eqnarray*}
   X_0 &\sim &\Pois(\lambda -\nu)\\
   Y_0 &\sim &\Pois(\mu -\nu)\\
   Z & \sim & \Pois(\nu)
  \end{eqnarray*}
  be independent variables. Let $X = X_0 + Z$ and $Y = Y_0 + Z$. We
  consider $(X, Y)$. If $\lnorm \cdot \rnorm$ is the Shannon entropy,
 % If $a=b=1$, then we are in a special case of Example \ref{ex:mutual}.
  numerical calculations suggest that $\signXi=-1$ for all $a,b\in
  [0,1]$, so that the mutual information is the associated hemi-scalar
  product. % also for $ab\not=1$. 

  Now, let the entropy  $\lnorm \cdot \rnorm$ be defined as $\lambda$
  for a univariate Poisson random variable with expectation $\lambda$.
  A natural extension
  of this entropy to our bivariate
  Poisson random vector is the sum of the entropies of the included
   independent variables $X_0$, $Y_0$, and $Z$, i.e.
  $$
  \lnorm (X, Y) \rnorm := \lnorm X \plus Y \rnorm :=  \lnorm X_0 \rnorm + \lnorm Y_0 \rnorm +
  \lnorm Z \rnorm
  = \lambda + \mu - \nu
  , 
  $$ 
  so that  $\signXi = -1$ 
  for all $a,b\in[0,1]$ and $\lambda, \mu>0$.
  Thus, the associated
  hemi-scalar product equals
  $$
  \langle X, Y \rangle_\assoc = \nu = \min\{a\lambda, b\mu\}
  .
  $$
  Since
  $m_\GF  = 1 - \min\{a,b\}/2$, we have
  $\aopt = 2 / \min\{a,b\}$ for $ab>0$.
  \note{
  $$
  \min_\lambda\frac{2\lambda - \lambda \min\{a,b\}}{2 \lambda} = 1 -
  \min\{a/b\}/2 
  $$
  UND NICHT:
  $m_\GF  = {ab}/({a + b})$, we have
  $\aopt = (a+b) / (a+b -ab)$.
  da
  $$
  \min \frac{\lambda + \mu -\nu}{\lambda + \mu}
  = 1 - \max_{\lambda,\mu} \frac{\min\{a\lambda, b\mu\}}{\lambda + \mu}
  = 1 - \max_\mu  \frac{\min\{a, b\mu\}}{1 + \mu}
  = \frac{ab}{a + b}
  $$

  }

\end{example}

\begin{example}[Shannon entropy for dependable systems]
  \label{ex:shannon.dependable}
%https://en.wikipedia.org/wiki/Akaike\_information\_criterion !!!!

  We continue Example \ref{ex:shannon} and 
  let $q_x\in [0,1]$, $x\in \alpha$,
  which might be interpreted
  as the probability that a letter is trustably transferred
  or that a model value or a measured value is reliable.
  If a value is not dependable, then it  is seen as being
  drawn from some other distribution. We
  are not interested in the other distribution, nor whether there is a
  random coincidence between the true value  and the value from the
  other distribution.
  %Let $q_x$ be the conditional probability that
  %% a value is dependable and
  We do not follow the lines of the noisy-channel coding theorem, but
  follow the definition of the cross-entropy and define
  \begin{eqnarray}
    \label{eq:q}
  \lnorm (p, q) \rnorm_u = -u(p,q) \sum_x q_x p_x \log p_x,
  \quad
  p_x, q_x\in[0,1], \sum_x p_x =1   
  \end{eqnarray}
  for some $u(p,q)>0$.
  A pair of values shall be dependable if both components are dependable.
 For $q\not \equiv 0$ and $\tilde q \not \equiv 0$ 
 we define
 $(\tilde p,\tilde q) \circ ( p, q )$ as the cross-product
 $\tilde p \times  p, \tilde q \times q$,
  so that we get
  for independent values $(p,q)$ and $(\tilde p, \tilde q)$
  with $q\not \equiv0$ and $\tilde q \not \equiv 0$ 
  \begin{eqnarray*}
   \frac{ \lnorm ( \tilde p,\tilde q) \circ ( p, q )) \rnorm_u}{u(\tilde p \times  p, \tilde q \times q)}
    = 
         -         
         \sum _{x,y} (\tilde q_yq_x ) \tilde p_y p_x \log(\tilde p_yp_x )
    =\frac{ 
           \sum _{y}  \tilde q_y \tilde p_y}{ u(p,q)}\lnorm (p, q) \rnorm_u
           +
           \frac{ 
           \sum _{x} q_x p_x }{u(\tilde p,\tilde q)}
           \lnorm (\tilde p, \tilde q) \rnorm_u
  \end{eqnarray*}
  
  Hence,
  \begin{eqnarray*}
    \lnorm ( \tilde p,\tilde q) \circ ( p, q )) \rnorm_u
  %  \lnorm ( \tilde p \times p,  \tilde q\times q) \rnorm_u
         & =& \lnorm (\tilde p, \tilde q) \rnorm_u + \lnorm (p, q) \rnorm_u
   \end{eqnarray*}
  
  for arbitrary $(p,q)$ and $(\tilde p, \tilde q)$
  if and only if $u(p,q) =  1 / \sum _{x} q_x p_x  $
  for  $\sum _{x} q_x p_x\not=0$. 
  In particular, 
  $\lnorm (p,q) \rnorm =\lnorm (p,aq) \rnorm$
  for all $a>0$, that is,
  $\lnorm . \rnorm$ attributes entropy only
  to the shape of $(q_x)_x$
  and $q$ may take arbitrary non-negative values.
  Note that  $r_x := q_x p_x /  \sum _{y} q_y p_y$
  can be interpreted as the conditional probability of a value given
  it is dependable.
  %, and $ \lnorm \cdot \rnorm$ can be  interpreted as a cross-entropy.

  An appropriate definition of the binary operator $\plus$
  is not clear cut. We consider the following approach.
   Let
    \begin{eqnarray}\label{eq:KLplus}
     (p, q)  \plus  (\tilde p, \tilde q )
      & =&
      % \left(p, \frac{\tilde q \tilde p + q p}{2p}\right)
           \left(p, q + \tilde q \frac {\tilde p} p \right)
           ,
    \end{eqnarray}
    i.e., the operator $\plus$  allows an
    interpretation of updating model $p$
    with reliability $q$ by data $\tilde p$
    with reliability $\tilde q$.
    We get
    $$
    \lnorm (p, q)  \plus  (\tilde p, \tilde q  ) \rnorm_u
    =\frac{-1}{\sum_x ( q_x p_x +  \tilde q_x \tilde p_x )}
    \sum_x ( q_x p_x +  \tilde q_x \tilde p_x ) \log p_x
    $$
    In particular,
    \begin{eqnarray*}
      \label{eq:1}
   \lnorm (p, q)  \plus  (p, q  ) \rnorm_u
    =  \lnorm (p, q)\rnorm_u
    ,     
    \end{eqnarray*}
     which underlines the above interpretation as twice the same
    information with the same uncertainty does not give any additional information.
    Now, $\signXi=-1$, $m_\GF  = 0$, $\aopt=1$,
     \begin{eqnarray*}
        \langle  (p, q), (\tilde p, \tilde q)
       \rangle_u& =& \lnorm (p, q)  
                     \rnorm_u
                     +\lnorm  (\tilde p, \tilde q  ) \rnorm_u
                     - \lnorm (p, q)  \plus  (\tilde p, \tilde q  )
                     \rnorm _u,
       \\
       \rhoCanon^{u}((p, q), (\tilde p, \tilde q) )
              &=& \lnorm (p, q)  \plus  (\tilde p, \tilde q  )
                  \rnorm _u
                  .
     \end{eqnarray*}                                          
     Let $\bf 0$ and $\bf 1$ be sufficiently long
     vectors of $0$'s and $1$'s, respectively. Then, we get as a
     special case that 
     \begin{eqnarray}\label{eq:KL.ansatz}
        \langle  (p, {\bf 0}), (\tilde p, {\bf 1})
       \rangle_u& =&   \sum_x\tilde  p_x \log  \frac{p_x}{ \tilde p_x}
                     = -d_{KL}(\tilde  p, p)
                     ,
     \end{eqnarray}              
     where $d_{KL}$ denotes the  Kullback-Leibler divergence.
%     Two remarks remain. 
%     First, this example also shows that the Kullback-Leibler divergence
 %    is an entropy according to Definition \ref{def:entropy} for
                     %                      the concatenation semigroup.
     \label{ex:mle}
     The maximum likelihood approach for independent and identically
     distributed
     observations
     \citep{akaike73} 
   minimizes
   $$-\sum_x \tilde p_x \log
   p_x^{(\theta)} = \rhoCanon^u( (p^{(\theta)}, {\bf 0}), (\tilde p, {\bf 1}))
   $$
   where $p_x^{(\theta)}$ is the
   probability of observation $x$ in model $\theta$, and $\tilde p_x$
   equals the empirical frequency. Hence, the above ansatz allows a Bayesian
   interpretation of the maximum likelihood, namely that we have save data $(\tilde p, {\bf 1})$
   and models $(p^{(\theta)}, {\bf 0})$ with maximal uncertainty.

      \end{example}

  \begin{example}[Kullback-Leibler divergence]
% [Statistical interpretation of the  Shannon entropy for dependable systems]
    \label{ex:leibler}\label{ex:KL}
    While many other important quantities, such as the
    symmetric difference (Example \ref{ex:sets}), the mutual
    information and the
    variation of information (Example \ref{ex:mutual}) appear
    naturally in our approach, a satisfying approach to the
    Kullback-Leibler divergence has not been found yet.
   % We consider the approach in Example \ref{ex:shannon.dependable}
    %as poor.
    Lack of known alternatives, we refine Equation
    \eqref{eq:KL.ansatz}.
%    \\  Equation \eqref{eq:KL.ansatz}
    The latter
    suggests to restrict the space
    of $(p,q)$ to pairs of the form $(p, {\bf 0})$ on the left hand side of
    \eqref{eq:KLplus}  and to pairs of the form $(p,
    {\bf 1})$
    on the right hand side, so that
    the map \eqref{eq:cond2.comparable} must be replaced by
    $$
    \xi \mapsto  \lnorm (\xi, {\bf0})  \plus  (\xi, {\bf1}  )\rnorm_u
    -  \lnorm (\xi, {\bf0}) \rnorm_u - \lnorm (\xi, {\bf1}  )\rnorm_u
    .$$
    This function is identically $0$, so that its sign is
    undetermined. We define it as $+1$, so that
  \begin{eqnarray*}
    \langle p, \tilde p\rangle_\assoc
    &:= & -\langle  (p, {\bf 0}), (\tilde p,
          {\bf1})\rangle_u = d_{KL}(\tilde p,   p)
          .
    \end{eqnarray*}     
   As $m_\GF =1$, the Kullback-Leibler divergence equals also the
  hemi-metric $\rho_\infty$.
  \end{example}

  \begin{example}[Tsallis entropy]\label{ex:tsallis}
    The Tsallis entropy
    \citep{tsallis88}
    generalizes the Shannon entropy and 
    is defined  for a discrete probability distribution $p$
    as
    $$
    \lnorm p \rnorm = \frac{k}{q-1} \left(1- \sum_i p_i^q\right)
    .$$
    Here, $q\in \RR$ and $k>0$. % Let $q \not =1$ in the following.
    We define $p \plus p'$ as $p
    \times p'$, so that
    $$
    \lnorm p \plus p' \rnorm = 
    \lnorm p \rnorm +
    \lnorm  p' \rnorm - \frac{q-1}k
    \lnorm p  \rnorm \lnorm p' \rnorm 
    .$$    
    Hence, the common point of view with \cite{tsallis88} is that we
    both start with independent systems. But we assume  that they interact,
    when they are joined.
    The same considerations apply also to the Sharma-Mittal entropy,
    $$
    \lnorm p \rnorm = \frac{k}{q-1} \left(1- \left(\sum_i p_i^q\right)^{1/k}\right)
    .$$
    Here, $q,k\in\RR$ \citep{sharma1975new}.

%    Discussing the function $(a,b)\mapsto (1-ab)/(2-a-b)$ results in
 %   $$
  %  \rho_\infty(p, p') = \frac{q-1}k  \lnorm p \rnorm +
   % \lnorm  p' \rnorm
   % $$
    % for all $\rho \not= 1$.
  \end{example}

\begin{example}[Tichonov regularization]\label{ex:tichonov}
  Consider the orthogonal design case of a
  linear model $Y = X  \beta + \varepsilon$ \citep{tibshirani96}.
  That is, $X \in \RR^{n\times p}$, $\rank X = p$,
  $X^\top X$ is the identity,
  $\beta \in\RR^p$, and $\varepsilon \in
  \RR^n$ is a vector of errors.
  Let $y$ be a realization of $Y$.
 For
  $\lambda>0$, the Tichonov regularization
  $$
  \| y - X \beta \|^2 + \lambda \| \beta \|^2 = \min_\beta !
  $$
  is equivalent to
  $$
  \frac{-1}{1 + \lambda} \|y + X \beta \|^2 +
  \left( 1- \frac{-1 }{1 + \lambda}\right) (\| y\|^2 +  \| X \beta \|^2)
  =\min_\beta !
  .$$
  Hence, in the general setting,
  we may interpret the minimization of the hemi-metric
  $q_a$, $a \in \Xi \cap(-\infty,0)$, as a generalized Tichonov regularization.
  Or reversely, the Tichonov regularization can be interpreted as a
  non-canonical
  choice of $a\in\Xi$ within  a pre-Hilbert space framework,
  cf.\ Example \ref{bsp:hilbert}.
  Note that, in contrast to the maximum likelihood principle in
  Example \ref{ex:mle}, a Bayesian interpretation of the
  general Tichonov regularization does not seem to be straightforward
  from our entropy point of view given here.
\end{example}

\begin{example}[Bivariate Gaussian]
  Let $X$ be a standard Gaussian random variable
  and the entropy of a vector be the sum of the entropies of the components.
Then, the random vectors  $(1, -1)^\top X$ and $(1,1)^\top X$ are
jointly multivariate Gaussian, fully dependent, but uncorrelated
according to Definition \ref{def:scalarproduct}.
Note that the standard notion of uncorrelatedness is defined only for
scalar random variables. The generalized definition still implies that
two jointly bivariate normal, scalar Gaussian random variables are uncorrelated if
and only if they are independent.
\end{example}

\note{

\begin{example}[Thermodynamic formalism]
  The thermodynamic formalism is given by
  \begin{eqnarray}\label{eq:TF}    
  S(\sigma) - \sigma(U) = \max_\sigma   !
  \end{eqnarray}
   where $\sigma$ denotes a discrete probability distribution,
  $$
  S(\sigma) = -\sum_x \sigma_x \log \sigma_x
  $$
  and
  $$
  \sigma(U) = -\sum_x \sigma_x \log e^{-U(x)}
  $$
  for a given energy function $U$.
  cf.\ \cite[p.~3]{ruelle}.
  Hence, \eqref{eq:TF} is identical to minimizing
  the Kullbach-Leibler divergence.
  \footnote{Check!}

  NOTE: ``dependable'' construction is nothing else than rejection
  sampling, where rejected is marked as ``non-dependable''.
  
\end{example}

\begin{example}[Permutation group]
  ??? Fehlstandszahl/Inversionszahl als Entropie-Mass??
  Fehlstandszahl hat eine nette Rekursionsformel.
\end{example}
}

 \begin{example}[Tropical scalar product]
   For a suitable linear space $V$  of non-negative functions, let
   $\lnorm f \rnorm = \sup f $ and $\plus$ be the usual addition.
   Then $m_\GF=1/2$, $M_\GF=1$, and $\signXi$ is undefined. We set $\signXi=+1$.
   Then, the associated scalar product
   $$
   \langle f, g \rangle_\assoc =\sup (f + g)-   \sup f - \sup g 
   $$
   differs substantially from the $(\min, +)$-product \citep{maslov1987new},
     which is
   $$
   \langle f, g \rangle_{\min,+} =
   \inf (f + g)
   .$$
   Now, let $\lnorm f \rnorm = \inf f$. Then, $\Xi = [0,\infty)$ and
   the tropical scalar product
   $
   \langle f, g \rangle_{\min,+} = \rho_1(f, g)
   $
   is regarded here as a hemi-metric.
 \end{example}

\subsection{Properties}

The next proposition shows what remains from the known properties of
a metric and a scalar product in the framework of a hemi-metric $\rho_a$ and
a hemi-scalar product $\langle \cdot, \cdot \rangle$. By definition,
$\rho$ is non-negative and Equation \eqref{eq:scalar.rho} shows that
the entropy can also be interpreted as the entropy difference to 
a deterministic system.
The two familiar properties that the triangular inequality holds
and that
the distance to itself is $0$ are completely lost. Proposition
\ref{prop:scalar2} shows both that under additional assumptions the
latter property can be regained and that this property is exceptional.
For the triangular inequality no analogue has been found yet in this
general setup.  Equation \eqref{eq:rho.scaling} in Proposition
\ref{prop:scalar2}  shows that a weak scaling property of the
hemi-metric can be recovered under additional assumptions.

The definition of the hemi-scalar product implies that $\langle \xi,
\xi \rangle$ is always non-negative, cf.~Equation \eqref{eq:sp.nonneg}, and
that the hemi-scalar product is always $0$ whenever a deterministic
system is involved. With additional assumptions, we get that the
hemi-scalar product with itself is $0$ only for the deterministic
systems, cf.~Equation \eqref{eq:scalar.Gs}.
However, linearity in the common sense or even
bilinearity seem to be a luxury asset. The weak form of linearity
manifested in Equation \eqref{eq:sp.linear} is sufficient to gain
important results in Section~\ref{sec:constructions}. Under additional
assumptions, a further partial linear property, cf.~Equation \eqref{eq:scalar.scaling},
might be gained.
Two other, general properties of the hemi-scalar product are
surprisingly nice. First, the hemi-scalar product
can always be recovered from the canonical hemi-metric, cf.~Equation 
\eqref{eq:sp.recover}. (The reverse is unfortunately not true 
as Inequality \eqref{eq:xiGmxi} shows.)
Second, (one-sided) bounds for $\langle \xi, \eta\rangle / \lnorm \xi \circ \eta
\rnorm$  always exist, cf.~Equations \eqref{eq:sp.CSa} and \eqref{eq:sp.CS2}.

\begin{proposition}\label{lemma:scalar}
  Let $(\GF,\circ,  \plus, \lnorm .\rnorm)$ be a
  entropy-driven  hemi-group with comparable elements.
  Let $a \in \Xi \setminus\{0\}$ and denote by
  $\langle \cdot, \cdot\rangle_*$ any of $\langle \cdot, \cdot\rangle$, $\langle
  \cdot, \cdot\rangle_2$, or $\langle \cdot, \cdot\rangle_\assoc$.
  Then, for all  $\xi,\eta,\nu \in \GF$ and $ \varepsilon \in \detsys$, we have
  \begin{eqnarray}
    \nonumber
    \langle \xi , \eta \rangle_*
    &= & \langle \eta, \xi \rangle_*
         , \mbox{ iff }
         \lnorm \xi \plus \eta \rnorm =  \lnorm \eta \plus \xi \rnorm
     ,\\
   \label{eq:sp.nonneg}
    \langle  \xi , \xi  \rangle_* &\ge& 0
 ,\\ \label{eq:sp.linear}
      \langle  \xi \plus \eta , \nu  \rangle_*  &=
       &
          \langle  \xi , \eta \plus \nu  \rangle_* - \langle  \xi, \eta
      \rangle_* +
         \langle  \eta, \nu \rangle_*
    ,\\ \label{eq:R0=0} 
    \langle \xi , \varepsilon \rangle_*
    &=&0 
     %
    %
    %
             %                           
%    ,\\\nonumber
%    -\lnorm \xi \circ \eta \rnorm
%    &\le& \signXi \langle  \xi , \eta \rangle_\assoc   \; \le\;
%    \lnorm \xi \plus \eta \rnorm
           ,\\\label{eq:sp.CS0a}
  \sign \Xi    \langle \xi, \eta \rangle_\assoc & \ge & -  \frac{\lnorm
                                              \xi \circ \eta
                                              \rnorm}{\sup\Xi}
         ,\\\label{eq:sp.CS0b}
   \sign \Xi  \langle \xi, \eta \rangle_\assoc
    & \le &- \frac{ \lnorm
            \xi \circ \eta \rnorm}{\inf \Xi},
              \quad \mbox{if }\inf \Xi \not = 0
           ,\\\label{eq:sp.CS0aa}
    \sign \Xi  \langle \xi, \eta \rangle_\assoc & \ge &  \frac{\lnorm
                                              \xi \plus \eta
                                              \rnorm}{1-\sup\Xi}
                                 \quad \mbox{if }    \sup  \Xi \not = 1
             ,\\\label{eq:sp.CS0c}
    \sign \Xi   \langle \xi, \eta \rangle_\assoc & \le &  \frac{\lnorm
            \xi \plus \eta \rnorm}{1- \inf \Xi}
 ,\\ \nonumber
    \rho_a( \xi , \eta )  &= & 
        \rho_a(\eta, \xi )
         , \mbox{ iff }
         \lnorm \xi \plus \eta \rnorm =  \lnorm \eta \plus \xi \rnorm         
   ,\\\label{eq:scalar.rho}
    \rho_a(\xi, \varepsilon)  &= & % \langle \xi, \xi\rangle_2=
                            \lnorm \xi
                           \rnorm
          .
  \end{eqnarray}
\end{proposition}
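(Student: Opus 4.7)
The plan is to derive every item from two algebraic inputs: the hemi-group identity $\lnorm \xi\circ\eta\rnorm=\lnorm \xi\rnorm+\lnorm \eta\rnorm$ (which is automatically symmetric in $\xi,\eta$), and the definitions $\langle \xi,\eta\rangle_\assoc=\signXi(\lnorm \xi\plus\eta\rnorm-\lnorm \xi\circ\eta\rnorm)$ and $\rho_a(\xi,\eta)=a\lnorm \xi\plus\eta\rnorm+(1-a)\lnorm \xi\circ\eta\rnorm$. Since $\langle\cdot,\cdot\rangle=|\aopt|\,\langle\cdot,\cdot\rangle_\assoc$ and $\langle\cdot,\cdot\rangle_2=\langle\cdot,\cdot\rangle/2$ are positive scalar multiples of $\langle\cdot,\cdot\rangle_\assoc$, all three variants $\langle\cdot,\cdot\rangle_*$ share the sign-level assertions, so it suffices to work with $\langle\cdot,\cdot\rangle_\assoc$.

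For the two symmetry equivalences, I would observe that both $\langle\xi,\eta\rangle_*-\langle\eta,\xi\rangle_*$ and $\rho_a(\xi,\eta)-\rho_a(\eta,\xi)$ equal a positive scalar (respectively $\signXi$ and $a$, so the latter requires $a\ne 0$) times $\lnorm \xi\plus\eta\rnorm-\lnorm \eta\plus\xi\rnorm$; the $\circ$-contribution drops out by the entropy-level symmetry of $\circ$. For \eqref{eq:sp.nonneg}, $\langle\xi,\xi\rangle_\assoc=\signXi(\lnorm \xi\plus\xi\rnorm-2\lnorm \xi\rnorm)$, non-negative by the very definition of $\signXi$ in \eqref{eq:cond2.comparable}. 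For \eqref{eq:R0=0} and \eqref{eq:scalar.rho}, I use that $\varepsilon\in\detsys\subset\GF_0$ so $\lnorm \varepsilon\rnorm=0$, whence $\lnorm \xi\plus\varepsilon\rnorm=\lnorm \xi\rnorm$ by \eqref{eq:GFGS} and $\lnorm \xi\circ\varepsilon\rnorm=\lnorm \xi\rnorm+0=\lnorm \xi\rnorm$; these two facts make the $\plus$- and $\circ$-terms coincide and give both identities immediately. For \eqref{eq:sp.linear}, I expand both sides: after cancelling all single-element entropies via the hemi-group identity, the equation collapses to the tautology $\signXi(\lnorm \xi\plus\eta\plus\nu\rnorm-\lnorm \xi\plus\eta\rnorm-\lnorm \nu\rnorm)$ on both sides, using implicitly that the triple $\plus$-expression is unambiguously defined.

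For the four Cauchy--Schwarz-type bounds \eqref{eq:sp.CS0a}--\eqref{eq:sp.CS0c}, multiply each by $\signXi$ (with $\signXi^2=1$), which turns the left-hand side into $\lnorm \xi\plus\eta\rnorm-\lnorm \xi\circ\eta\rnorm$. Dividing by $\lnorm \xi\circ\eta\rnorm$ in the first two and by $\lnorm \xi\plus\eta\rnorm$ in the last two, each inequality reduces to a one-sided case of
\[
m_\GF \le \frac{\lnorm \xi\plus\eta\rnorm}{\lnorm \xi\circ\eta\rnorm} \le M_\GF,
\]
which is the definition of $m_\GF,M_\GF$. The conversion uses the identities $1-1/\sup\Xi=m_\GF$ and $1-1/\inf\Xi=M_\GF$, which follow from $\sup\Xi=(1-m_\GF)^{-1}$, $\inf\Xi=(1-M_\GF)^{-1}$; the side conditions $\inf\Xi\ne 0$ and $\sup\Xi\ne 1$ are exactly what is needed to keep these reciprocals finite. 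The main obstacle is bookkeeping of the degenerate cases: when $\lnorm \xi\circ\eta\rnorm=0$ then also $\lnorm \xi\plus\eta\rnorm=0$ (by $0\le m_\GF\le M_\GF$ and \eqref{eq:GFGS}), so the bounds hold trivially; when the relevant endpoint of $\Xi$ is infinite or zero, the corresponding inequality either is excluded by hypothesis or degenerates to $\pm\infty$ on the right-hand side and must be interpreted accordingly. Apart from this case-handling and the implicit associativity of $\plus$ required in \eqref{eq:sp.linear}, the proof is purely mechanical substitution.
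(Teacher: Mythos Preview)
Your proof is correct and, for all items except \eqref{eq:sp.CS0a}--\eqref{eq:sp.CS0c}, identical to the paper's (which simply says ``all other assertions follow immediately from the definitions''). For the four bounds you take a mildly different but equivalent route: the paper starts from $\rho_a(\xi,\eta)\ge 0$ for $a\in\Xi$, rewrites this as $-a\,\signXi\langle\xi,\eta\rangle_\assoc\le\lnorm\xi\circ\eta\rnorm$ and $(1-a)\,\signXi\langle\xi,\eta\rangle_\assoc\le\lnorm\xi\plus\eta\rnorm$, and then sends $a$ to the endpoints of $\Xi$; you instead unpack $\signXi\langle\xi,\eta\rangle_\assoc=\lnorm\xi\plus\eta\rnorm-\lnorm\xi\circ\eta\rnorm$ and reduce directly to $m_\GF\le\lnorm\xi\plus\eta\rnorm/\lnorm\xi\circ\eta\rnorm\le M_\GF$ via the identities $1-1/\sup\Xi=m_\GF$, $1-1/\inf\Xi=M_\GF$. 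The two arguments are the same computation read in opposite directions.

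Two small slips to fix. First, in the symmetry item the scalar multiplying $\lnorm\xi\plus\eta\rnorm-\lnorm\eta\plus\xi\rnorm$ is \emph{nonzero}, not positive ($\signXi$ may be $-1$); nonzero is all you need. Second, your justification of the degenerate case $\lnorm\xi\circ\eta\rnorm=0\Rightarrow\lnorm\xi\plus\eta\rnorm=0$ via ``$0\le m_\GF\le M_\GF$ and \eqref{eq:GFGS}'' does not work as stated, since $m_\GF,M_\GF$ are defined only over pairs with $\lnorm\xi\circ\eta\rnorm>0$ and $\detsys$ can be strictly smaller than $\GF_0$. The clean fix is exactly the paper's viewpoint: under the hypothesis $\inf\Xi\ne 0$ of \eqref{eq:sp.CS0b} some $a<0$ lies in $\Xi$, and then $0\le\rho_a(\xi,\eta)=a\lnorm\xi\plus\eta\rnorm$ forces $\lnorm\xi\plus\eta\rnorm=0$. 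Your observation that \eqref{eq:sp.linear} tacitly uses entropy-level associativity of $\plus$ is correct and worth keeping.
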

\begin{proof}
  We show only Equations \eqref{eq:sp.CS0a}--\eqref{eq:sp.CS0c};
  all other assertions follow immediately from the definitions.
  We have for all $a\in \Xi$ and $\xi,\eta\in \GF$ that
  $$
  0\le\lnorm \xi \circ \eta \rnorm +  a (\lnorm \xi \plus \eta \rnorm
  - \lnorm \xi \circ \eta \rnorm)
  =  \lnorm \xi \plus \eta \rnorm - (1-a)( \lnorm \xi \plus \eta \rnorm - \lnorm \xi \circ \eta \rnorm)
  $$
  i.e., for all $a\in \Xi$, we have
  \begin{eqnarray*}
    - a  \signXi \langle \xi , \eta \rangle_\assoc &\le&\lnorm \xi \circ \eta \rnorm,
    \\
    (1-a) \signXi  \langle \xi , \eta \rangle_\assoc &\le& \lnorm \xi \plus
                                                           \eta \rnorm
                                                           .
  \end{eqnarray*}
  Inequalities \eqref{eq:sp.CS0a}--\eqref{eq:sp.CS0c} follow
  immediately. Note that equality in \eqref{eq:sp.CS0a} and \eqref{eq:sp.CS0aa}
  is reached
  if
  and only if $\lnorm \xi \plus \eta \rnorm = m_G \lnorm \xi \circ
  \eta \rnorm$, and in \eqref{eq:sp.CS0b} and \eqref{eq:sp.CS0c} if
  and only if $\lnorm \xi \plus \eta \rnorm = M_G \lnorm \xi \circ
  \eta \rnorm$.
\endproof\end{proof}

\begin{proposition}\label{cor:scalar}
  Let $(\GF,\circ,  \plus, \lnorm .\rnorm)$ be a
  entropy-driven  hemi-group with comparable elements.
  Assume that $\aopt$ is finite.
  Then, for all  $\xi,\eta \in \GF$, we have
  \begin{eqnarray}
    \label{eq:sp.CSa}
    \langle \xi, \eta \rangle
    & \le &  \lnorm
            \xi \circ \eta \rnorm
         ,\\\label{eq:sp.CSb}
    \langle \xi, \eta \rangle
    & \le &
   \frac{\aopt}{ \aopt-1}\lnorm \xi \plus \eta \rnorm,
            \quad \mbox{if } \aopt\not\in [0,1]
             ,\\\label{eq:sp.CS2}
     \langle \xi, \eta \rangle
    & \ge & \frac{\aopt}{\sup \Xi}\lnorm
            \xi \circ \eta \rnorm,
             \quad \mbox{if } \aopt < 0
     ,\\\label{eq:sp.recover}
    \langle \xi, \eta \rangle
    &=& \rhoCanon(\xi, \varepsilon) +  \rhoCanon( \eta, \varepsilon) - \rhoCanon(\xi, \eta)
   \end{eqnarray}

\end{proposition}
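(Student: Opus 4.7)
The plan is to deduce all four assertions from Proposition \ref{lemma:scalar} by translating the associated scalar product into the canonical one via $\langle \xi,\eta\rangle = |\aopt|\,\langle \xi,\eta\rangle_\assoc$ and by locating $\aopt$ as an endpoint of $\Xi$. By Definition \ref{def:comparable}, $\aopt = a_{\signXi}$, which coincides with $\inf \Xi$ when $\signXi = +1$ (forcing $\aopt \le 0$, since $a_{+1} = 1/(1-M_\GF)$ and $M_\GF \ge 1$) and with $\sup \Xi$ when $\signXi = -1$ (forcing $\aopt \ge 1$, since $a_{-1} = 1/(1-m_\GF)$ and $m_\GF \in [0,1]$). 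Consequently $|\aopt| = -\signXi\cdot\aopt$, which is exactly the sign correction needed below to turn the inequalities \eqref{eq:sp.CS0a}--\eqref{eq:sp.CS0c} for $\langle\cdot,\cdot\rangle_\assoc$ into inequalities for $\langle\cdot,\cdot\rangle$.

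For \eqref{eq:sp.CSa} I would split on $\signXi$. When $\signXi = -1$ so that $\aopt = \sup \Xi > 0$, inequality \eqref{eq:sp.CS0a} rearranges to $\langle \xi,\eta\rangle_\assoc \le \lnorm \xi \circ \eta \rnorm / \aopt$; multiplying by $|\aopt| = \aopt$ yields the claim. When $\signXi = +1$ so that $\aopt = \inf \Xi < 0$ (the boundary case $\aopt = 0$ reduces to a triviality), inequality \eqref{eq:sp.CS0b} gives $\langle \xi,\eta\rangle_\assoc \le -\lnorm \xi \circ \eta \rnorm / \aopt$; multiplying by $|\aopt| = -\aopt$ again produces \eqref{eq:sp.CSa}. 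The same bookkeeping handles \eqref{eq:sp.CSb}: when $\signXi = -1$ one uses \eqref{eq:sp.CS0aa} (whose hypothesis $\sup \Xi \ne 1$ is equivalent to $\aopt \ne 1$, which is ensured by $\aopt \notin [0,1]$), while when $\signXi = +1$ one uses \eqref{eq:sp.CS0c}; in both cases the constant collapses to $\aopt/(\aopt - 1)$ after multiplication by $|\aopt|$. For \eqref{eq:sp.CS2}, the hypothesis $\aopt < 0$ forces $\signXi = +1$ and $\aopt = \inf \Xi$, so \eqref{eq:sp.CS0a} reads $\langle \xi,\eta\rangle_\assoc \ge -\lnorm \xi \circ \eta \rnorm / \sup \Xi$, and multiplication by $|\aopt| = -\aopt > 0$ inverts the sign of the right-hand side to give exactly $\aopt\,\lnorm \xi \circ \eta \rnorm / \sup \Xi$.

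Finally, for \eqref{eq:sp.recover}, the plan is to combine three already-established facts: by \eqref{eq:scalar.rho} one has $\rhoCanon(\xi, \varepsilon) = \lnorm \xi \rnorm$ and $\rhoCanon(\eta, \varepsilon) = \lnorm \eta \rnorm$; by the additivity of the entropy on the hemi-group (Definition \ref{def:entropy.hemi-group}) one has $\lnorm \xi \rnorm + \lnorm \eta \rnorm = \lnorm \xi \circ \eta \rnorm$; and by \eqref{eq:rhoopt} one has $\lnorm \xi \circ \eta \rnorm - \rhoCanon(\xi, \eta) = \langle \xi, \eta \rangle$. Chaining these three identities yields the recovery formula immediately, no further input required.

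The only genuine difficulty I anticipate is the two-case bookkeeping forced by $\signXi$; once one notes that $\aopt$ is always an endpoint of $\Xi$ and that multiplication by the positive quantity $|\aopt|$ preserves the direction of each chosen one-sided bound from Proposition \ref{lemma:scalar}, every claim reduces to a single algebraic manipulation. No new structural input is needed, which also explains the statement's proof-free appearance in the author's mind: it is essentially a restatement of Proposition \ref{lemma:scalar} under the canonical choice $a = \aopt$.
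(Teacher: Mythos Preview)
Your proof is correct and follows exactly the first route the paper mentions: deducing the inequalities from \eqref{eq:sp.CS0a}--\eqref{eq:sp.CS0c} in Proposition~\ref{lemma:scalar} after identifying $\aopt$ as the appropriate endpoint of $\Xi$ and using $|\aopt| = -\signXi\,\aopt$. Your case split on $\signXi$ is carried out cleanly, and the recovery formula \eqref{eq:sp.recover} is handled the same way the paper does, by combining \eqref{eq:scalar.rho} with \eqref{eq:rhoopt}.

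The paper, however, spells out a second and more direct argument for the three inequalities, which you may find illuminating: \eqref{eq:sp.CSa} is read off immediately from \eqref{eq:rhoopt} together with $\rhoCanon\ge 0$; \eqref{eq:sp.CSb} is obtained by combining \eqref{eq:rhoopt} and \eqref{eq:rhoopt2} and noting that $(\aopt-1)/\aopt>0$ for $\aopt\notin[0,1]$; and \eqref{eq:sp.CS2} is rewritten as $0\le \lnorm \xi\plus\eta\rnorm + ((\sup\Xi)^{-1}-1)\lnorm \xi\circ\eta\rnorm$, which is just the statement $\rho_{\sup\Xi}\ge 0$ (with a separate one-line check when $\sup\Xi=\infty$). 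This alternative avoids the $\signXi$ case split entirely by working directly with the canonical quantities $\rhoCanon$ and $\langle\cdot,\cdot\rangle$; your approach, by contrast, makes the dependence on Proposition~\ref{lemma:scalar} explicit and thereby clarifies why the proposition really is a corollary of it.
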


\begin{proof}
   Inequalities  \eqref{eq:sp.CSa}--\eqref{eq:sp.CS2} can be derived
   from Inequalities \eqref{eq:sp.CS0a}--\eqref{eq:sp.CS0c} using the
   fact that  $\signXi\aopt  \le0$.
   They can be derived also in an elementary way as follows.  Inequality \eqref{eq:sp.CSa}
  is immediate from the definition.
  Since $(a-1)/a\in(0,\infty)$ for $a \not \in [0,1]$,
  Inequality \eqref{eq:sp.CSb} follows from \eqref{eq:rhoopt} and
  \eqref{eq:rhoopt2}.   
   Inequality \eqref{eq:sp.CS2} is equivalent to
   \begin{eqnarray}
     \label{eq:sp.CS2a}
     0\le %-\aopt
     \lnorm \xi \plus \eta \rnorm  + \left(
     \frac{1}{\sup \Xi} -1 \right)  \lnorm \xi \circ \eta \rnorm
     .
   \end{eqnarray}
  If $\sup \Xi < \infty$, then \eqref{eq:sp.CS2a} is equivalent to
  $
   0\le \sup \Xi \lnorm \xi \plus \eta \rnorm  + \left(1 - \sup
     \Xi\right)  \lnorm \xi \circ \eta \rnorm 
   .$
   If  $\sup \Xi = \infty$, then \eqref{eq:sp.CS2a} is equivalent to
   \begin{eqnarray}
     \label{eq:sp.CS2b}
     0 \le  \lnorm \xi \plus \eta \rnorm  - \lnorm \xi \circ \eta \rnorm
     . 
   \end{eqnarray}
   If $\lnorm \xi \circ \eta \rnorm=0$, then  $\lnorm \xi \plus \eta
   \rnorm=0$.
   If $\lnorm \xi \circ \eta \rnorm\not= 0$, then \eqref{eq:sp.CS2b}
   is implied by $\sup \Xi = \infty$.
   Equality \eqref{eq:sp.recover} follows immediately from the definitions.
\endproof\end{proof}

\begin{remark}\label{rem:r}
   Inequality \eqref{eq:sp.CSa} implies that
  $$
  r_{\xi,\eta} :=
  \frac{\langle \xi, \eta \rangle}{\lnorm \xi \circ \eta \rnorm} \le 1
  .$$
  If $ -\sup \Xi \le \aopt < 0$, i.e., $2 \le m_\GF + M_\GF$,
  then  Inequality \eqref{eq:sp.CS2} implies
  $$
  -1 \le  \frac{\aopt }{\sup \Xi}\le r_{\xi,\eta} ,
  $$
  so that $r_{\xi,\eta}$ can be interpreted as a
  correlation coefficient.
  Clearly, if
  $\lnorm \xi \rnorm = \lnorm \eta
  \rnorm$, then
  $r_{\xi,\eta}
  $
  coincides with the Pearson correlation
  coefficient,
  \begin{eqnarray}
    \label{eq:pearson}
    \rho_{\xi, \eta} :=  \frac{\langle \xi, \eta \rangle_2}{ \sqrt{\lnorm \xi \rnorm  \lnorm
      \eta \rnorm}}
  .
  \end{eqnarray}
  The correlation coefficient $r_{\xi,\eta}$ is also closely related to 
  the coefficient of
  determination $R ^2$ of a linear model
  $Y = X\beta + \varepsilon$.
  Here, $X \in \RR^{n\times p}$, $\rank X = p$, the first column of $X$
  is identically~$1$, $\beta \in\RR^p$, and $\varepsilon \in
  \RR^n$ is a vector of errors.
  Let $H_\emptyset=0\in\RR^{n\times n}$ and for
  $A\subset\{1,\ldots, p\}$, $A\not = \emptyset$, let
  $H_A = X_A(X_A  ^\top X_A)^{-1} X_A^\top$, where $X_A$ consists of the
  columns of $X$
  given by $A$. Define $\lnorm A \rnorm = \| y - H_Ay\|^2_2$
  for a realization $y$ of $Y$ and let $A\plus B =
  A \cap B $.
  Then, $\sign \Xi = -1$, $m_\GF = 1/2$  
  and $a = 2$,
  so that $\langle A, B \rangle = 2 \lnorm A\rnorm$ for $B \subset A$.
  We let
  $$
  R^2_{A,B} := 1-\frac{\lnorm A\rnorm}{\lnorm B\rnorm}
 %  = \frac{2\langle A, B \rangle - \lnorm A\circ B \rnorm}{\langle A,     B \rangle}
   =  1 -\frac{r_{A,B} }{2 - r_{A,B}}
   \qquad \mbox{ for } B\subset A
   \mbox{ and} \lnorm B \rnorm \not = 0,
   $$
   hence
   $$
   r_{A,B} = 1 - \frac{R^2_{A,B}}{2- R^2_{A,B}}\in[0, 1]
   .
   $$
   The coefficient of
   determination $R ^2$
   equals $R^2_{A,\{1\}}$
   for a submodel  $Y = X_A\beta_A + \varepsilon$
   with $A \supset \{ 1 \}$.
\end{remark}
 \note{{
  \color{red}
  Note $\lnorm A \cap B \rnorm \ge
  \max\{ \lnorm A \rnorm, \lnorm B \rnorm\}$.
  Hence,
  $$\lnorm A \cap B \rnorm/ \lnorm A \circ B \rnorm \ge \frac1{1 +
    \frac {\min\{ \lnorm A \rnorm, \lnorm B \rnorm\}}{ \max\{ \lnorm A
      \rnorm, \lnorm B \rnorm\}}}
  \ge 1/2
  $$
  A value for $M_\GF=M_\GF(y)$ is difficult to determine?!

  Alternatively,
  $A\plus B =
  A \cup B $. Then $m_\GF = 0$, $a=1$, $ \langle A, B \rangle = \lnorm
  B\rnorm$
  and
  $$R^2 = 2 - \frac1{r_{A,B}}
  $$
  or
  $$
  r_{A,B} = (2- R^2)^{-1}
  $$
  mit $ r_{AB} \in [1/2, 1]$.
  Schoenere Formel mit Vorteil, dass $R^2$ und $r$ gleichlaufend sind,
  aber mit dem Nachteil, dass $r$ nicht als Korrelation
  interpretierbar ist.

  The alternative definition $A\plus B =
   A \cup B $ would lead to $ r_{A,B} = (2- R^2_{A,B})^{-1}\in[1/2,1]$ with
   the advantage that $R^2_{A,B}=1$ if and only if $r_{A,B}=1$, but
   also with the loss of interpretability of $r_{A,B}$ as a
   correlation coefficient.

   While coefficient of
   determination determine
   the correlation coefficient
   $r_{A,B}$ can be seen as a generalization of $R^2$, which is able to
   compare any two submodels.
   }
  }

\begin{definition}
  An   entropy-driven  hemi-group with comparable elements
  is said to have hemi-associative elements, if 
  \begin{eqnarray}
\label{eq:AG}
  \lnorm (\xi \plus \eta) \plus \nu \rnorm
                               &= &\lnorm \xi \plus (\eta \plus \nu)
  \rnorm,\qquad \forall  \xi ,\eta,\nu \in \GF.
  \end{eqnarray}
  We write
  $n \lambda = \lambda \plus\ldots \plus \lambda$ ($n$ times) for
 $n\in\NN$ and $\lambda \in \GF$,  and let $0\lambda \in \detsys$.
\end{definition}

For an  entropy-driven  hemi-group with hemi-associative, comparable
elements
we have a partial symmetry of the hemi-scalar product, namely,
  \begin{eqnarray} \label{eq:sp.symm}
    \langle  i\xi, j \xi  \rangle_*
    &=&  \langle  j\xi, i \xi \rangle_*
        \qquad \forall \xi\in\GF, i,j\in\NN_0
        .
  \end{eqnarray}
  Here, $*$ signifies the associated hemi-scalar product or a
  canonical hemi-scalar product.
  
\begin{proposition}[Generalization of the Cauchy-Schwarz inequality]\label{prop:CS}
%  Assume that $(m,n) \mapsto \langle m \xi, n \eta\rangle$  does not
%  change sign on $\NN^2$  for any fixed $\xi,\eta\in\GF$ and
  Let $(\GF,\circ,  \plus, \lnorm .\rnorm)$ be a
  entropy-driven  hemi-group with hemi-associative, comparable
  elements, and
  \begin{eqnarray*}
    c_m  &:=& \sup_{\xi \in \GF,\setminus \GF_0}
              \frac{ \lnorm m \xi \rnorm}{ \lnorm \xi \rnorm} < \infty
              \quad \forall m \in \NN,
    \\
     S_+ & :=& \inf_{\footnotesize
               \begin{array}{c}
                 \xi, \eta \in \GF\\
                 \signXi \langle \xi,  \eta\rangle_\assoc < 0\\
                  % \lnorm  \xi  \plus \eta \rnorm <
               %  \lnorm \xi \circ\eta \rnorm
                  \end{array}
                 }
                 \liminf_{
                 m,n\rightarrow\infty}
                \frac{\langle m \xi, n \eta\rangle_\assoc}{\sqrt{c_m c_n}
    \langle \xi,  \eta\rangle_\assoc} >0
    .
   \end{eqnarray*}
 If, for any $c>0$, strictly increasing subsequences $(m_k)$
 and $(n_k)$ in $\NN$ 
 exist such that
\begin{eqnarray}
     \label{eq:c+}
  c_{m_k} / c_{n_k} \rightarrow c,
\end{eqnarray}
then
   \begin{eqnarray}\label{eq:CS+}
 - \signXi  \langle \xi, \eta \rangle_\assoc 
    \le 2 \sqrt{\lnorm \xi \rnorm  \lnorm \eta \rnorm} / S_+
   \end{eqnarray}
If, additionally, $(\GF,\plus)$ is a group, let for $m\in\NN$
 \begin{eqnarray*}
    c_{-m}  &:=& \sup_{\xi \in \GF, \lnorm \xi \rnorm > 0}
             \frac{ \lnorm m (- \xi) \rnorm}{ \lnorm \xi \rnorm} < \infty,
    \\
   S_- & :=& \inf_{\footnotesize
               \begin{array}{c}
                 \xi, \eta \in \GF\\
                \signXi \langle \xi,  \eta\rangle_\assoc > 0
                  \end{array}
                 }
                 \liminf_{
                 m,n\rightarrow\infty}
                \frac{-\langle m \xi, n (- \eta)\rangle_\assoc}{\sqrt{c_m c_{-n}}
    \langle \xi,  \eta\rangle_\assoc} >0
    .
   \end{eqnarray*}
 If, for any $c>0$, strictly increasing subsequences $(m_k)$
 and $(n_k)$ in $\NN$ 
 exist such that
  \begin{eqnarray}
     \label{eq:c-}
  c_{m_k} / c_{-n_k} \rightarrow c
    ,
   \end{eqnarray}
then
\begin{eqnarray}\label{eq:CS-}
    \signXi \langle \xi, \eta \rangle_\assoc
  \le 2\sqrt{\lnorm \xi \rnorm  \lnorm \eta \rnorm} / S_-
  .
   \end{eqnarray}
   
\end{proposition}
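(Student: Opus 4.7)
The strategy mirrors the classical proof of the Cauchy--Schwarz inequality in a pre-Hilbert space, where non-negativity of $\|a\xi+b\eta\|^2$ is optimized over real scalars $a,b$. Here only integer multiples $m\xi$ and $n\eta$ are available, so the role of the continuous scalars $a,b$ is played by the pair $(c_m,c_n)$. The hypothesis \eqref{eq:c+}, which says that the ratio $c_{m_k}/c_{n_k}$ approximates any prescribed positive real along some subsequence, is precisely what will let us match $\lnorm\xi\rnorm$ and $\lnorm\eta\rnorm$ in the limit. The $S_+$ bound then substitutes for the scalar identity $\langle a\xi,b\eta\rangle = ab\langle\xi,\eta\rangle$.

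\textbf{Proof of \eqref{eq:CS+}.} One may assume $\signXi\langle\xi,\eta\rangle_\assoc<0$ (otherwise the left-hand side is non-positive and the inequality is trivial) and also $\lnorm\xi\rnorm,\lnorm\eta\rnorm>0$ (the degenerate case is handled separately using $\xi,\eta\in\detsys$ and \eqref{eq:R0=0}). The argument then has two ingredients. First, the elementary upper bound
$$
-\signXi\langle m\xi,n\eta\rangle_\assoc \;=\; \lnorm m\xi\circ n\eta\rnorm - \lnorm m\xi\plus n\eta\rnorm \;\le\; \lnorm m\xi\rnorm+\lnorm n\eta\rnorm \;\le\; c_m\lnorm\xi\rnorm+c_n\lnorm\eta\rnorm,
$$
which uses only non-negativity of $\lnorm\cdot\rnorm$, additivity on the hemi-group, and the definition of $c_m$. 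Second, the definition of $S_+$ yields, for each $\epsilon>0$ and all sufficiently large $m,n$ along an appropriate subsequence (once one checks that $\langle m\xi,n\eta\rangle_\assoc$ inherits the sign of $\langle\xi,\eta\rangle_\assoc$ as soon as the quotient in the $S_+$ definition is bounded below by $S_+-\epsilon$),
$$
-\signXi\langle m\xi,n\eta\rangle_\assoc \;\ge\; (S_+-\epsilon)\,\sqrt{c_m c_n}\;(-\signXi\langle\xi,\eta\rangle_\assoc).
$$
Dividing the chain of inequalities by $\sqrt{c_m c_n}$ gives
$$
-\signXi\langle\xi,\eta\rangle_\assoc \;\le\; \frac{1}{S_+-\epsilon}\!\left(\sqrt{c_m/c_n}\,\lnorm\xi\rnorm+\sqrt{c_n/c_m}\,\lnorm\eta\rnorm\right).
$$
Now apply \eqref{eq:c+} with $c=\lnorm\eta\rnorm/\lnorm\xi\rnorm$ to push both summands to $\sqrt{\lnorm\xi\rnorm\lnorm\eta\rnorm}$ along $(m_k,n_k)$, and let $\epsilon\downarrow0$ to obtain \eqref{eq:CS+}.

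\textbf{Proof of \eqref{eq:CS-}; main obstacle.} For \eqref{eq:CS-} one repeats the same two-step argument with $\eta$ replaced by $-\eta$ (which requires the group hypothesis), with $c_n$ replaced by $c_{-n}$, $S_+$ by $S_-$, and \eqref{eq:c+} by \eqref{eq:c-}; the nontrivial case is now $\signXi\langle\xi,\eta\rangle_\assoc>0$, while otherwise the statement is automatic. The main obstacle throughout is sign bookkeeping: the quantities appearing in the definitions of $S_\pm$ are ratios whose sign is controlled only under the restriction $\signXi\langle\xi,\eta\rangle_\assoc\lessgtr 0$, and one must verify that the same sign is preserved after substituting $m\xi,n\eta$ (or $m\xi,n(-\eta)$). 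Once this is checked, all remaining steps reduce to the hemi-group additivity, the sup-definition of $c_m$ and $c_{-m}$, and the subsequence provided by \eqref{eq:c+}/\eqref{eq:c-}.
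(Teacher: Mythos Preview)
Your proposal is correct and follows essentially the same route as the paper: start from $0\le\lnorm m\xi\plus n\eta\rnorm$, bound $\lnorm m\xi\rnorm$ and $\lnorm n\eta\rnorm$ via $c_m,c_n$, use the definition of $S_\pm$ to control the cross term, and then optimize over the ratio $c_m/c_n$ using \eqref{eq:c+} (resp.\ \eqref{eq:c-}). The only cosmetic difference is in the final optimization: you pick the single optimal ratio $c=\lnorm\eta\rnorm/\lnorm\xi\rnorm$ directly (the AM--GM minimum of $t\mapsto t\lnorm\xi\rnorm+t^{-1}\lnorm\eta\rnorm$), whereas the paper substitutes $c=b^2\langle\xi,\eta\rangle_\assoc^2/\lnorm\eta\rnorm^2$ and then maximizes $b(S_\pm-b)$ over $b\in(0,S_\pm)$; both give the same bound, and your handling of the $\epsilon$ in $S_+-\epsilon$ is in fact slightly more explicit than the paper's.
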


\begin{proof}
  Note that the cases $-\signXi \langle \xi, \eta \rangle_\assoc \le 0$
  and $\signXi \langle \xi, \eta \rangle_\assoc \le 0$ in \eqref{eq:CS+} and \eqref{eq:CS-},
  respectively, are trivial and can be excluded henceforth.
  This is also true for $\lnorm \eta \rnorm =0$ by \eqref{eq:R0=0}.
  Otherwise, let $n \eta := |n| (-\eta)$ for $n < 0$.
  For $\varepsilon >0$ and $m,n \in \NN$ large enough, we have  
  \begin{eqnarray*}
    0 \le c_m^{-1} \lnorm m \xi \plus (\pm n) \eta \rnorm
    &\le & \lnorm \xi \rnorm +
      {\displaystyle\frac{c_{\pm n}}{c_m}}  \lnorm \eta \rnorm
      \mp S_\pm \sqrt{\displaystyle\frac{c_{\pm n}}{c_m}} (-\signXi \langle \xi , \eta
           \rangle_\assoc)
           ,
  \end{eqnarray*}
  where the top sign of `$\pm$'
  is taken if $\signXi \langle \xi , \eta
  \rangle_\assoc < 0$, and
  the bottom sign if $\signXi\langle \xi , \eta
  \rangle_\assoc > 0$ and $(\GF, \plus)$ is a group.
  Consider sequences of $n$ and $m$ such that
  $c_{\pm n} / c_m \rightarrow c := b^2\langle \xi, \eta \rangle_\assoc^2 / \lnorm
  \eta \rnorm^2$ for some $b > 0$. Then,
 \begin{eqnarray*}
    0 
    &\le & \lnorm \xi \rnorm  \lnorm \eta \rnorm -
           b(S_\pm - b) \langle \xi, \eta \rangle_\assoc^2
           ,
  \end{eqnarray*}
  so that for  any $b \in (0, S_\pm)$
   \begin{eqnarray*}
     \langle \xi, \eta \rangle_\assoc^2 
    &\le&  \frac{\lnorm \xi \rnorm  \lnorm \eta \rnorm}
          {b(S_\pm-b)}
          .
   \end{eqnarray*}
   Hence,
   \begin{eqnarray*}
    \mp \signXi \langle \xi, \eta \rangle_\assoc
    &\le&  2 \sqrt{\lnorm \xi \rnorm  \lnorm \eta \rnorm} / S_\pm
          .
   \end{eqnarray*}
\endproof\end{proof}

\begin{remark}
  \begin{enumerate}
  \item 
  % close to iff
   If constants $c_+> 0$ and $a\in\RR\setminus \{0\}$ exist such that
   $$
   c_{ m} / m^{a} \rightarrow c_+
  \quad (m \rightarrow \infty)
  ,
  $$
  then Condition \eqref{eq:c+} is satisfied.
  If, furthermore $(\GF, \plus)$ is a group and $c_->0$ exists such
  that 
  $$
  c_{- m} / m^{a} \rightarrow c_-
  \quad (m \rightarrow \infty)
  ,
  $$
  with the same $a$ as above, then
  Condition \eqref{eq:c-} is satisfied.
  
\item
  If all conditions of Proposition \ref{prop:CS} are satisfied, then
  the Pearson correlation coefficient $\rho_{\xi, \eta}$, see
  Equation \eqref{eq:pearson},
  satisfies
  $$
  -\frac{|\aopt|}{S_+}\le
  \rho_{\xi, \eta}
   \le 
   \frac{|\aopt|}{S_-}
   .
   $$
   From the definition of $S_\pm$, it follows immediately that $|\aopt|/S_\pm
   =1$ in the case of a pre-Hilbert space.
   In general, however, $S_-$ and $S_+$ do not coincide and $|\aopt|/S_\pm$
   need not  be $1$. Hence, the simpler correlation coefficient
   $r_{\xi,\eta}$ in Remark \ref{rem:r}
   seems to fit a general framework better.
  \end{enumerate}
\end{remark}

\subsection{Constructions}
\label{sec:constructions}

Proposition \ref{prop:construction1} below shows that
if Equations \eqref{eq:sp.linear} and \eqref{eq:sp.symm} and % \eqref{eq:Mxi} 
and some
further assumptions hold, then an entropy exists with the
given kernel $\langle \cdot, \cdot \rangle$ as hemi-scalar product.

Proposition \ref{prop:scoring.rule} shows roughly that
for a given scoring rule  an entropy exists with the scoring rule
as associated hemi-metric. Reversely, under more restrictive assumptions
on the entropy, some of the associated hemi-metrics are scoring rules.

\begin{lemma}\label{lemma:sums.scalar.products}
  Let $(\GF , \plus)$ be a semigroup. Assume that $\langle \cdot , \cdot
  \rangle : \GF 
  \times \GF  \rightarrow \RR$ is a kernel with properties 
  \eqref{eq:sp.linear} and \eqref{eq:sp.symm}. Then, for all $\xi\in \GF $ and
  $i,j,m \in \NN$ we have
  \begin{eqnarray}\label{eq:scalarsum.1}
\langle i \xi, j \xi \rangle 
 &=&
 \sum_{k=1}^{i+j-1}  \langle  k \xi , \xi\rangle
 -     \sum_{k=1}^{j-1}  \langle k  \xi ,   \xi\rangle
-
     \sum_{k=1}^{i-1}  \langle k \xi,  \xi \rangle
     ,
     % \quad \forall \xi \in \GF ; i,j\in\NN
 \\\label{eq:scalarsum.2}
    \sum_{n=1}^{m-1}  \langle n i \xi,  i \xi \rangle
    &= &
         \sum_{k= 1 }^{mi-1}  \langle k  \xi,  \xi \rangle
         -
      m \sum_{k=1}^{i-1}  \langle k \xi,  \xi \rangle.
  \end{eqnarray}

\end{lemma}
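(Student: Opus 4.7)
The plan is to abbreviate $h(i,j):=\langle i\xi, j\xi\rangle$ and $g(k):=\langle k\xi,\xi\rangle$, derive a one-step recursion in the first argument of $h$ from the weak linearity \eqref{eq:sp.linear}, iterate it to collapse the first argument to $1$, and finally obtain \eqref{eq:scalarsum.2} from \eqref{eq:scalarsum.1} by a short telescoping calculation.

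First I would substitute $(k-1)\xi,\ \xi,\ j\xi$ into the three slots of \eqref{eq:sp.linear} and use the semigroup law to rewrite $(k-1)\xi\plus\xi=k\xi$ and $\xi\plus j\xi=(j+1)\xi$; the symmetry \eqref{eq:sp.symm} then identifies $\langle\xi,j\xi\rangle=g(j)$. This produces the recursion
\[
h(k,j) \;=\; h(k-1,j+1) - g(k-1) + g(j), \qquad k\ge 2.
\]
A straightforward induction on $i$, with base case $h(1,j)=g(j)$ from \eqref{eq:sp.symm}, then yields
\[
h(i,j) \;=\; g(i+j-1) \;-\; \sum_{k=1}^{i-1} g(k) \;+\; \sum_{k=j}^{j+i-2} g(k) \;=\; \sum_{k=j}^{i+j-1} g(k) \;-\; \sum_{k=1}^{i-1} g(k);
\]
the induction step is nothing more than applying the recursion once and absorbing the stray $g(j)$ and $g(i)$ into the two partial sums. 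Splitting $\sum_{k=j}^{i+j-1}g(k)=\sum_{k=1}^{i+j-1}g(k)-\sum_{k=1}^{j-1}g(k)$ delivers \eqref{eq:scalarsum.1}.

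For \eqref{eq:scalarsum.2} I would specialise \eqref{eq:scalarsum.1} at $(ni,i)$, collapsing two nested partial $g$-sums into a single block to get
\[
\langle ni\xi, i\xi\rangle \;=\; \sum_{k=ni}^{(n+1)i-1} g(k) \;-\; \sum_{k=1}^{i-1} g(k),
\]
and then sum over $n\in\{1,\dots,m-1\}$: the first piece telescopes to $\sum_{k=i}^{mi-1}g(k)$, and the second accumulates $(m-1)\sum_{k=1}^{i-1}g(k)$. Rewriting $\sum_{k=i}^{mi-1}g(k)$ as $\sum_{k=1}^{mi-1}g(k)-\sum_{k=1}^{i-1}g(k)$ yields exactly the right-hand side of \eqref{eq:scalarsum.2}. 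The only real obstacle is index bookkeeping through the iteration; beyond the hypotheses \eqref{eq:sp.linear}, \eqref{eq:sp.symm} and associativity of $\plus$, no further structural input from the entropy-driven framework is needed.
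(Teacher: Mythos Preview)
Your argument is correct and essentially the same as the paper's: both derive the one-step recursion $\langle k\xi,j\xi\rangle=\langle (k-1)\xi,(j+1)\xi\rangle-\langle (k-1)\xi,\xi\rangle+\langle \xi,j\xi\rangle$ from \eqref{eq:sp.linear}, collapse the first argument down to $1$ (the paper telescopes, you induct), and then specialise \eqref{eq:scalarsum.1} to the pair $(ni,i)$ and sum over $n$ to obtain \eqref{eq:scalarsum.2}. If anything, your induction with explicit base case $h(1,j)=g(j)$ is marginally cleaner, since it never touches the formally undefined symbol $0\xi$ in a bare semigroup.
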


\begin{proof}
  Equation \eqref{eq:sp.linear} implies
  that 
  \begin{eqnarray*}
     \langle  (i-k) \xi , (j+k) \xi \rangle  =
    \langle (i-k-1) \xi , (j+k+1)\xi \rangle - \langle (i-k-1) \xi, \xi
    \rangle+
      \langle  \xi, (j+k)\xi\rangle
  \end{eqnarray*}
   for $0\le k  <i$. Summing up both sides from
   $k=0$ to $k=i-1$ yields \eqref{eq:scalarsum.1}.
   It follows immediately from \eqref{eq:scalarsum.1} that
    \begin{eqnarray*}\label{eq:i.ni}
      \langle  i \xi, ni \xi \rangle =
      \sum_{k= i  n}^{(n+1)i-1}  \langle k \xi,  \xi \rangle
      -
      \sum_{k=1}^{i-1}  \langle k \xi,  \xi \rangle, \qquad \forall \xi \in
      \GF ; i,n\in\NN
      .  \end{eqnarray*}
    Summing up yields \eqref{eq:scalarsum.2}.
\endproof\end{proof}

\begin{lemma}\label{lemma:sums.scalar.products.comm}
  Let the conditions of Lemma \ref{lemma:sums.scalar.products}
  hold 
  and assume that
 \begin{eqnarray}\label{eq:sp.commut1}
  \langle  \eta \plus \lambda , \nu \rangle & =&
  \langle  \lambda \plus \eta, \nu  \rangle
  \qquad \forall \lambda,\nu,\eta\in \GF 
   \\\label{eq:sp.commut2}
 \langle\xi,  \eta \plus \lambda \plus \nu \rangle & =&
  \langle \xi,  \lambda \plus \eta \plus\nu  \rangle
  \qquad \forall\xi, \lambda,\nu,\eta\in \GF 
  .
 \end{eqnarray}
 
  Then, for all $\xi, \lambda,\nu,\eta\in \GF$, we have
 \begin{eqnarray}
    \nonumber
   \langle \xi \plus \eta , \lambda\plus \nu \rangle
 &    = &
    \langle \xi \plus \lambda , \eta \plus \nu \rangle
 -   \langle   \xi , \eta   \rangle
         +   \langle \xi , \lambda \rangle
 +    \langle \eta , \nu \rangle
    -  \langle \lambda, \nu \rangle
          +  {}
    \\&&\label{eq:scalarsum.3}
     {} +  \langle \eta, \lambda \rangle - \langle  \lambda, \eta
       \rangle.
  \end{eqnarray}

  If, furthermore, $\langle \cdot, \cdot \rangle$ is symmetric, then
  for all $\xi, \nu\in \GF$ and $m\in\NN$,
  \begin{eqnarray}\label{eq:sumxieta}
   \sum_{k=1}^{m-1}\langle k (\xi \plus \eta), \xi \plus\eta \rangle
  =
  \langle  m \xi ,m \eta \rangle
  -m\langle   \xi ,\eta \rangle
 +\sum_{k=1}^{m-1}\langle  k \eta ,\eta \rangle
    +\sum_{k=1}^{m-1}\langle  k \xi ,\xi \rangle
    .
  \end{eqnarray}  
\end{lemma}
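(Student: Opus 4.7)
The plan is to derive \eqref{eq:scalarsum.3} by a direct calculation that ``moves'' $\lambda$ from the second slot to the first, applying \eqref{eq:sp.commut2} (and later \eqref{eq:sp.commut1}) to swap $\eta$ and $\lambda$ at the appropriate moment, and then to derive \eqref{eq:sumxieta} by a telescoping sum after establishing a permutation invariance inside $\langle\cdot,\cdot\rangle$.

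To prove \eqref{eq:scalarsum.3}, first expand by \eqref{eq:sp.linear}:
\begin{eqnarray*}
\langle \xi \plus \eta, \lambda \plus \nu \rangle = \langle \xi, \eta \plus \lambda \plus \nu \rangle - \langle \xi,\eta\rangle + \langle \eta, \lambda \plus \nu \rangle.
\end{eqnarray*}
In the first summand swap $\eta$ and $\lambda$ by \eqref{eq:sp.commut2}, then reverse \eqref{eq:sp.linear} to rewrite $\langle \xi, \lambda \plus \eta \plus \nu\rangle = \langle \xi \plus \lambda, \eta \plus \nu\rangle + \langle \xi,\lambda\rangle - \langle \lambda, \eta \plus \nu\rangle$; this produces the target term $\langle \xi\plus\lambda,\eta\plus\nu\rangle$ together with a stray $-\langle \lambda,\eta\plus\nu\rangle$. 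In the remaining summand, write $\langle \eta,\lambda\plus\nu\rangle = \langle\eta\plus\lambda,\nu\rangle + \langle\eta,\lambda\rangle - \langle\lambda,\nu\rangle$ by reversing \eqref{eq:sp.linear}, swap the inner $\eta$ and $\lambda$ via \eqref{eq:sp.commut1}, and expand once more to obtain the correction $+\langle \lambda, \eta\plus\nu\rangle - \langle\lambda,\eta\rangle + \langle\eta,\nu\rangle$. The $\langle\lambda,\eta\plus\nu\rangle$ terms cancel, and collecting the remaining constants yields exactly the six scalar residuals appearing in \eqref{eq:scalarsum.3}.

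For \eqref{eq:sumxieta}, the preparatory observation is that, under the additional symmetry of $\langle\cdot,\cdot\rangle$, conditions \eqref{eq:sp.commut1} and \eqref{eq:sp.commut2} let one freely reorder the summands in either slot. Indeed, \eqref{eq:sp.commut1} together with associativity of $(\GF ,\plus)$ yields cyclic rotation in the first slot (hence, through symmetry, in the second), while \eqref{eq:sp.commut2} transported through symmetry supplies a transposition of the first two summands of any sum with at least three terms; cyclic rotations plus a single adjacent transposition generate the full symmetric group $S_n$. In particular $\langle k(\xi\plus\eta),\xi\plus\eta\rangle = \langle k\xi\plus k\eta,\xi\plus\eta\rangle$ for every $k$. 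Applying the now-symmetric form of \eqref{eq:scalarsum.3} (the last two correction terms vanish) with $\xi\mapsto k\xi$, $\eta\mapsto k\eta$, $\lambda\mapsto\xi$, $\nu\mapsto\eta$ gives
\begin{eqnarray*}
\langle k(\xi\plus\eta),\xi\plus\eta\rangle = \langle (k{+}1)\xi,(k{+}1)\eta\rangle - \langle k\xi,k\eta\rangle + \langle k\xi,\xi\rangle + \langle k\eta,\eta\rangle - \langle\xi,\eta\rangle.
\end{eqnarray*}
Summing over $k=1,\dots,m-1$, the difference $\langle (k{+}1)\xi,(k{+}1)\eta\rangle - \langle k\xi,k\eta\rangle$ telescopes to $\langle m\xi,m\eta\rangle - \langle\xi,\eta\rangle$, which delivers \eqref{eq:sumxieta}.

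The main obstacle is the permutation-invariance step: \eqref{eq:sp.commut1} is formulated only for two-element sums, so promoting it to arbitrary rearrangements of the $2k$ alternating copies of $\xi$ and $\eta$ inside $\langle k(\xi\plus\eta),\xi\plus\eta\rangle$ requires the bootstrapping by associativity, symmetry, and \eqref{eq:sp.commut2} described above. Once this rearrangement is justified, both identities follow by routine algebra.
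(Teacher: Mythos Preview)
Your argument is correct and follows essentially the same route as the paper. For \eqref{eq:scalarsum.3} the paper records four instances of \eqref{eq:sp.linear} and sums them so that the mixed terms cancel via \eqref{eq:sp.commut1} and \eqref{eq:sp.commut2}; your sequential ``move $\lambda$ across'' computation is the same algebra unrolled. For \eqref{eq:sumxieta} the paper simply states that \eqref{eq:scalarsum.3} yields the one-step identity
\[
\langle k(\xi\plus\eta),\xi\plus\eta\rangle=\langle (k{+}1)\xi,(k{+}1)\eta\rangle-\langle k\xi,k\eta\rangle-\langle\xi,\eta\rangle+\langle k\eta,\eta\rangle+\langle k\xi,\xi\rangle
\]
and then telescopes, silently using $\langle k(\xi\plus\eta),\cdot\rangle=\langle k\xi\plus k\eta,\cdot\rangle$; you make this rearrangement explicit by showing that \eqref{eq:sp.commut1} (cyclic rotation in the first slot) together with \eqref{eq:sp.commut2} transported by symmetry (a transposition) generate full permutation invariance. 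That extra paragraph is a genuine clarification rather than a different method.
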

\begin{proof}
  To see \eqref{eq:scalarsum.3}, check the equalities
 \begin{eqnarray*}
   \langle \xi \plus \eta , \lambda\plus \nu \rangle
   -  \langle  \eta , \lambda\plus \nu \rangle
  & =&
   \langle \xi  , \eta \plus\lambda\plus \nu \rangle
   -
   \langle \xi  , \eta \rangle
   ,\\
 - \langle \xi \plus \lambda , \eta \plus \nu \rangle
   +  \langle  \lambda , \eta \plus \nu \rangle
  & =&
  - \langle \xi  ,\lambda \plus \eta \plus \nu \rangle
   +
   \langle \xi  , \lambda \rangle
   ,\\
   \langle  \lambda \plus \eta , \nu  \rangle -  \langle  \lambda ,
   \eta \plus \nu  \rangle
   &=&
 \langle  \eta, \nu \rangle   - \langle  \lambda, \eta
       \rangle
   ,\\
  - \langle \eta   \plus\lambda , \nu  \rangle +  \langle  \eta ,
   \lambda \plus \nu  \rangle
   &=&
 -\langle  \lambda, \nu \rangle   + \langle \eta, \lambda
       \rangle
       ,
  \end{eqnarray*}
 and sum them up.
 Equation \eqref{eq:scalarsum.3} implies
  $$
  \langle k (\xi \plus \eta), \xi + \eta \rangle
  =
  \langle ( k + 1)\xi ,(k+1) \eta \rangle
 -\langle  k \xi ,k\eta \rangle
 -\langle   \xi ,\eta \rangle
 +\langle  k \eta ,\eta \rangle
 +\langle  k \xi ,\xi \rangle
 .
$$
Summing up yields the second assertion of the lemma.
\endproof\end{proof}

Except that  the operator $\circ$ is not defined explicitly,
Proposition \ref{prop:construction1}
will give conditions, such that a kernel
$\langle\cdot,  \cdot \rangle$
can be interpreted as an associated hemi-scalar product on a structurally
simple space $\GF $. Proposition \ref{lemma:reverse.scalar} 
considers more general spaces $\GF $.
Crucial for the construction of $\lnorm \cdot \rnorm$
is the following consistency condition
\begin{eqnarray}
  \label{eq:Mxi}
   M_\xi :=
  \sup_{\eta\in \GF, m,n \in \NN, m \eta = n\xi}
\frac  e n \left(
    \sum_{k=1}^{m-1} \langle k \eta , \eta\rangle
    - \sum_{k=1}^{n-1} \langle k \xi , \xi\rangle
  \right) \in(0, \infty)
\end{eqnarray}
for some $e \in\{-1,+1\}$ and some $\xi \in \GF $.
Since $M_\xi$ is always non-negative, $M_\xi$ might be considered as
an entropy if $\eta \mapsto M_\eta$ is not identially $0$.
The consistency condition
states that $M_\xi$ may neither be $0$ nor
$\infty$.
Note that
Condition \eqref{eq:Mxi} does not
hold for $\xi \in \detsys$ and generally not  for both
signs $e$.

\begin{example}[Example \ref{bsp:hilbert} cont'd]\label{bsp:hilbert.2}
  For a pre-Hilbert space $H$ with
  scalar product $\langle \cdot , \cdot  \rangle_H$
  and $\eta = n\xi / m$,
  we have
   \begin{eqnarray*}
  \sum_{k=1}^{m-1} \langle k \eta , \eta\rangle_H
     - \sum_{k=1}^{n-1} \langle k \xi , \xi \rangle_H
     &=&
  \left(\frac{m (m-1)}2 \cdot \frac {n^2}{m^2} -
    \frac{n(n-1)}2
  \right) \langle  \xi , \xi \rangle_H
 \\& =& \frac{n}2\left(1-\frac nm   \right) \langle  \xi , \xi \rangle_H
  .   \end{eqnarray*}
  Hence, $e$ is necessarily $1$ and 
  we get $M_\xi = \langle \xi , \xi  \rangle_H /
  2$.
  That is, $\xi\mapsto M_\xi$ is indeed the standard entropy up to a
  factor $2$. Obviously, Equation \eqref{eq:Mxi} is far away from the standard
  interpretation of what a squared norm is.
\end{example}

\begin{example}[Example \ref{ex:realaxis}.\ref{ex:max.real} cont'd]
  For 
  $$
 \langle \xi,\eta \rangle =  ( \xi \wedge \eta)^\alpha
 ,\qquad \xi,\eta \in  [0,\infty)
 ,$$
 and $\eta = n\xi / m$, we have 
 $$
 \sum_{k=1}^{m-1} \langle k \eta , \eta\rangle
 - \sum_{k=1}^{n-1} \langle k \xi , \xi \rangle =
 \left((m-1) \frac {n^\alpha}{m^\alpha} - (n-1)
 \right) \xi^\alpha
% = \left(1 - \frac nm\right) | \xi|^\alpha
 .$$
 If $\alpha = 1$, then both signs are possible in \eqref{eq:Mxi}.
 Otherwise, the sign $e$ is necessarily $-1$.
 In all  cases, we have $M_\xi =  \xi^\alpha$.
\end{example}

\begin{proposition}\label{prop:construction1}
  Let all the conditions within Lemma
  \ref{lemma:sums.scalar.products.comm}
  hold, i.e.,
  $(\GF , \plus)$ is a semigroup and $\langle \cdot , \cdot
  \rangle : \GF 
  \times \GF  \rightarrow \RR$ is a symmetric kernel with properties 
  \eqref{eq:sp.linear}, \eqref{eq:sp.symm}, \eqref{eq:sp.commut1}
  and  \eqref{eq:sp.commut2}.
  Assume further, that, for each
  $\nu,\eta\in \GF $, natural numbers $m,n\in\NN$ exist, such
  that $n\nu = m \eta$.
  Assume that a sign $e\in\{-1, 1\}$ and an element
  $\xi\in \GF$ exist such 
   that \eqref{eq:Mxi} holds. 
  Then, a non-negative function $\lnorm \cdot \rnorm$ exists on $\GF $,
   such that
   $e\langle \nu, \eta\rangle = \lnorm \nu \plus \eta \rnorm -\lnorm
   \nu\rnorm -\lnorm \eta \rnorm  $
   for all $\nu,\eta\in \GF$.
\end{proposition}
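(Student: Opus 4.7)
The plan is to set $\lnorm \xi \rnorm := M_\xi$ and to extend this to all of $\GF$ via the commensurability hypothesis. For an arbitrary $\eta \in \GF$ I would fix any $(m,n)\in\NN^2$ with $m\eta = n\xi$ and declare
\begin{equation*}
\lnorm \eta \rnorm \,:=\, \frac{1}{m}\left(n M_\xi + e\sum_{k=1}^{n-1}\langle k\xi, \xi\rangle - e\sum_{k=1}^{m-1}\langle k\eta, \eta\rangle\right).
\end{equation*}
The formula is forced: the desired identity, iterated, yields $\lnorm k\lambda\rnorm = k\lnorm \lambda\rnorm + e\sum_{j=1}^{k-1}\langle j\lambda,\lambda\rangle$ for every $k\in\NN$ and $\lambda\in\GF$, and combining this with the required consistency $\lnorm m\eta\rnorm = \lnorm n\xi\rnorm$ solves uniquely for $\lnorm\eta\rnorm$ in terms of $M_\xi$.

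Non-negativity is immediate from the supremum defining $M_\xi$: applied to the triple $(m,n,\eta)$ it yields $\tfrac{e}{n}\bigl(\sum_{k=1}^{m-1}\langle k\eta,\eta\rangle - \sum_{k=1}^{n-1}\langle k\xi,\xi\rangle\bigr) \le M_\xi$, which rearranges to $\lnorm \eta\rnorm \ge 0$. For well-definedness I would first verify invariance under the rescaling $(m,n)\mapsto(m\ell,n\ell)$, using Equation~\eqref{eq:scalarsum.2} of Lemma~\ref{lemma:sums.scalar.products} applied to both bases $\xi$ and $\eta$, together with $\langle jm\eta,m\eta\rangle = \langle jn\xi,n\xi\rangle$, which reduces the scaled expression to the original. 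For two unrelated representations $(m,n)$ and $(m',n')$, I would pass to the common denominator $mm'$; absent the degeneracy $p\xi = q\xi$ with $p \neq q$, the relation $nm'\xi = n'm\xi$ in $\GF$ forces $nm'=n'm$ and equality follows, while in the general case the residual discrepancy $(nm'-n'm)M_\xi - e\bigl[\sum_{k=1}^{nm'-1}\langle k\xi,\xi\rangle - \sum_{k=1}^{n'm-1}\langle k\xi,\xi\rangle\bigr]$ has to be squeezed to zero using the supremum property of $M_\xi$ applied to the pair $(nm',n'm,\xi)$ together with its swap.

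With well-definedness in hand, the additivity identity $\lnorm\nu\plus\eta\rnorm - \lnorm\nu\rnorm - \lnorm\eta\rnorm = e\langle\nu,\eta\rangle$ is a short calculation. Given $\nu,\eta\in\GF$, I would choose a common first coordinate $m$ with $m\nu = n\xi$ and $m\eta = n'\xi$, so that $m(\nu\plus\eta) = (n+n')\xi$ by the semigroup structure. Substituting the three representations into the formula, the $M_\xi$-coefficients cancel, the $\xi$-sums combine via Equation~\eqref{eq:scalarsum.1} into $\langle n\xi, n'\xi\rangle = \langle m\nu, m\eta\rangle$, and the remaining $\eta$-sums are rewritten via Equation~\eqref{eq:sumxieta} of Lemma~\ref{lemma:sums.scalar.products.comm} as $-\langle m\nu, m\eta\rangle + m\langle\nu,\eta\rangle$. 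The two $\langle m\nu,m\eta\rangle$ contributions cancel and division by $m$ leaves exactly $e\langle\nu,\eta\rangle$. The main obstacle is the well-definedness step in its full generality, for which one must extract sharp consequences from $M_\xi$ being a finite supremum rather than relying on integer arithmetic alone; once this is settled, everything else is a mechanical manipulation of the consistency identities \eqref{eq:scalarsum.1}, \eqref{eq:scalarsum.2} and \eqref{eq:sumxieta}.
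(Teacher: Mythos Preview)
Your proposal is correct and follows essentially the same route as the paper: define $\lnorm\eta\rnorm$ from a representation $m\eta=n\xi$ via the forced recursion, obtain non-negativity from the supremum $M_\xi$, and verify the identity $e\langle\nu,\eta\rangle=\lnorm\nu\plus\eta\rnorm-\lnorm\nu\rnorm-\lnorm\eta\rnorm$ by combining \eqref{eq:scalarsum.1} and \eqref{eq:sumxieta}. Two minor differences: the paper allows any value $\lnorm\xi\rnorm\ge M_\xi$ rather than fixing $\lnorm\xi\rnorm=M_\xi$, and its soundness check is phrased as $\lnorm(mi)\eta\rnorm=\lnorm m(i\eta)\rnorm$ (via \eqref{eq:scalarsum.2}) rather than your more explicit analysis of independence from the chosen pair $(m,n)$; your treatment of the degenerate case $p\xi=q\xi$, $p\neq q$, is more candid than the paper's, which does not isolate it.
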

\begin{proof}
 The proof here and that of Proposition \ref{lemma:reverse.scalar}
  is structurally close to the proof of Lemma 27 in \cite{denker}.
  % denker., S 209?
    We show that the two defining conditions
    \begin{eqnarray}
      \label{eq:xiGmxi}
    \lnorm \xi \rnorm  &\ge &M_\xi,
    \\\nonumber
  \lnorm i \eta \rnorm & =&
             i \lnorm \eta \rnorm
             + e \sum_{k=1}^{i-1} \langle k \eta ,
                                \eta \rangle\quad \text{for }i\ge 1,
                                \eta \in \GF    
  \end{eqnarray}
  are sound and complete.
  For a fixed $\eta$ we have $n\xi = m \eta$ for some $m,n \in\NN$.
  Hence, necessarily,
  \begin{eqnarray}
    \label{eq:defeta}
     m \lnorm \eta \rnorm
   = n  \lnorm \xi \rnorm  + e \sum_{k=1}^{n-1} \langle k \xi,
   \xi \rangle - e \sum_{k=1}^{m-1} \langle k \eta,
   \eta \rangle,
  \end{eqnarray}
  and $\lnorm \eta \rnorm$ is non-negative for all $\eta \in \GF $.
  Assume that $\nu \in \GF$ and $n'\xi = m' \nu$. Then,
   \begin{eqnarray*}
      m m' \lnorm \nu \plus \eta \rnorm
   = (mn' + nm') \lnorm \xi \rnorm  + e \sum_{k=1}^{mn'+nm'-1} \langle k \xi,
   \xi \rangle - e \sum_{k=1}^{mm'-1} \langle k (\nu\plus \eta),
   \nu \plus \eta \rangle,
   \end{eqnarray*}
   so that, by \eqref{eq:defeta} and subsequently by
   \eqref{eq:scalarsum.1}  and \eqref{eq:sumxieta},
   we have
   \begin{eqnarray*}
  \lefteqn{  e  m m' ( \lnorm \nu \plus \eta \rnorm-   \lnorm \nu
     \rnorm-   \lnorm \eta \rnorm)
     }\\
      &=&   \sum_{k=1}^{mn'+nm'-1} \langle k \xi,
          \xi \rangle
          -\sum_{k=1}^{mn'-1} \langle k \xi,
        \xi \rangle
         -\sum_{k=1}^{nm'-1} \langle k \xi,
          \xi \rangle
          + {} \\  && {}
    + \sum_{k=1}^{mm'-1} \langle k \nu,
        \nu \rangle
        + \sum_{k=1}^{mm'-1} \langle k \eta,
        \eta \rangle
          -
          \sum_{k=1}^{mm'-1} \langle k (\nu\plus \eta),
          \nu \plus \eta \rangle
     ,
     \\
     &=&
         \langle m n'\xi, nm'  \xi \rangle
         -\langle  m m' \nu  ,m m'\eta \rangle
  + mm'\langle   \nu ,\eta \rangle
    =
  mm'\langle   \nu ,\eta \rangle.
   \end{eqnarray*}
   To see that the definition and the calculations are sound, we
   finally show that
   $$\lnorm ( mi ) \eta \rnorm = \lnorm m (i \eta) \rnorm
   .
   $$
   The left handside equals
   \begin{eqnarray*}
   \lnorm ( mi ) \eta \rnorm &=& (mi) \lnorm \eta \rnorm + e \sum_{k=1}^{mi-1} \langle k \eta ,
   \eta \rangle.
    \end{eqnarray*}
   The right handside equals
    \begin{eqnarray*}
   \lnorm  m (i \eta) \rnorm &=& m \lnorm i \eta \rnorm + e \sum_{k=1}^{m-1} \langle k i\eta ,
                                   i\eta \rangle
  \\&=&m \left(  i \lnorm \eta \rnorm
             + e \sum_{k=1}^{i-1} \langle k \eta ,
                                \eta \rangle \right)
           + e \sum_{k=1}^{m-1} \langle k i\eta ,
   i\eta \rangle=  \lnorm ( mi ) \eta \rnorm 
  %   \\    \lnorm m (i \eta) \rnorm &=&   \lnorm n(i \xi) \rnorm 
  %   \\      \lnorm ( mi  \eta )\rnorm &=&   \lnorm n i \xi\rnorm
    \end{eqnarray*}
    by Lemma \ref{lemma:sums.scalar.products}.     
  \endproof\end{proof}
  
%We conjecture that the entropy constructed in the proof of
% Proposition \ref{prop:construction1} for $\lnorm \xi \rnorm =M_\xi$ 
% is minimal in the sense of Definition \ref{def:rho}. 
From a practical point of view, Proposition \ref{prop:construction1}
 deals essentially only with one-dimensional spaces that
have countably many elements. While the extension to more general
spaces by means of a continuity argument looks somehow
trivial, the consistency conditions for an extension to
higher dimensional spaces are unclear und Proposition
\ref{lemma:reverse.scalar} gives only an elementary statement.

 \begin{definition}    \label{def:equiv}
     Let $(\GF,\circ,  \plus, \lnorm .\rnorm)$ be a
     entropy-driven hemi-group with hemi-associative, comparable
     elements.
  %   Let $\GF
   %    $\plus$ be a continuous operator outside
  %     and $(\GF , \plus)$ a hemi-commutative semigroup and closed.
   The equivalence relation $\nu {\cal R}  \eta$ shall hold
  if and only if $n,m\in\NN$ exist such that $n \nu  = m \eta$.
  A set $\basis$ of equivalence classes of $\GF$ is called generating,
  % if the  sub-semigroup $\GF_\basis$
  if $\GF$ is 
  generated by $\basis$. %$\plus$
  % is dense in $\GF$
 % and if % that
 % any intersection %of the closures
%  of two
  % different equivalence classes of $\basis$ is a subset of $Z$.
  
 %  and if $Z$ is closed.
  \end{definition}
  
  \begin{proposition}\label{lemma:reverse.scalar}
    Let the conditions of Definition \ref{def:equiv} hold.
      Assume that
      $$Z=\{ \varepsilon \in \GF: \langle\varepsilon , \varepsilon\rangle =0\}
     $$ 
     is a sub-semigroup of $(\GF , \plus)$
     and that
  \begin{eqnarray}
           \nonumber
    \langle \xi , \varepsilon \rangle
      &=&0\quad  \forall \xi, \in \GF, \varepsilon\in Z
          .            
      \end{eqnarray}
   For each $B \in \basis\setminus\{ Z\}$, let the conditions of Proposition
    \ref{prop:construction1} hold with some $\xi_B \in B$,
    for one and the same sign $e$. 
    For $\eta \in B$ let $m_B(\eta), n_B(\eta) \in \NN$ such that 
    $m_B(\eta) \eta = n_B(\eta) \xi_B$.
    %   for a dense subset of $A$
    Let $\lnorm \xi_B  \rnorm \ge M_{\xi_B}$, $B\in\basis$, and 
    $$
    \lnorm a_B \rnorm =   \frac{1}{m(a_B)} \left(n(a_B)  \lnorm \xi_B \rnorm  + e \sum_{k=1}^{n(a_B)-1} \langle k \xi_B,
     \xi_B \rangle - e \sum_{k=1}^{m(a_B)-1} \langle k a_B,
     a_B \rangle\right)
,\quad
$$
 for all $a_B \in  B$ and all $ B\in\basis$.
  Assume further,
  that for all  $a_B, b_B \in B $, $B\in \basis$, $\ell \in \NN$,
  the inequality
  \begin{eqnarray}\label{eq:a>0}
  \sum_{i=1}^\ell  \lnorm a_{B_i} \rnorm +
    e\sum_{i=1}^{\ell-1} \langle a_{B_i},  \sum_{j=i+1}^\ell
  a_{B_j}\rangle \ge0
   \end{eqnarray}
   shall hold and the equality
  $$
  a_{B_1} \plus  \ldots\plus  a_{B_\ell} = b_{B'_1} \plus  \ldots\plus  b_{B'_m}
  $$
  shall imply that
  \begin{eqnarray}\label{eq:a=b}
   \sum_{i=1}^\ell  \lnorm a_{B_i} \rnorm +
    e\sum_{i=1}^{\ell-1} \langle a_{B_i},  \sum_{j=i+1}^\ell
    a_{B_j}\rangle
  =
 \sum_{i=1}^m  \lnorm b_{B'_i} \rnorm +
e  \sum_{i=1}^{m-1} \langle b_{B'_i},  \sum_{j=i+1}^m
    b_{B'_j}\rangle
    .
  \end{eqnarray}
  Then,
  a nonnegative function $\lnorm .\rnorm$ exists, such that
  \begin{eqnarray}\label{eq:prim.sp}
   e \langle \nu , \eta\rangle =
         \lnorm
    \nu  \plus \eta \rnorm - \lnorm \nu \rnorm - \lnorm \eta \rnorm 
    \quad \forall \nu,\eta\in\GF.
  \end{eqnarray}
 
\end{proposition}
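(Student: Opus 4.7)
The plan is to construct $\lnorm \cdot \rnorm$ equivalence class by equivalence class and then glue along $\plus$. Within each $B \in \basis \setminus \{Z\}$, Proposition \ref{prop:construction1} applied to the chosen representative $\xi_B$ and the common sign $e$ already produces $\lnorm a_B \rnorm$ for every $a_B \in B$ via the stated formula; on the ``zero'' semigroup $Z$ I would set $\lnorm \varepsilon \rnorm := 0$, which is consistent both with the defining property $\langle \varepsilon, \varepsilon \rangle = 0$ and with the postulated vanishing of $\langle \xi, \varepsilon \rangle$.

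Since $\basis$ generates $\GF$, every $\nu \in \GF$ admits a decomposition $\nu = a_{B_1} \plus \cdots \plus a_{B_\ell}$ with $a_{B_i} \in B_i \in \basis$, and I would set
\begin{eqnarray*}
  \lnorm \nu \rnorm &:=& \sum_{i=1}^\ell \lnorm a_{B_i} \rnorm + e \sum_{i=1}^{\ell-1} \Bigl\langle a_{B_i}, \sum_{j=i+1}^\ell a_{B_j} \Bigr\rangle.
\end{eqnarray*}
Hypothesis \eqref{eq:a>0} gives non-negativity, and hypothesis \eqref{eq:a=b} gives independence of the chosen decomposition; together they turn the formula into a well-defined non-negative function on $\GF$. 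Note that for $\ell = 1$ and $\nu = a_B \in B \setminus Z$ this reduces to the value already produced by Proposition \ref{prop:construction1}, and for $\ell = 1$ with $\nu \in Z$ it collapses to $0$.

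To verify \eqref{eq:prim.sp}, fix $\nu = a_{B_1} \plus \cdots \plus a_{B_\ell}$ and $\eta = b_{B'_1} \plus \cdots \plus b_{B'_m}$ so that, by hemi-associativity, the concatenation $a_{B_1} \plus \cdots \plus a_{B_\ell} \plus b_{B'_1} \plus \cdots \plus b_{B'_m}$ represents $\nu \plus \eta$. Applying the defining formula to this concatenation and subtracting $\lnorm \nu \rnorm + \lnorm \eta \rnorm$ leaves exactly
\begin{eqnarray*}
  e \sum_{i=1}^{\ell-1} \Bigl[ \Bigl\langle a_{B_i}, \sum_{k=i+1}^\ell a_{B_k} \plus \eta \Bigr\rangle - \Bigl\langle a_{B_i}, \sum_{k=i+1}^\ell a_{B_k} \Bigr\rangle \Bigr] + e \langle a_{B_\ell}, \eta \rangle.
\end{eqnarray*}
Rewriting each bracketed difference via the weak linearity relation \eqref{eq:sp.linear} turns it into $\langle \sum_{k=i}^\ell a_{B_k}, \eta \rangle - \langle \sum_{k=i+1}^\ell a_{B_k}, \eta \rangle$, and summing over $i$ telescopes to $\langle \nu, \eta \rangle - \langle a_{B_\ell}, \eta \rangle$, which is absorbed by the remaining $e \langle a_{B_\ell}, \eta \rangle$ term to yield $e \langle \nu, \eta \rangle$, as required.

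The principal obstacle, and the reason the hypothesis list is as long as it is, is that across distinct equivalence classes there is no intrinsic analogue of the consistency condition \eqref{eq:Mxi}: neither non-negativity nor independence of the decomposition can be extracted from the kernel $\langle \cdot, \cdot \rangle$ alone. Conditions \eqref{eq:a>0} and \eqref{eq:a=b} postulate exactly these two properties, so the remaining work reduces to the bookkeeping telescoping above, structurally parallel to the single-class argument already carried out in Proposition \ref{prop:construction1}.
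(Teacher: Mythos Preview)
Your argument is correct, but it is organized differently from the paper's. The paper proceeds by a maximality/extension argument: it starts from the sub-semigroup generated by $Z$ (or the class containing $Z$), assumes a largest sub-semigroup $U\subsetneq\GF$ on which \eqref{eq:prim.sp} already holds, and then adjoins one further class $A\in\basis$ by setting $\lnorm a_A\plus\eta\rnorm:=e\langle a_A,\eta\rangle+\lnorm a_A\rnorm+\lnorm\eta\rnorm$, appealing to \eqref{eq:a=b} for soundness and \eqref{eq:a>0} for non-negativity; this contradicts maximality. You instead write down the value of $\lnorm\nu\rnorm$ globally from any $\plus$-decomposition into basis elements, invoke \eqref{eq:a>0} and \eqref{eq:a=b} once, and then verify \eqref{eq:prim.sp} directly by the telescoping identity obtained from \eqref{eq:sp.linear}.

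The two routes use exactly the same hypotheses for the same purposes; the difference is purely architectural. Your direct construction has the advantage of making the verification of \eqref{eq:prim.sp} fully explicit (the paper's inductive step leaves that check implicit), while the paper's extension argument makes clearer that the construction can be carried out class by class and highlights the role of $Z$ as the base case. One small point: your treatment tacitly assumes $Z$ is itself an equivalence class; the paper notes the alternative that $Z$ may sit inside some larger $B\in\basis$, in which case one simply begins with that class rather than with $Z$ alone.
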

\begin{proof}
  If $Z$ is an equivalence class, define
  $\lnorm \varepsilon \rnorm =0$ for all $\varepsilon \in Z$, so that 
  \eqref{eq:prim.sp}
  holds on $Z$. Otherwise start with the equivalence class that
  contains $Z$.
  Let $U$ be a sub-semigroup of $\GF$, generated by the equivalence
  class
  that contains $Z$ and some other
  elements of $\basis$.
  Assume that $U\subsetneq \GF $ is a largest sub-semigroup
  such that \eqref{eq:prim.sp} holds, i.e., 
  which cannot be extended to a larger sub-semigroup.
  We will get a contradiction by constructing an extension.
  Let $A \in \basis$ such that $A \cap \GF \setminus U $ is
  not empty.
  For $\eta \in U$ and $a_A\in A$ we define
  $$
  \lnorm a_A \plus \eta \rnorm := e \langle a_A, \eta\rangle +
   \lnorm a_A \rnorm + \lnorm  \eta \rnorm 
   .$$
   The definition is sound by \eqref{eq:a=b} and $
  \lnorm a_A \plus \eta \rnorm\ge0$ by \eqref{eq:a>0}.
\endproof\end{proof}

\begin{definition}\label{def:scoring.rule}
  Let $\GF$ be a non-empty set. A non-negative, non-constant kernel  $S : \GF \times \GF
  \rightarrow [0,\infty)$ is called a \emph{proper scoring rule},
  if $S(\xi,\xi)=0$ for all $\xi \in \GF$.
\end{definition}

 \begin{remark}
   The two standard definitions of a
   proper scoring rule  $S$ are that $S(\eta, \xi)
   \ge S(\xi, \xi)$ for all $\eta,\xi \in \GF$,
   or, alternatively, that $S(\eta, \xi)
   \le S(\xi, \xi)$  for all $\eta,\xi \in \GF$, where
   $G$ is a set of distributions. The map
   $(\eta,\xi) \mapsto S(\eta, \xi) - S(\xi, \xi)$
   is called the  \emph{associated divergence function} 
   of the scoring rule
   \citep{gneitingraftery07}.
   Obviously,  the associated divergence function is 
   a  proper scoring rule with $S(\xi,\xi)=0$ for all $\xi \in \GF$.
 \end{remark}

\begin{proposition}\label{prop:scoring.rule}
  Let $\GF$ a non-empty set, $\plus$ a binary operator
  and $\lnorm .\rnorm$ a non-negative function on $\GF$.
  Let
  \begin{eqnarray}
    \label{eq:scoring.rule}
    A =  \{ a \in \RR :  a(\lnorm \eta\plus \xi \rnorm 
    -\lnorm \xi \plus \xi\rnorm + \lnorm \xi \rnorm - \lnorm  \eta\rnorm) \ge
    \lnorm  \xi\rnorm - \lnorm \eta \rnorm \quad \forall \xi,\eta \in \GF\}
  \end{eqnarray}
  and let $r_a(\eta, \xi) := a\lnorm\eta \plus \xi \rnorm
  + (1-a) (\lnorm \xi \rnorm + \lnorm \eta\rnorm)$.
  Then, for all $a\in A$, the map
  $$
  (\eta, \xi) \mapsto r_a(\eta, \xi) - r_a(\xi, \xi)
  $$
  is a proper scoring rule.
  Reversely, let $S$ be a proper scoring rule  on $\GF \times \GF $
  for a non-empty set $\GF$
  such that 
  \begin{eqnarray}
    \label{eq:supS}
    \sup_{\xi,\eta, S(\eta,\xi) > 0}
    \frac{S(\eta, \xi) }{ S(\omega, \eta)
    +  S(\omega, \xi)}
    < \infty      
  \end{eqnarray}
  holds for some $\omega \in \GF$.
  Then, a semigroup
  $(\GF',\plus)$ 
  and a non-negative function $\lnorm .\rnorm : \GF'
  \rightarrow [0,\infty)$
  and an embedding $\iota : \GF \rightarrow \GF'$
  exist,
  such that $A\cap \Xi\not= \emptyset$ and
  $S(\eta,\xi) =\rho_{a}(\iota(\eta), \iota(\xi))$ for some 
  $a \in A\cap \Xi$.
\end{proposition}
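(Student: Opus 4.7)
The forward implication is an algebraic rearrangement. Expanding the definition,
\begin{equation*}
r_a(\eta,\xi) - r_a(\xi,\xi) = a\bigl[\lnorm \eta \plus \xi \rnorm - \lnorm \xi \plus \xi \rnorm + \lnorm \xi \rnorm - \lnorm \eta \rnorm\bigr] - \bigl(\lnorm \xi \rnorm - \lnorm \eta \rnorm\bigr),
\end{equation*}
and the required non-negativity is exactly the condition defining $A$ in \eqref{eq:scoring.rule}. Vanishing on the diagonal is immediate, and non-constancy holds whenever $\lnorm\cdot\rnorm$ itself is non-constant, since the expression above then fails to vanish identically for any $a$ apart from at most one exceptional value.

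The reverse direction is the substantive part. The plan is to realise $S$ as $\rho_a$ on a minimal entropy-driven hemi-group built around $\iota(\GF)$. I would adjoin to $\iota(\GF)$ a formal neutral element $\star$ together with formal symbols for each pair $\iota(\eta)\plus\iota(\xi)$ and $\iota(\eta)\circ\iota(\xi)$, extending $\plus$ associatively by routing any further combinations into a ``sink'' whose entropy factorises additively as $\lnorm x\plus y\rnorm = \lnorm x\rnorm + \lnorm y\rnorm$. Fix a real $a < 0$, and set
\begin{equation*}
\lnorm \star \rnorm := 0,\qquad \lnorm \iota(\eta) \rnorm := c\,S(\omega,\eta),\qquad \lnorm \iota(\eta) \circ \iota(\xi) \rnorm := \lnorm \iota(\eta)\rnorm + \lnorm \iota(\xi)\rnorm,
\end{equation*}
and then solve the target equation $S(\eta,\xi) = \rho_a(\iota(\eta),\iota(\xi))$ for the $\plus$-entropy, giving
\begin{equation*}
\lnorm \iota(\eta) \plus \iota(\xi) \rnorm := \frac{1}{a}\,S(\eta,\xi) + \frac{a-1}{a}\bigl(\lnorm \iota(\eta)\rnorm + \lnorm \iota(\xi)\rnorm\bigr).
\end{equation*}

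Hypothesis \eqref{eq:supS} supplies a finite $K$ with $S(\eta,\xi) \le K\bigl(S(\omega,\eta)+S(\omega,\xi)\bigr)$, and choosing $c \ge K/(1-a)$ makes the right-hand side above non-negative, so $\lnorm\cdot\rnorm$ is a valid entropy; the choice $\detsys := \{\star\}$ then realises \eqref{eq:GFGS} by construction. The ratio $\lnorm \iota(\eta)\plus\iota(\xi)\rnorm / \lnorm \iota(\eta)\circ\iota(\xi)\rnorm$ simplifies to $(ac)^{-1}r(\eta,\xi) + (a-1)/a$ with $r \in [0,K]$, and with $c = K/(1-a)$ one finds $m_{\GF'} = 0$ and $M_{\GF'} = 1 - 1/a$, whence $\Xi = [a, 1]$ and in particular $a \in \Xi$. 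Substituting the defined entropies into the $A$-condition \eqref{eq:scoring.rule} collapses the inequality to $S(\eta,\xi) \ge 0$, so $a \in A$ as well; a direct substitution finally verifies $\rho_a(\iota(\eta), \iota(\xi)) = S(\eta,\xi)$.

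The chief obstacle is bookkeeping around the semigroup $\GF'$: the suprema defining $\Xi$ range over \emph{all} of $\GF'$, so the associative closure must be arranged so that iterated formal sums do not widen $M_{\GF'}$ past $1 - 1/a$ nor drag $m_{\GF'}$ below $0$. The sink construction above neutralises this because every sum involving a sink element contributes a ratio of exactly $1$, lying safely inside $[m_{\GF'}, M_{\GF'}]$; the remaining hemi-group axioms are then routine, and one obtains the desired $a \in A \cap \Xi$.
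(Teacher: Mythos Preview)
Your approach is essentially the same as the paper's: solve the equation $\rho_a=S$ for $\lnorm\iota(\eta)\plus\iota(\xi)\rnorm$, take $\lnorm\iota(\eta)\rnorm$ proportional to $S(\omega,\eta)$, and use \eqref{eq:supS} with $a<0$ to force non-negativity. Your formula for $\lnorm\iota(\eta)\plus\iota(\xi)\rnorm$ is exactly the paper's, up to your extra scaling parameter $c$ (the paper fixes $c=1$ and instead pushes $a$ further negative, requiring $a<1-K$; this is equivalent to your $c\ge K/(1-a)$). You are also more explicit than the paper in checking $a\in\Xi$ and $a\in A$, which is welcome.

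The one place where the paper is sharper is the realisation of $(\GF',\plus)$. You build $\GF'$ out of formal symbols, a neutral element, and a ``sink'' layer, and then argue informally that iterated sums land in the sink with ratio $1$. This works in spirit, but the associativity and the claim that nothing outside $\iota(\GF)\times\iota(\GF)$ disturbs $[m_{\GF'},M_{\GF'}]$ are asserted rather than verified. The paper avoids this bookkeeping entirely by taking $\GF'=\GF\times\GF$ with the concrete product
\[
(\nu,\eta)\plus(\lambda,\xi)=
\begin{cases}
(\lambda,\xi),&\nu=\eta=\omega,\\
(\nu,\xi),&\nu\neq\omega,\\
(\eta,\xi),&\nu=\omega,\ \eta\neq\omega,
\end{cases}
\]
embedding $\iota(\eta)=(\omega,\eta)$ and setting $\lnorm(\eta,\xi)\rnorm=a^{-1}S(\eta,\xi)+(1-a^{-1})(S(\omega,\eta)+S(\omega,\xi))$. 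This is already an associative semigroup with left neutral element $(\omega,\omega)$, and every element of $\GF'$ is either in $\iota(\GF)$ or already of the form $\iota(\eta)\plus\iota(\xi)$, so no sink or further closure is needed. If you replace your formal-symbol construction by this concrete one, the rest of your argument goes through unchanged and your verification of $a\in A\cap\Xi$ then completes the proof cleanly.
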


\begin{proof}
  Part 1 is obvious.
  Reversely, let \eqref{eq:supS} hold.
 %   Note that the left hand side of \eqref{eq:supS} is at least $1$.
   Let $ \GF' = \GF \times \GF$ and identify $\GF$ with $\{\omega\} \times \GF
   \subset \GF '$. Let for $(\nu, \eta), (\lambda, \xi)\in \GF '$
    $$
   (\nu,\eta)\plus (\lambda, \xi) := \left\{
     \begin{array}{ll}
       ( \lambda,\xi), & \nu = \eta= \omega\\
       ( \nu,\xi), &\nu \not =\omega\\
      (\eta,\xi),   &  \nu  =\omega, \eta \not = \omega % \text{else}
       \end{array}
    \right.
    .
    $$
    so that $(\omega, \omega)$ is a left neutral
    element of the semigroup $(\GF ',\plus)$.
    Let
   % \begin{eqnarray}      \label{eq:sf1}
%   \lnorm (\eta,\omega) \rnorm :=    S(\eta, \omega)          \end{eqnarray}   and
    \begin{eqnarray}\label{eq:sf2}
    \lnorm (\eta, \xi) \rnorm
    &:=& a^{-1} S(\eta, \xi)
         + (1 - a^{-1})(S (  \omega, \eta)+ S ( \omega, \xi))
    \end{eqnarray}
    for some $a \in\RR\setminus\{0\}$.
    If
    \begin{eqnarray}\label{eq:sf3}
            a &<&  1 -  \sup_{\eta,\xi, S(\eta, \xi) > 0}
    \frac{S(\eta, \xi) }{ S( \omega, \eta)
                  +  S( \omega, \xi)}  % \le 0!!!
                  .
    \end{eqnarray}
    then $a$ is negative, %, $a \in A$, % in [\eta +\xi] und [xi + xi] die def einsetzen
    the map
    $\lnorm . \rnorm$ is non-negative, and
    $\rho_a((\omega, \eta),( \omega, \xi)) = S(\eta, \xi)$.
    % discrete topological space.

    \note{\color{red}
    We have
    \begin{eqnarray*}
      a \lnorm (\omega, \eta) \rnorm
      &= &
           S_*(\omega, \eta)-
           (1-a)(S_*(\omega, \omega) + S_*(\omega, \eta))
      \\&=& a  S_*(\omega, \eta) + (1-a) S_*(\omega, \omega)
      %\\&=&     a  S(\omega,      \eta) + a f(\eta) -(1-a) f(\omega)
     \end{eqnarray*}  
    \begin{eqnarray*}
      \rho_a((\eta, \omega),(\xi, \omega))
      &= &
      a  \lnorm (\eta, \xi) \rnorm  + (1- a) (    \lnorm (\omega, \eta)
           \rnorm   +            \lnorm (\omega, \xi) \rnorm)
      \\&=& S_*(\eta, \xi) - (1-a) (S_*(\omega, \eta)
            +  S_*( \omega, \xi))
      \\&&+ \frac{1-a}a (a  S_*(\omega, \eta) - (1-a) S_*(\omega,
           \omega) + a  S_*(\omega, \xi) - (1-a) S_*(\omega, \omega))
      \\&=&   S_* (\eta, \xi) - 2\frac{(1-a)^2}aS_*(\omega, \omega)
    \end{eqnarray*}
    Further,
   \begin{eqnarray*}
      a \lnorm (\omega, \eta) \rnorm(S (  \omega, \eta)+ S ( \omega,
     \xi))
     &=& \frac{S(\eta, \xi)}{S (  \omega, \eta)+ S ( \omega, \xi)}
         - 1 + a \le 0
   \end{eqnarray*}
   iff the given condition is satisfied.

   FUNKTIONIERT NICHT
   
     More generally, let $S_*(\eta, \xi) = S(\eta, \xi) + f(\xi)$
   for some function $f : \GF \rightarrow \RR$ and $S$ a proper scoring
   rule
   in the sense of Definition \ref{def:scoring.rule}.
   Replacing $S$ by $S_*$
   in \eqref{eq:sf2} and \eqref{eq:sf3},
%   in the above definitions of $\lnorm   . \rnorm$,
   it can be easily seen that $A$ is independent of $f$,
   and  $\lnorm . \rnorm$ remains non-negative as long as
   $f$ is non-negative,
   \eqref{eq:supS} is satisfied and $a<0$.
   E.g. for the CRPS,  $S_*(\xi, \xi)$
     
       \begin{eqnarray}
    \lnorm (\eta, \xi) \rnorm_*
    &:=& a^{-1} (S(\eta, \xi) + f(\xi) 
         - (1-a)(S (\omega, \xi) + f(\xi) + S (\omega, \xi) + f(\xi)))
         \\=&=&\lnorm (\xi, \xi) \rnorm + f(\xi) - \frac{1-a}a f(\xi)
      .
    \end{eqnarray}
   
  }
\endproof\end{proof}

   Note that
   $r_a$ in Proposition \ref{prop:scoring.rule}
   is real valued, in general. To obtain non-negativity, an
   additional condition
   %according to Proposition \ref{lemma:reverse.scalar}
   is necessary.
   For instance, if 
%   \end{remark}\begin{remark}
%  We have $0\not\in A$ for $A$ given by \eqref{eq:scoring.rule}.
%  If $(\GF ,\plus)$ is a group, then $A \subset (-\infty,1]$.
   $\lnorm \eta \plus  \xi\rnorm  \le  (\lnorm \eta
  \rnorm  +\lnorm   \xi\rnorm ) ({a-1})/a$
  and $2 \lnorm \xi \rnorm (a-1) / a \ge  \lnorm \xi \plus \xi \rnorm $
  for some $a <0$, then
  $a \in \Xi \cap A$.
%  so that $S(\eta, \xi) = \| \eta -\xi \|^2_H$ is always a proper
%  scoring rule
%  The two conditions are satisfied for $\aopt=-1$ of the Hilbert
%  space.
%  Leider trivial!!

  \note{
    Condition \eqref{eq:supS} should be satisfied for the brier score
    with $\omega = (1,0,0, \ldots)$.
  }

\section{Transformation of systems: scale invariance}
\label{sec:trafo}
  Up to now, we have introduced 
   the operator $\circ$, which allows to consider two independent systems, and
   the 
   operator $\plus$, which  allows to compare two different systems.
   In many practical applications, we need
   a third operation, denoted by $\cdot$, which models
   certain modifications of a given system.
   For instance, the  principal component
   analysis
   (PCA)
   projects data from a
   higher dimensional space into a lower dimensional one in some
   optimal way.
   In the following, $(\GF , \cdot)$ will model transformations of a
   system,
   that are necessary
   to deal with mappings such as  projections in a PCA \citep{schlatherreinbott21}.

\begin{definition}\label{def:entropy.hemi-ring}
   Let $(\GF,\circ,  \plus, \lnorm .\rnorm)$ be a
   entropy-driven hemi-group with %hemi-associative,
   comparable
     elements and $\GF =\GFsup$ in Definition
     \ref{def:entropy.hemi-group}.
     Let $\cdot$ be an additional binary operator.
     An element $\xi\in \GF$ is called a
  \emph{deterministic transformation}
  if 
  $$\lnorm \xi  (\nu \circ \eta) \rnorm =
  \lnorm \xi  \nu
  \circ  \xi  \eta\rnorm
 % \text{ and } \lnorm (\nu \circ \eta)  \xi \rnorm = \lnorm  \nu \xi \circ \eta \xi\rnorm
  \quad \forall \nu,\eta \in \GF
.  $$
  Let $\dettrafo $ be the set of all deterministic
  transformations and assume, 
  that
  $$%\GF_0
  \detsys
  \subset \dettrafo 
  .$$
   We assume further that 
   $\xi \nu , \nu \xi \in \detsys$ for all $\nu \in \dettrafo, \xi \in \detsys$.    
  Let the operators $\plus$ and  $\cdot$ be  right hemi-distributive
%   for deterministic transformations
     i.e.,
    $$
    \lnorm (\xi \plus \nu) \eta \rnorm = 
    \lnorm \xi \eta \plus \nu \eta \rnorm
    \qquad \forall \xi,\eta, \nu \in \GF %\dettrafo 
    .$$
  Then,
  $(\GF,\circ, \plus, \cdot, \lnorm .\rnorm)$ is called an
  entropy-driven hemi-ring.
  A deterministic transformation $e \in \dettrafo$ is called a  \emph{left entropy-invariant
    transformation} if 
  $$
%  \lnorm \xi (\eta \plus\nu) \rnorm =
  \lnorm e\eta \plus e\nu \rnorm =
  \lnorm \eta \plus \nu \rnorm
  \quad
  \forall \nu,\eta\in\GF
  .
  $$
 Let $\GF_e\subset \dettrafo$ be the set of all left entropy-invariant transformations.
 \end{definition}

Let $x_1,\ldots, x_n$ be a set of $d$-dimensional data and $\xi \in \RR
\setminus\{0\}$.
In  PCA, %$k$-dimensional PCA, $k < d$,
the rescaled data $\xi x_1,\ldots, \xi x_n$ are subject to the same
optimal projection map (onto a $k$-dimensional hyperplane with $k<d$)
as the original data $x_1,\ldots, x_n$.
The algebraic representation of this property is called
\emph{scale invariance} in the following definition.

 \begin{definition}\label{def:scale.invariant}
  Let $\GF$ be an entropy-driven hemi-ring.
  An element $\xi \in \GF$ is called a left rescaling if
  a constant  $c_\xi \ge 0$ exists such that
   $$
  \lnorm \xi \nu\rnorm =
  % \lnorm \xi \rnorm
  c_\xi
  \lnorm \nu \rnorm
  \quad
  \forall \nu\in\GF\  %(\dettrafo??)
  .$$
  If $
  \lnorm \nu\xi \rnorm =
  % \lnorm \xi \rnorm
  \tilde c_\xi
  \lnorm \nu \rnorm
  $ for all $\nu\in\GF$ and some $\tilde c_\xi \ge 0$ and all $\xi\in \GF$,
  then $\xi$ is called a \emph{right
    rescaling}.
  Let  $\GF_l$ ($\GF_r$) be the set of left (right) rescalings.
  If $\GF = \GF_l$ 
  and $c_\xi = c \lnorm \xi\rnorm$ for some $c>0$, then both  $\GF$ and its entropy are called
 scale-invariant.   

  \end{definition}

 \begin{remark}
   Invariance is not a property that appears incidentially, but
   is driven by practical considerations and the entropy has to be
   defined accordingly. The terms ``determinstic transformation'' and
   ``determininistic system'' are abstract, but should cover the
   practical point of view in many cases if we identify a constant map
   onto a deterministic system with the deterministic system itself.
 \end{remark}

 \subsection{Properties}

 \begin{lemma}\label{rem:rescaling}
   Let $\GF$ be an entropy-driven hemi-ring.
   Then, the following assertions hold.
   \begin{enumerate}
   \item For $e\in \GF_e$ and $\xi \in \GF$ we have $\lnorm e\xi\rnorm =\lnorm \xi\rnorm$.
   \item 
     Assume that the
     set of rescalings $\GF_{rl} =\GF_r \cap \GF_l$ is not empty.
     If $\lnorm \nu \rnorm >0$ for some $\nu \in \GF_{rl}$, then
     a constant $c\ge 0$ exists such that
     for all $\xi \in \GF_{rl}$ we have
     $c_\xi = c \lnorm \xi \rnorm$.
     If, additionally, a left entropy-invariant transformation $e$
     exists, then $c= \lnorm e  \rnorm ^{-1}>0$.
   \item 
     If $\GF $ is scale-invariant, then $\GF_r=\GF $.
     Reversely, if $\GF =\GF_r = \GF_l$ and
     a left entropy-invariant transformation
     exists, then $\GF $ is scale-invariant.
  \end{enumerate}

 \end{lemma}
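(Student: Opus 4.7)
The plan is to prove the three parts sequentially, leveraging the closure property that $\xi\nu,\nu\xi \in \detsys$ for $\nu\in\dettrafo$ and $\xi\in\detsys$, together with the additive absorption of $\detsys$ given by \eqref{eq:GFGS}.

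For Part~1, I would pick any $\varepsilon\in\detsys$ and use the closure hypothesis to conclude $e\varepsilon\in\detsys$ (since $e\in\GF_e\subset\dettrafo$). Left entropy-invariance of $e$ then gives $\lnorm e\xi\plus e\varepsilon\rnorm = \lnorm\xi\plus\varepsilon\rnorm$, and applying \eqref{eq:GFGS} to both sides (once with the deterministic system $e\varepsilon$, once with $\varepsilon$) collapses this identity to $\lnorm e\xi\rnorm = \lnorm\xi\rnorm$.

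For Part~2, I would evaluate $\lnorm\xi\nu\rnorm$ for $\xi,\nu\in\GF_{rl}$ in two ways using the left- and right-rescaling properties, namely
\[
c_\xi\,\lnorm\nu\rnorm \;=\; \lnorm\xi\nu\rnorm \;=\; \tilde c_\nu\,\lnorm\xi\rnorm.
\]
Fixing the $\nu\in\GF_{rl}$ with $\lnorm\nu\rnorm>0$ and setting $c := \tilde c_\nu/\lnorm\nu\rnorm$ yields $c_\xi = c\,\lnorm\xi\rnorm$ uniformly in $\xi\in\GF_{rl}$. For the refinement under the existence of $e\in\GF_e$, Part~1 gives $\lnorm e\nu\rnorm = \lnorm\nu\rnorm$, while the right-rescaling property of $\nu$ gives $\lnorm e\nu\rnorm = \tilde c_\nu\,\lnorm e\rnorm$; equating forces $\tilde c_\nu\,\lnorm e\rnorm = \lnorm\nu\rnorm > 0$, so $\lnorm e\rnorm>0$ and $c = 1/\lnorm e\rnorm$.

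For Part~3, if $\GF$ is scale-invariant then for all $\xi,\nu\in\GF$ one computes $\lnorm\nu\xi\rnorm = c_\nu\,\lnorm\xi\rnorm = c\,\lnorm\nu\rnorm\,\lnorm\xi\rnorm$, exhibiting $\xi$ as a right rescaling with $\tilde c_\xi = c\,\lnorm\xi\rnorm$; hence $\GF_r=\GF$. Conversely, assuming $\GF=\GF_r=\GF_l$, pick any element with positive entropy (guaranteed by $\GF_0\neq\GF$ in Definition~\ref{def:entropy}) to satisfy the hypothesis of Part~2, which then delivers $c_\xi = c\,\lnorm\xi\rnorm$ globally; the existence of $e\in\GF_e$ upgrades this to $c = 1/\lnorm e\rnorm > 0$, completing scale-invariance. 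The main subtlety throughout is the positivity assertion $c>0$ (equivalently $\lnorm e\rnorm>0$) in Part~2, which is not evident from definitions and must be extracted precisely from the interplay between Part~1 and the right-rescaling relation applied to a $\nu$ of positive entropy.
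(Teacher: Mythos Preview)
Your proposal is correct and follows essentially the same route as the paper: Part~1 via $e\varepsilon\in\detsys$ and \eqref{eq:GFGS}, Part~2 via the double evaluation $c_\xi\lnorm\nu\rnorm=\lnorm\xi\nu\rnorm=\tilde c_\nu\lnorm\xi\rnorm$ with the positivity of $\lnorm e\rnorm$ extracted from $\lnorm e\nu\rnorm=\lnorm\nu\rnorm$, and Part~3 by direct computation and appeal to Part~2. Your explicit mention of the existence of a positive-entropy element (via $\GF_0\neq\GF$) to feed into Part~2 is a detail the paper leaves implicit.
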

  \begin{proof}
    \begin{enumerate}
    \item For all $\xi\in\GF$, $\varepsilon \in \detsys$ and $e \in \GF_e$ we have
      $\lnorm e \xi \rnorm = \lnorm e \xi \plus e \varepsilon \rnorm =
      \lnorm  \xi\rnorm$.
    %  Hence,
%      $ \lnorm e \rnorm = \lnorm e\cdot e \rnorm =c_e  \lnorm e
%      \rnorm^2 =  c_e  \lnorm e^2 \rnorm      =c_e  \lnorm e \rnorm $
    \item
      For all $\xi,\nu\in\GF_{rl}$ 
      we have
  $ \lnorm \xi \nu \rnorm = c_\xi  \lnorm \nu \rnorm = \tilde
      c_\nu  \lnorm \xi \rnorm
      $.      
      If $\lnorm \nu \rnorm >0$ then
      $ c  = {\tilde c_\nu}/{\lnorm  \nu \rnorm} $
      is a suitable choice.
      If $e\in\GF_e$, then,
      $
      0 < \lnorm \nu \rnorm = \lnorm e \nu \rnorm
       =  \lnorm e\rnorm \cdot c \lnorm \nu \rnorm
       $
       and neither $c$ nor  $\lnorm e\rnorm$ can
       be $0$.
    
     \item For all $\nu \in \GF $ we have
      $$
      \lnorm \xi \nu \rnorm =
      \lnorm \xi \rnorm \cdot c  \lnorm \nu \rnorm
      \qquad
      \forall \xi \in \GF 
      ,
      $$
      so that $\GF_r = \GF $.
      The reverse statement follows immediately from the first part.
     \end{enumerate}
  \endproof\end{proof}

\begin{proposition}\label{prop:scalar2}
  Let $(\GF,\circ, \cdot, \plus, \lnorm .\rnorm)$ be an
  entropy-driven  hemi-ring with comparable elements.
  If, additionally, $\GF $ is scale-invariant
  and a left entropy-invariant transformation $e$  exists, then
   the function
  $\xi \mapsto \lnorm \xi \plus \xi \rnorm - 2\lnorm \xi \rnorm$
  does not change sign and
  \begin{eqnarray}
    \label{eq:rho.scaling}
    \lnorm e \rnorm \rho_a(\xi \nu, \eta \nu)
    & = &  \lnorm \nu \rnorm \rho_a(\xi, \eta ), \quad \forall a \in \Xi
    ,\\\label{eq:scalar.scaling}
   \lnorm e \rnorm \langle \xi \nu, \eta \nu \rangle_*
    & = &  \lnorm \nu \rnorm \langle \xi, \eta \rangle_*
          \qquad \mbox{for } \langle \cdot , \cdot  \rangle_*= \langle \cdot , \cdot  \rangle,\;
          \langle \cdot , \cdot  \rangle_2 \mbox{ or } \langle \cdot , \cdot  \rangle_\assoc
    ,\\\nonumber
    \langle \xi , \xi \rangle_\assoc =0 &\Leftrightarrow & \lnorm \xi \rnorm = 0
                                                \mbox{ or }  \langle
                                                \eta ,  \eta \rangle_\assoc
                                                = 0 \;\forall \eta \in
                                                           \GF
                                                           .
  \end{eqnarray}
   If, furthermore, $ \lnorm e \plus e \rnorm  \not= 2\lnorm e \rnorm$, then
  the following assertions hold:% for all $\xi \in \GF$ and $\varepsilon \in \detsys$:
  \begin{eqnarray}\nonumber\label{eq:rho1}
    \rho_a(\xi ,\xi)
    &=& 0 \quad \forall \xi \in \GF, \mbox{if } a^{-1} = 1- \frac{\lnorm e \plus e
        \rnorm }{2 \lnorm e \rnorm},
        \note{\hbox{$a\in \Xi$ always!}}
    \\ \label{eq:scalar.Gs}
\langle \xi, \xi\rangle_\assoc=0
                                              &\Leftrightarrow& \xi
                                                                \in \GF_0
  \end{eqnarray}
\end{proposition}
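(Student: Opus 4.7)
The plan is to reduce every assertion to a single master scaling identity obtained by playing right hemi-distributivity off the entropy-invariance of $e$. First I will collect the constants: Lemma~\ref{rem:rescaling}, combined with scale-invariance of $\GF$ and the existence of $e$, fixes $c=1/\lnorm e\rnorm$, so that $\lnorm\xi\nu\rnorm = \lnorm\xi\rnorm\lnorm\nu\rnorm/\lnorm e\rnorm$ for all $\xi,\nu\in\GF$. Right hemi-distributivity then gives
\[
\lnorm\xi\nu \plus \eta\nu\rnorm \;=\; \lnorm(\xi\plus\eta)\nu\rnorm \;=\; \frac{\lnorm\xi\plus\eta\rnorm\,\lnorm\nu\rnorm}{\lnorm e\rnorm},
\]
while the entropy-additivity of $\circ$ yields the analogous identity $\lnorm\xi\nu\circ\eta\nu\rnorm = \lnorm\xi\circ\eta\rnorm\lnorm\nu\rnorm/\lnorm e\rnorm$. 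Inserting these two identities into \eqref{eq:rhoc} and \eqref{eq:sp.assoc} produces the rescaling identities \eqref{eq:rho.scaling} and \eqref{eq:scalar.scaling} at once; the variants $\langle\cdot,\cdot\rangle$ and $\langle\cdot,\cdot\rangle_2$ only differ from $\langle\cdot,\cdot\rangle_\assoc$ by a constant factor, so they inherit \eqref{eq:scalar.scaling} verbatim. The boundary case $\lnorm\nu\rnorm=0$ is harmless: both sides then vanish independently.

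Next I specialize to $\xi=\eta=e$ and exploit that $e$ is left entropy-invariant, giving $\lnorm e\nu\plus e\nu\rnorm = \lnorm\nu\plus\nu\rnorm$; combined with the scaling formula and $\lnorm e\nu\rnorm=\lnorm\nu\rnorm$ (part~1 of Lemma~\ref{rem:rescaling}) this yields the master identity
\[
\lnorm\nu\plus\nu\rnorm - 2\lnorm\nu\rnorm \;=\; \frac{\lnorm\nu\rnorm}{\lnorm e\rnorm}\bigl(\lnorm e\plus e\rnorm - 2\lnorm e\rnorm\bigr).
\]
Since $\lnorm\nu\rnorm/\lnorm e\rnorm\ge 0$, the sign of the left-hand side is constant across $\nu\in\GF$; this is the first claim and simultaneously fixes $\signXi$ whenever the right-hand factor is nonzero. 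Substituting into the definition of $\langle\cdot,\cdot\rangle_\assoc$ gives $\langle\nu,\nu\rangle_\assoc = \signXi\,\lnorm\nu\rnorm(\lnorm e\plus e\rnorm - 2\lnorm e\rnorm)/\lnorm e\rnorm$. The equivalence at the end of the main block is then immediate: $\langle\xi,\xi\rangle_\assoc=0$ exactly when $\lnorm\xi\rnorm=0$ or the bracket vanishes, and the latter, via the same master identity applied at every $\eta\in\GF$, is equivalent to $\langle\eta,\eta\rangle_\assoc\equiv 0$.

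Finally, under the additional hypothesis $\lnorm e\plus e\rnorm\neq 2\lnorm e\rnorm$, I expand
\[
\rho_a(\xi,\xi) \;=\; 2\lnorm\xi\rnorm + a\bigl(\lnorm\xi\plus\xi\rnorm - 2\lnorm\xi\rnorm\bigr) \;=\; \lnorm\xi\rnorm\Bigl(2 + a\,\tfrac{\lnorm e\plus e\rnorm - 2\lnorm e\rnorm}{\lnorm e\rnorm}\Bigr),
\]
using the master identity. This expression vanishes for every $\xi\in\GF$ precisely when the parenthesis does, which solves to $a^{-1}=1-\lnorm e\plus e\rnorm/(2\lnorm e\rnorm)$, giving the first supplementary statement. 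Because the bracket in the expression for $\langle\xi,\xi\rangle_\assoc$ is now nonzero, the previous disjunctive equivalence collapses to $\langle\xi,\xi\rangle_\assoc=0\Leftrightarrow\lnorm\xi\rnorm=0\Leftrightarrow\xi\in\GF_0$, which is the second supplementary statement. The argument has no genuine obstacle; the only subtlety worth flagging is checking that the rescaling identities survive the case $\lnorm\nu\rnorm=0$, which they do trivially.
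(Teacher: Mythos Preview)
Your argument is correct and follows the same route as the paper: both hinge on the identity $\lnorm \xi \plus \xi \rnorm = \lnorm e\xi \plus e\xi \rnorm = \lnorm (e \plus e)\xi \rnorm = \lnorm e \rnorm^{-1}\lnorm e \plus e\rnorm\,\lnorm \xi \rnorm$, obtained from left entropy-invariance, right hemi-distributivity, and scale invariance together with Lemma~\ref{rem:rescaling}. The paper records only this master identity and declares the rest ``immediate''; you have additionally spelled out the derivations of \eqref{eq:rho.scaling}, \eqref{eq:scalar.scaling}, and the two supplementary statements, which is a welcome elaboration but not a different method.
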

\begin{proof}
  We have
  $$
    \lnorm \xi \plus  \xi \rnorm= 
 % \lnorm e(\xi \plus  \xi) \rnorm= 
  \lnorm e\xi \plus  e\xi \rnorm= 
 \lnorm (e \plus  e)\xi \rnorm= 
 \lnorm e \rnorm^{-1}\lnorm e \plus  e\rnorm\, \lnorm \xi \rnorm
 .
 $$
 The remaining assertions are immediate.
\endproof\end{proof}

\begin{proposition}\label{pprop:scale}    
  If $\lnorm \cdot \rnorm$ is scale invariant, then the
  following properties hold:
  \begin{enumerate}
%  \item $\lnorm 1_\GF\rnorm =1\in\RR$ for the
%    neutral element $1_\GF$ of $(\GF,\cdot)$.
%  \item $\dettrafo    = \GF_l= \GF$
%  \item A number $c > 0$ exists such that
 %   \begin{eqnarray}\label{eq:c}       c_\xi &= &c \lnorm \xi \rnorm    \end{eqnarray}
%   for   all $\xi$.
  \item If $\GF_0=\{0\}$, then the hemi-group $\GF$ is division free, i.e.,
    for all $\mu, \nu
    \in \GF$ with
    $\mu  \nu = 0$ we have $\mu = 0$ or
    $\nu = 0$.
  \item\label{pprop:scale.2} 
    Let $\GF \subset \RR$ be a non-trivial interval
    with standard topology and $1\in \GF$.
    If $(\GF,\cdot)$ is the standard multiplication
  %  in particular a semigroup with the
    with   multiplicative neutral element $1_\GF=1$
    and $\lnorm 1_\GF\rnorm$ is given,
    then $\lnorm \mu\rnorm = b |\mu|^\alpha$ 
  % and $|\mu \circ \nu| = (|\mu|^\alpha + |\nu|^\alpha)^{1/\alpha}$
    for some unique $\alpha\in\RR\setminus\{0\}$
    and some unique $b > 0$.
   \end{enumerate}
 \end{proposition}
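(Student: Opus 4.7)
Both parts rest on the defining identity of scale invariance, which by Definition~\ref{def:scale.invariant} reads $\lnorm \mu\nu\rnorm = c\,\lnorm\mu\rnorm \lnorm\nu\rnorm$ for some $c>0$ and all $\mu,\nu \in \GF$, converting the multiplicative structure on $\GF$ into a multiplicative structure on the non-negative reals.

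Part~1 is immediate: if $\mu\nu = 0$, then $\lnorm 0\rnorm = 0$ since $0 \in \GF_0 = \{0\}$, so $c\,\lnorm\mu\rnorm \lnorm\nu\rnorm = 0$ forces one of the factors to vanish; the hypothesis $\GF_0 = \{0\}$ then gives $\mu = 0$ or $\nu = 0$.

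For Part~2 I would set $f(\mu) := c\,\lnorm\mu\rnorm$, which is multiplicative: $f(\mu\nu) = f(\mu)f(\nu)$. Evaluating at $\mu=\nu=1$ yields $f(1)\in\{0,1\}$, and $f(1)=0$ is excluded since $f(\mu) = f(\mu\cdot 1) = f(\mu)f(1)$ would then force $f \equiv 0$, contradicting $\GF_0 \neq \GF$. Hence $f(1)=1$ and $c = 1/b$ with $b := \lnorm 1\rnorm > 0$. Continuity of $\lnorm\cdot\rnorm$ on $\GF\setminus\GF_0$ produces an open neighborhood $V$ of $1$ in $\GF^+ := \GF \cap (0,\infty)$ on which $f > 0$. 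The substitution $g(x) := \log f(e^x)$ on $\log V$ converts multiplicativity into Cauchy additivity, $g(x+y) = g(x)+g(y)$, while preserving continuity; the local Cauchy equation therefore yields $g(x) = \alpha x$ near $0$ for a unique $\alpha\in\RR$, i.e., $f(\mu) = \mu^\alpha$ on $V$. To propagate this to an arbitrary $\mu \in \GF^+$, I decompose $\mu = \nu_1\cdots\nu_n$ so that all factors and all partial products lie in $V$; such a decomposition exists because $\GF^+$ is a connected interval containing $1$, so a path $1 \to \mu$ inside $\GF^+$ can be subdivided into small multiplicative steps. Multiplicativity of $f$ then gives $f(\mu) = \prod_i \nu_i^\alpha = \mu^\alpha$, so $\lnorm\mu\rnorm = b\mu^\alpha$ on all of $\GF^+$.

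To extend to $\mu \le 0$: for $\mu = 0 \in \GF$ the value is fixed by the sign of $\alpha$; for $\mu < 0$ in $\GF$, scale invariance applied to $\mu^2 \in \GF^+$ gives $b\mu^{2\alpha} = \lnorm\mu^2\rnorm = c\lnorm\mu\rnorm^2 = \lnorm\mu\rnorm^2/b$, whence $\lnorm\mu\rnorm = b|\mu|^\alpha$ after taking the non-negative root. Uniqueness of $b$ is immediate from $b = \lnorm 1\rnorm$, and $\alpha$ is determined by evaluating the formula at any $\mu\in\GF$ with $|\mu|\neq 1$, which exists by non-triviality of $\GF$; the exclusion $\alpha \neq 0$ is forced by $\GF_0 \neq \emptyset$, since $\alpha = 0$ would give $\lnorm\cdot\rnorm \equiv b > 0$. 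The main technical obstacle I anticipate lies in the passage from local to global for $f$ on $\GF^+$: justifying the multiplicative walk from $V$ to an arbitrary point of $\GF^+$ and invoking the local Cauchy equation across the possible shapes (open, closed, bounded, unbounded) of $\GF^+$ is standard but requires some care.
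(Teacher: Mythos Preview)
Your Part~1 and the overall Cauchy-equation strategy for Part~2 match the paper's proof. There is, however, a genuine gap in your exclusion of $\alpha=0$.

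You argue that $\alpha=0$ would give $\lnorm\cdot\rnorm\equiv b>0$, contradicting $\GF_0\neq\emptyset$. But this fails precisely in the paper's main examples, where $0\in\GF$ (e.g.\ $\GF=[0,\infty)$). In that case scale invariance forces $\lnorm 0\rnorm=0$, so with $\alpha=0$ one would have $\lnorm\mu\rnorm=b$ for $\mu\neq0$ and $\lnorm 0\rnorm=0$; then $\GF_0=\{0\}$ is nonempty and not all of $\GF$, continuity holds on $\GF\setminus\GF_0$, and your contradiction evaporates. The paper rules out $\alpha=0$ differently, by invoking the hemi-group additivity $\lnorm\xi\circ\nu\rnorm=\lnorm\xi\rnorm+\lnorm\nu\rnorm$: with $\lnorm\mu\rnorm=|\mu|^\alpha$ (taking $b=1$) one needs $|1_\GF\circ 1_\GF|^\alpha=2$, and $\alpha=0$ gives $1\neq 2$. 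Your argument never touches $\circ$, so it cannot see this obstruction.

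A related point: your uniqueness of $\alpha$ (``evaluate the formula at any $|\mu|\neq1$'') shows only that the representation $b|\mu|^\alpha$ of a \emph{given} entropy is unique. The paper proves the stronger statement that the underlying structure $(\GF,\circ,\cdot)$ together with $\lnorm 1_\GF\rnorm$ determines $\alpha$, via $(1+|\mu|^\alpha)^{1/\alpha}=|1_\GF\circ\mu|=(1+|\mu|^\beta)^{1/\beta}$ for two candidate entropies. This is what the downstream applications (e.g.\ Example~\ref{ex:stable.stat}) actually use.
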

 \begin{proof}

  \begin{enumerate}
%    \item This assertion is obvious.
%  \item Equality \eqref{eq:invariant} yields
%    $\lnorm 1_\GF \rnorm = \lnorm 1_\GF  \rnorm ^2 $.
%     The positivity of the entropy excludes $\lnorm 1_\GF \rnorm
%     =0$. Hence, $\lnorm 1_\GF \rnorm =1$.
  
 %   \item Lemma \ref{rem:rescaling} guarantees the existence of a   constant $c\ge0$.
\item
      $0 = \lnorm \mu \nu  \rnorm = c \lnorm \mu  \rnorm \lnorm \nu
      \rnorm$ implies that %  $c=0$,
      $\lnorm \mu  \rnorm =0$ or $\lnorm \nu
   \rnorm=0$.  %The positivity of the entropy yields $\mu=0$ or $\nu=0$. \item
\item
   Let $b=\lnorm 1_\GF  \rnorm$.
   Without loss of generality $b=1$ as $b$ cannot be $0$.
 %   Assume that   $\GF\cap(0,\infty)\not=\emptyset$.
   Let $A = \{ \log(x) : x\in  \GF \cap (0,\infty)\}$.
   The function $\ell:A\rightarrow\RR$, $ x\mapsto \log \lnorm e^x \rnorm$ is
   well defined on some nontrivial interval that includes $0$
   and is continuous there.
   Since $\ell$ obeys Cauchy's functional equation we get
   $\lnorm \mu \rnorm = \mu^\alpha$ for $\mu \in \GF \cap (0,\infty)$
   and some $\alpha\in\RR$. Now, assume that
   $\GF\cap(-\infty,0)\not=\emptyset$.
   Then, Cauchy's functional equation delivers that
   $\lnorm \mu \rnorm = |\mu|^\beta$ for $\mu \in \GF \cap (-\infty,0)$
   and some $\beta\in\RR$.
   For $\mu \in \GF \cap [-1,0)$
   we have  $|\mu|, \mu^2 \in \GF$, so that
   $\lnorm \mu\rnorm^2 = \lnorm \mu^2 \rnorm  = \lnorm |\mu|^2 \rnorm
   = \lnorm |\mu| \rnorm^2$. 
   Hence, $\alpha=\beta$.
   The entropy definition yields $|\mu \circ \nu|^\alpha = |\mu|^\alpha +
   |\nu|^\alpha$ with $\alpha\in\RR$.
   Assume $\alpha =0$. Then, %the continuity of the entropy yields
   $ |1_{\GF} \circ 1_{\GF}|^0 \not = 1 + 1$ in contradiction to
   the definition of an entropy.
   Now, assume that  $\lnorm \cdot \rnorm_\alpha: \mu \mapsto
   \mu^\alpha$ and  $\lnorm \cdot \rnorm_\beta:\mu 
   \mapsto \mu^\beta$ 
   are two scale invariant entropies with
   $\alpha,\beta\in\RR\setminus\{0\}$. Then, for all $\mu \in \GF\setminus\{0\}$,
   $$
   (1 + |\mu|^\alpha )^{1/\alpha} = |1_{\GF}  \circ \mu| = (1 + |\mu|^\beta )^{1/\beta}
  , $$
   so that $\alpha=\beta$.
   By a continuity argument, we get $\lnorm 0 \rnorm =0$, if $0\in \GF$.
  \end{enumerate}

\endproof\end{proof}

\section{Outlook: Entropy-driven statistical modelling}
\label{sec:stat.model}
 For consistency with many statistical approaches, we 
 restrict to Polish spaces from the beginning,
 although a topological space would be sufficient, at first.

\begin{definition}
  Let $(\Omega, \A)$ and $(\GX, \B)$  be
  Polish spaces.
%  Let $\X$ be a set of measurable mappings from $(\Omega, \A)$ into
 % $(\GX, \B)$ and $\GF $ a set of automorphisms on $\X$, i.e.,
  %$\xi : \X \rightarrow \X$, $X_1 \mapsto \xi X_1$ for $\xi \in \GF $.
  Let $(\bigtimes \Omega, \bigtimes \A, \bigtimes
  \PP)$
  be the product
  probability space  provided
  with the cylinder $\sigma$-algebra.
  %and a product measure  $\bigtimes_i \PP_i$.
  Let denote by
  $\pi_k$ the $k$-th canonical projection and let
  $Y=(X_1(\pi_1),X_2(\pi_2),\ldots) : \bigtimes \Omega \rightarrow
  \bigtimes \GX $ be measurable,
  i.e., $X_i : \Omega \rightarrow \GX $ are measurable maps.
  Then, $ Y_k$ and $Y_i $
  are called \emph{strictly independent} if and only if $k\not=i$.
\end{definition}

    Let $\X$ be a set of measurable mappings from $(\Omega, \A)$ into
    $\GX$.
In the following, the mapping
$\xi : \X \rightarrow \X$ shall 
be applied to the $k$-th component of a random vector
$X : \bigtimes \Omega \rightarrow \bigtimes \GX $, i.e.,
$$
(\xi, k) : \bigtimes \X \rightarrow \bigtimes \X,\qquad X \mapsto \tilde X
$$
with
$$
\tilde X_i(\pi_i) = \left\{
  \begin{array}{ll}
    X_i(\pi_i), & i\not= k\\
    (\xi X_k)(\pi_k) ,
         & i = k
  \end{array}
  \right.
.  $$
We will
 cease the underlying product space for ease, 
and write shortly $\xi X_k$, for instance.

\begin{definition}\label{def:stat.model}
  Let  $(\GF, \circ, \plus, \cdot,\lnorm .\rnorm)$
  be an entropy-driven hemi-ring with left entropy-invariant
  transformation $1\in \GF$.
  Let $(\GX, \plusG)$  be a Polish semigroup, $(\Omega, \A, \PP)$
  a probability space,
  $X_0 : \Omega \rightarrow \GX$ a measurable function,
  and $\PP_0$ its push forward measure.
  The ensemble $(\Omega, \GX, \GF )$
is called a
  \emph{entropy-driven statistical model},
  if the following conditions hold:
  \begin{enumerate}
  \item    
    Let $\X$ be a set of measurable mappings from $(\Omega, \A)$ into
    $\GX$.
    Let there be an isomorphism between $\GF$ and 
    a set of 
    maps on $\X$, so that we can identify  an element of
    $\GF$ by its corresponding map.
    We assume, that $\X = \{ \xi X_0 : \xi \in \GF\}$,
    and $1X =X$ for all $X \in \X$.
  
 \item For all $\xi \in \GF $, the random variable
    $\xi X_0$ is deterministic, if and only if 
    $\xi$  is a deterministic system.
  \item  For all $\xi,\nu\in\GF$, the coincidence of distributions
    $\xi X_0\sim \nu X_0$ implies
    $\lnorm \xi \rnorm = \lnorm \nu \rnorm $.
 
  \item
      Let  $Y_0\sim X_0$ 
    be  strictly independent of $X_0$.
      Then, for all $\xi,\nu \in\GF$, we shall have
    $$
    \xi X_0 \plusG \nu Y_0
    \sim (\xi \circ \nu) X_0
    .
    $$
   \end{enumerate}
   We  call the ensemble $\cal P$ of probability distributions generated by $\xi X_0$,
  $\xi \in \GF $, a \emph{stable set of distributions}.
%  We call $(\PP, \GX, \PP_0, \GF)$,  an  \emph{entropy-driven statistical model}.  
\end{definition}

 Note that the 
 entropy is here a  relative quantity with respect to
 the entropy of $X_0$
  and not an
  absolute one.
 An entropy-driven hemi-ring can have different
corresponding statistical models
as the following examples demonstrate.
This shows, as emphasized in \cite{mccullagh2002statistical}, that
the skeletal description is most important, and the fullfilling
family of distributions is secondary.

In the following examples, $\PP$ can be chosen to be $\PP_0$, so that
$X_0$ is the identity.

\begin{example}[Stable distributions]
  \label{ex:stable.stat}
  Let $[0,\infty)\subset \GF \subset\RR$ be provided with the standard topology
   and the standard operators $+$ and $\cdot$.
   %With respect to    generalizing the  PCA,
   Let the entropy be scale invariant, 
   i.e.,
     $\lnorm \xi \rnorm = |\xi|^\alpha$, $\xi \in \GF$,
   for some fixed $\alpha \in(0,2]$
   according to Proposition \ref{pprop:scale}.\ref{pprop:scale.2}.
   Then, $\GF_0 = \detsys = \{0\}$  is indeed the set of deterministic systems.
   Furthermore, $\GF_e = \{-1,1\} \cap \GF $
   %$\dettrafo  = \GF (= \GF_l)$,
   and the binary operator $\circ$ obeys the equality
   $|\xi \circ \nu| = (|\xi|^\alpha + |\nu|^\alpha)^{1/\alpha}$.
%   (or as $\xi \circ \nu = -(|\xi|^\alpha + |\nu|^\alpha)^{1/\alpha}$   in case $\GF =\RR$.)
  Let 
  $\xi : X \mapsto \xi X$ be the  multiplication
  of a random variable $X$ by a scalar $\xi\in\GF$, and 
  let $(\GX, \plusG) = (\RR, +)$.
  Let us consider the following two cases.
  See Example \ref{ex:realaxis} for the implications of the different
  choices for $\GF$.
  \begin{description}
  \item[$\GF= [0,\infty)$.]\   \\  
    An entropy driven statistical model is obtained by choosing
    $\PP_0$ as  a symmetric $\alpha$-stable distribution
    $S_\alpha(\sigma, 0, 0)$ with some fixed $\sigma > 0$.
    Note that the
     covariation norm  $\|X\|_\alpha:=\sigma$,
     see \cite{samorodnitskytaqqu},  is closely related to
     the entropy. Note also that the  Gaussian case is  included as
     $S_2(\sigma, 0, 0) = \mathcal{N}(0, 2 \sigma^2)$
     and the entropy is here the variance (up to factor).
     Clearly, another standard choice in the Gaussian case 
     is $\PP_0 = S_2(2^{-1/2}, 0, 0)$.
  \item[$\GF = \RR$.]\ \\
    For $\alpha \in(0,2]$, an entropy-driven statistical model
    is obtained by any symmetric $\alpha$-stable distribution
    $S_\alpha(|\sigma|, 0, 0)$ with $\sigma \in\RR\setminus\{0\}$.     
  \end{description} 
\end{example}

In the previous example, the different entropy driven statistical
models only differ in the chosen scale in $\PP_0$. In the next example,
also $(\GX,\plusG)$ changes.

  \begin{example}[Max-stable distributions] \label{ex:maxstable.stat}
    Let $\alpha > 0$ be fixed and
    $G=[0,\infty)$ be provided with the standard topology
    and the operators $\vee$ and $\cdot$.
    Most of the general comments in
    Example \ref{ex:stable.stat} apply, except that 
    $(\GX,\plusG)=([0,\infty), \vee)$ and $\PP_0 =
    \Phi_{\alpha,\lambda}$, where $\Phi_{\alpha,\lambda}$ equals
    the Fr\'echet distribution,
    $\Phi_{\alpha,\lambda}(x) = \exp( - \lambda^\alpha / x^\alpha) \1_{x >
      0}$
    with $\lambda > 0$.
     Another entropy-driven statistical model
     is given by the min-stable distributions,
     where $(\GX,\plusG)=((-\infty,0], \wedge)$ and $\PP_0 = \Psi_{\alpha,\lambda}$
     is a  Weibull distribution,
     $\Psi_{\alpha,\lambda}(x) = 1 \wedge \exp( - (- \lambda x)^\alpha)
      $.
    
\end{example}

\note{
All stochastic examples in the preceding sections
can be reformulated in terms of Definition \ref{def:stat.model}.
We sketch some of them.
In Example \ref{ex:basic}.1,
the set $\X$ is set of Gaussian random variable including the Dirac
measure
$\delta_0$;
$X_0$ can be any element of $\X$ except the Dirac measure;
$\lambda$ is a map $\lambda : \X \rightarrow \X, X \mapsto \lambda X$.
}

\section{Conclusions}
This article shows that a geometrical approach to the scalar product
through the Pytha\-gorean theorem
allows the unification of many notions in statistics and a deeper
reasoning for the choice of several risk functions.
We have introduced three different operators $\circ$, $\plus$ and
$\cdot$.
While $\circ$ is mainly needed for formal reasons only, the operator
$\plus$ is essential for any kind of modeling and is the most
difficult one to define in practical applications.
The counterparts in the statistical model also differ: $\circ$ is
related to equality in distribution, whereas $\plus$ is related to
equality almost surely. Similarly, $G_0$ is related to ``$0$ with
probability 1'',
whereas $\detsys$ refers to deterministic
objects.
 Scale invariance, defined through
the operator $\cdot$, is a rather strong property, which implies, in
simple cases, uniqueness of the entropy.
This result, however, leads already to a crucial progress in developing an
adequat PCA for $\alpha$-stable and max-stable distributions
\citep{schlatherreinbott21}.
We conclude by resuming some results from the generalized
hemi-group set-up, which might be unexpected:
\begin{itemize}
\item In the hemi-commutative law, three operators are
envolved, similar to the associative law; hence, a left commutative and a
right commutative law exist, cf.~Remark~\ref{rem:hemi}.\ref{rem:hemi.hemi}
and Equations \eqref{eq:sp.commut1} and \eqref{eq:sp.commut2}.
\item
  Both the Kullback-Leibler divergence and the mutual information are,
  at the same time, a scalar product and a hemi-metric.
\item
  Maximum likelihood has a clear Bayesian interpretation, but not the
  Tichonov regularization.
\item
  Bivariate Gaussian vectors as a whole can be, at the same time,
  fully dependent and uncorrelated.

\item
  Linearity of the scalar product is not needed to proof the
  Cauchy-Schwarz inequality.
  For a hemi-scalar product,
  the elementary property of linearity is reduced to the equality
  $$
  \langle  \xi \plus \eta , \nu  \rangle-  \langle  \eta, \nu \rangle =
  \langle  \xi , \eta \plus \nu  \rangle - \langle  \xi, \eta
  \rangle  
.   
  $$
\item
  The squared norm of  a pre-Hilbert space
  is defined through a non-trivial supremum of sums of scalar products
  (Example \ref{bsp:hilbert.2}).  

%\end{itemize} \bibliographystyle{plainnat} \bibliography{/home/schlather/tex/all,/home/schlather/tex/ms,bibliography}\end{document}
  
\item
  The Tsallis entropy $S_q$ \citep{tsallis88} is additive for all
  $q\in\RR$
  and not only for $q=1$, cf.~Example \ref{ex:tsallis}.

\item
  We make several material assumptions, in particular, we require the
  non-negativity of the entropy, of the hemi-metric, and of the
  hemi-scalar product for two identical arguments.
  As these axioms are along the same line, they look nice and
  promising.
  However, we do not claim
  that these axioms are compulsatory for any generalized approach.
  But we strongly suggest that the implicit choice of the underlying
  algebraic structure should enter more into the consciousness
  in any statistical modelling.
  
\item
  Finally, 
  a statistical model is not a set of distributions, but essentially
  a set of
  transformations with a certain algebraic structure.
  This point of view pursues those of 
  \cite{mccullagh2002statistical} and
  \cite{helland2004statistical}.
  \cite{schlatherreinbott21} show a range of potential
  applications of a such an algebraic definition of a statistical model,
  including linear models and PCA for exteme values.

\end{itemize}

% The squared norm of  a pre-Hilbert space
%  is defined through a non-trivial supremum of sums of scalar products
%  (Example \ref{bsp:hilbert.2}).  % which is non-trivial even in the case of a Hilbert space.

%\item
%  Scalar products are well-defined on half axes, but only loosely
 % defined 

\section*{Acknowledgment}
The author is grateful to
Christopher D\"orr,
Felix Reinbott,
and
 Wolfgang zu Castell % Special thanks to Wolfgang zu Castell!
for many hints and valuable discussions.
% ,
The author also thanks 
Carmen Ditscheid, % Sekante "a x + (1-a) y" minimiert unter allen Funktionen
Alexander Freudenberg, 
Tilmann Gneiting,
 Khadija Larabi,
 Peter Parczewski, % theormodynamic formalism
 Achim Schlather,
 and
%Martin Schmidt, % interpretatoin der zeitumkehr  bei addition
% (zeitinverser) dyn systeme, aehnlich Cauchy
Milan Stehlik.

% Li Chen

%\marginpar{send paper to above + Evgeny, Johanna, Sebastian Herzog,engelke, oesting, slowik}

%   The
%   authors gratefully acknowledge support by Deutsche
%      Forschungsgemeinschaft through the Research Training Groups RTG
%      1953 and RTG 2297.

  %% if your bibliography is in bibtex format, uncomment commands:
  \bibliographystyle{plainnat} % Style BST file (imsart-number.bst or imsart-nameyear.bst)
  \bibliography{/home/schlather/tex/all,/home/schlather/tex/ms,bibliography}       % Bibliography file (usually '*.bib')

\end{document}